\newtheorem{thm}{Theorem}[section]
\newtheorem{subthm}[thm]{Theorem}
\newtheorem{subprop}[thm]{Proposition}
\newtheorem{lemma}[thm]{Lemma}
\newtheorem{sublemma}[thm]{Lemma}
\newtheorem{cor}[thm]{Corollary}
\newtheorem{subcor}[thm]{Corollary}
\theoremstyle{definition}
\newtheorem{defn}[thm]{Definition}
\newtheorem{subdefn}[thm]{Definition}
\newtheorem{rem}[thm]{Remark}
\newtheorem{subrem}[thm]{Remark}
\newcommand\op{\operatorname}
\newcommand\Coh{\op{Coh}}
\newcommand\Coinv{\op{Coinv}\,}
\newcommand\diag{\op{diag}}
\newcommand\End{\op{End}\,}
\newcommand\ext{\op{ext}}
\newcommand\Ext{\op{Ext}}
\newcommand\Gen{\op{Gen}}
\newcommand\GL{\op{GL}}
\newcommand\GHilb{G\op{-Hilb}}
\newcommand\Ginv{G\op{-inv}}
\newcommand\Hom{\op{Hom}}
\newcommand\id{\op{id}}
\newcommand\image{\op{Im}}
\newcommand\Irr{\op{Irr}}
\newcommand\Ker{\op{Ker}\,}
\newcommand\LF{\op{LF}}
\newcommand\Lotimes{\overset{\bL}{\otimes}}
\newcommand\nat{\op{nat}}
\newcommand\otimesk{\otimes_k }
\newcommand\Qis{\op{Qis}}
\newcommand\red{\op{red}}
\newcommand\Rfl{\op{Rfl}}
\newcommand\Rhom{\bR Hom}
\newcommand\rank{\op{rank}}
\newcommand\Sing{\op{Sing}\,}
\newcommand\SL{\op{SL}}
\newcommand\Soc{\op{Soc}}
\newcommand\SSoc{\op{SSoc}}
\newcommand\Spec{\op{Spec}\,}
\newcommand\Supp{\op{Supp}\,}
\newcommand{\chom}{{\cH}om}
\newcommand{\cext}{{\cE}xt}
\newcommand{\wD}{\widetilde{D}}
\newcommand{\wV}{{\widetilde V}}
\newcommand{\bA}{{\mathbf A}}
\newcommand{\bC}{{\mathbf C}}
\newcommand{\bD}{{\mathbf D}}
\newcommand{\bG}{{\mathbf G}}
\newcommand{\bL}{{\mathbf L}}
\newcommand{\bM}{{\mathbf M}}
\newcommand{\bP}{{\mathbf P}}
\newcommand{\bR}{{\mathbf R}}
\newcommand{\bZ}{{\mathbf Z}}
\newcommand{\cA}{{\mathcal A}}
\newcommand{\cC}{{\mathcal C}}
\newcommand{\cE}{{\mathcal E}}
\newcommand{\cF}{{\mathcal F}}
\newcommand{\cG}{{\mathcal G}}
\newcommand{\cH}{{\mathcal H}}
\newcommand{\cI}{{\mathcal I}}
\newcommand{\cK}{{\mathcal K}}
\newcommand{\cM}{{\mathcal M}}
\newcommand{\cN}{{\mathcal N}}
\newcommand{\cO}{{\mathcal O}}
\newcommand{\cV}{{\mathcal V}}
\newcommand{\cW}{{\mathcal W}}
\newcommand{\cZ}{{\mathcal Z}}
\newcommand{\fm}{{\mathfrak m}}
\newcommand{\fn}{{\mathfrak n}}
\begin{document}
\title[McKay correspondence]
{Extended  McKay correspondence 
for quotient surface singularities}
\author{Akira Ishii}
\address{Division of Mathematical and Information Sciences,
Hiroshima University, 1-7-1 Kagamiyama,
Higashi-Hiroshima, 739-8521, Japan} 
\email{akira141@hiroshima-u.ac.jp}
\author{Iku Nakamura, \today}
\address{Department of Mathematics, 
Hokkaido University, Sapporo, 060-0810, Japan} 

\email{nakamura@math.sci.hokudai.ac.jp}
\thanks{Research was supported in part by the Grants-in-aid 
(15K04819, S-23224001) for Scientific Research, 
JSPS}
\thanks{2000 {\it Mathematics Subject Classification}.
Primary 14J10, 14J17;Secondary 14D20.}
\thanks{{\it Key words and phrases}. Moduli, Hilbert scheme, 
McKay correspondence}
\date{\today}
%
\begin{abstract}
Let $G$ be a finite subgroup of $\GL(2)$ 
acting on $\bA^2\setminus\{0\}$ freely. 
The $G$-orbit Hilbert scheme
$\GHilb(\bA^2)$ is a minimal resolution 
of the quotient $\bA^2/G$ by \cite{Ishii02}. 
We determine the generator sheaf 
of the ideal defining the universal 
$G$-cluster over $\GHilb(\bA^2)$, 
which somewhat 
strengthens 
the well-known McKay correspondence 
for a finite subgroup of $\SL(2)$. 
We also study the quiver structure of $\GHilb(\bA^2)$ 
at every $G$-cluster $O_{Z_y}=O_{\bA^2}/I_y$ in terms of      
a collection of sort of minimal $G$-submodules of $O_{Z_y}$ 
(called mono-special $O_{\bA^2}$-submodules) and 
generating $G$-submodules of $I_y$.  
\end{abstract}
\maketitle

\section{Introduction.}
\label{sec:Introduction} 
For an algebraically closed field $k$, let 
$G$ be any finite small subgroup of $\GL(2,k)$, that is, 
a finite subgroup with no pseudo-reflections, 
or equivalently, a finite subgroup of $\GL(2,k)$ 
acting on $\bA^2_k$ 
 with the origin the unique fixed point of it. 
Throughout this article 
we assume that the characteristic of $k$ 
is prime to the order $|G|$ of $G$.\par
The $G$-orbit Hilbert scheme $\GHilb(\bA_k^2)$ 
is the scheme parameterizing all the $G$-invariant 
zero-dimensional subschemes of $\bA^2_k$ of length $|G|$, each with 
structure sheaf isomorphic 
to the group algebra $k[G]$ of $G$ as a $k[G]$-module.
 It is by \cite{Ishii02} 
the minimal resolution of the quotient $\bA_k^2/G$.

For any geometric point $y$ in the exceptional set of the resolution, 
let  $\Gen(I_y)$ be 
the minimal $k[G]$-(sub)module generating the ideal $I_y \subset O_{\bA^2_k}$ corresponding to $y$. 
One of the purposes of this article is to prove the following:
\begin{thm}
\label{thm:thm}
Let $G$ be a finite small subgroup of $\GL(2,k)$, 
and  $E$ the exceptional set of the minimal resolution 
$\GHilb(\bA_k^2)$ of $\bA_k^2/G$. Then 
the generator sheaf 
of the ideal defining the universal 
$G$-cluster, 
that is, the union of all 
$\Gen(I_y)$ over $E$ is an $O_F$-module $G$-isomorphic to 
$$
(\bigoplus_{\rho\neq \rho_0}O_{E(\rho)}(-1)
\otimesk \rho)\bigoplus O_F(-F)\otimesk \rho_0
$$where 
 $\rho$ ranges over the set of all non-trivial irreducible representations 
of $G$, special in the sense of Definition~\ref{subdefn:special repres}, $E(\rho)$ is an irreducible component of $E$ associated to $\rho$ and $F$ is 
the fundamental divisor of $E=F_{\red}$.
\end{thm}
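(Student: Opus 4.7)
\emph{Plan.} Let $\cI \subset O_{\GHilb(\bA^2) \times \bA^2}$ denote the ideal sheaf of the universal $G$-cluster, and let $\cN$ be the pullback of the maximal ideal at $0 \in \bA^2$ along the projection to the second factor. Over the exceptional set $E$ the fibres of the cluster are supported set-theoretically at the origin, so the generator sheaf is naturally realised as $\cG := \cI/\cN\cI$, a coherent $O_E$-module carrying a $G$-action; by Nakayama its stalk at $y$ is $\Gen(I_y)$. Since the characteristic of $k$ is prime to $|G|$, Maschke's theorem splits $\cG = \bigoplus_\rho \cG_\rho \otimes_k \rho$ over the irreducible representations of $G$, and it suffices to identify each $O_E$-module $\cG_\rho$ with the summand asserted in the theorem.

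For each non-trivial special representation $\rho$, I would prove $\cG_\rho \cong O_{E(\rho)}(-1)$ in two stages. First, determine the support: at a generic $y \in E(\rho)$ the minimal generating module $\Gen(I_y)$ contains $\rho$ with multiplicity exactly one, while at generic points of other components it has $\rho$-multiplicity zero. This follows from the explicit (near-)toric description of $\GHilb(\bA^2)$ along the smooth locus of each component from \cite{Ishii02}, combined with the $\GL(2)$-McKay bijection between special representations and irreducible components of $E$. Consequently $\cG_\rho$ is a line bundle on $E(\rho) \cong \bP^1$, say $O(d_\rho)$, supported on a single component. To pin down $d_\rho = -1$ I would use the tautological bundles $R_\rho$ on $\GHilb(\bA^2)$ (characterised by a decomposition of $\pi_* O_\cZ$ into isotypic pieces) together with the normalisation $c_1(R_\rho) \cdot E(\rho') = \delta_{\rho \rho'}$; a direct chart-by-chart toric calculation exhibits the $\rho$-generator as a section whose transition cocycle on $E(\rho)$ yields degree $-1$.

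The trivial representation demands a separate argument. Generically on $E$ one has $(\cG_{\rho_0})_y = 0$, because any $G$-invariant in $I_y$ factors as a product of non-trivial generators there. The support of $\cG_{\rho_0}$ is therefore the locus where this factorisation fails, which I expect to coincide with the fundamental divisor $F$. To prove $\cG_{\rho_0} \cong O_F(-F)$ I would construct a canonical $G$-invariant section --- essentially the image in $\cG_{\rho_0}$ of an $O_{\bA^2/G}$-generator of $\cN^G$ --- and show its zero divisor on $E$ equals $F$ via Artin's characterisation of $F$ as the minimal positive cycle with $F \cdot E_i \leq 0$ for every component $E_i$; the twist $-F$ then reads off the conormal data of $F$ in $\GHilb(\bA^2)$.

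The main obstacle will be this $\rho_0$ piece. For non-trivial special $\rho$, the structure is tightly controlled by the McKay bijection and local toric data on the resolution. By contrast, the trivial component of $\cG$ requires a genuinely global argument that simultaneously accounts for all components of $E$ and the precise multiplicities $m_i$ of $F = \sum m_i E_i$. Matching these $m_i$ to the orders of vanishing of the universal invariant generator along each $E_i$ is the heart of the proof, and will probably demand an explicit description of $\GHilb(\bA^2)$ in terms of the resolution data together with Artin's theory of fundamental cycles.
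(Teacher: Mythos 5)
Your decomposition of the generator sheaf into isotypic pieces and the target answer are right, but the proof strategy has two genuine gaps. First, for the non-trivial special $\rho$ you lean on an ``explicit (near-)toric description of $\GHilb(\bA^2)$'' and a ``chart-by-chart toric calculation'' to pin down the degree $-1$. No such description exists for general small $G\subset\GL(2,k)$ --- $\GHilb(\bA^2)$ is toric only when $G$ is abelian --- and \cite{Ishii02}, which you cite as the source, works entirely through derived categories precisely because no toric charts are available. Even granting the support statement, the normalisation $c_1(\cM_\rho)\cdot E(\rho')=\delta_{\rho\rho'}$ concerns the tautological bundles $\cM_\rho=[p_*O_{\cZ}\otimesk\rho^*]^G$, not the generator sheaf $\cI/\fm\cI$, and does not by itself yield the degree of $\cG_\rho$ on $E(\rho)$ or rule out embedded torsion at the intersection points $E(\rho)\cap E(\rho')$. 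Second, for the $\rho_0$ piece you explicitly defer the identification of the vanishing divisor of the invariant generator with $F$ (multiplicities included) to a future ``explicit description''; but that identification \emph{is} the theorem for $\rho_0$, and Artin's minimality characterisation of $F$ alone does not produce it --- you would still need to know that $O_{\cZ}\otimes_{O_{Y\times U}}(O_U/\fm)\cong O_F$, which is a nontrivial computation.

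The paper sidesteps both difficulties with a single device you do not use: the integral functor $\Psi_J(A)=[p_*(\bL\pi_U^*A\Lotimes J)]^G$ applied to the short exact sequence $0\to\cI\to O_{Y\times_k U}\to O_{\cZ}\to 0$. The values $\Psi_{O_{\cZ}}(O_0\otimesk\rho^*)$ are already known from the derived McKay correspondence of \cite{Ishii02} (namely $O_{E(\rho)}(-1)[1]$ for non-trivial special $\rho$, $O_F$ for $\rho_0$, and $0$ otherwise), $\Psi_{O_{Y\times_k U}}(O_0\otimesk\rho^*)$ is trivially $O_Y$ or $0$, and the long exact sequence then forces $\Psi_{\cI}(O_0\otimesk\rho^*)$ to be $O_{E(\rho)}(-1)$, $O_Y(-F)$, or $0$ --- concentrated in degree zero, so equal to the $\rho$-isotypic piece of $\cI/\fm\cI$ as an honest sheaf. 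Both twists ($-1$ and $-F$) and the absence of extra torsion fall out automatically. If you want to salvage your outline, the missing input is exactly Lemma~\ref{sublemma:Psi O0rhoi*}; without importing it (or reproving it), neither the degree computation nor the $\rho_0$ divisor can be completed by the methods you describe.
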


See Theorem~\ref{thm:mckay isom} and  Corollary~\ref{cor:global local isom}.
Our main ingredient for the proof 
is the derived category method adopted by 
\cite{BKR01} and \cite{Ishii02}. See also \cite{KapranovVasserot}.

The article also aims at studying 
the special McKay quiver associated with $\GHilb(\bA_k^2)$  
in terms of its    
``mono-special $O_U$-submodules'' of $O_{Z_y}$ (see Definition \ref{def:mono-special})
and generators of $I_y$
at every $G$-cluster $O_{Z_y}=O_U/I_y$.

Here is an outline of the article. 
In Section~\ref{sec:reflexive and full}, 
we recall reflexive modules and full modules. 
In Section~\ref{sec:global McKay}, we review 
local McKay correspondence 
for two-dimensional quotient singularities 
from \cite{Ishii02}. 
{Then} 
we formulate and prove {its global version, global in the sense that it describes a family of the local versions over the exceptional set}
(Theorem~\ref{thm:thm}). 
In Section~\ref{sec:extensions and cup}, we discuss 
the connection between tensor product with the natural representation,  
extensions of the sheaf $O_{Z_y}$, and cup products 
of $\Ext$ groups. As an application, we prove  
Theorem~\ref{thm:mckay tensor prod}. \par
 \par 
In Section~\ref{sec:semi-orth proj}, we study 
the semi-orthogonal projection of $O_0\otimesk \rho$ 
{to the essential image of $D^b(\Coh \GHilb(\bA^2))$}
for every special representation of $G$. 
For every quotient surface 
singularity arising from a subgroup of $\GL(2)$, 
we modify the notion of socles 
of $O_{Z_y}$ so as 
to make more coherent to analysis of exceptional sets.
This is done in Theorem~\ref{thm:socle-special}, where 
mono-special $O_U$-submodules of $O_{Z_y}$ are introduced. \par
In Section~\ref{sec:reconst alg and deformations}, we recall from \cite{Craw12} and \cite{Wemyss11} the special McKay quiver,  which describes the reconstruction algebra of the singularity $X$.
We then generalize 
Theorem~\ref{thm:mckay tensor prod} to every (small) finite subgroup of 
$\GL(2,k)$.  This describes a precise connection 
between a mono-special $O_U$-submodule $V(\sigma)$ of $O_{Z_y}$ 
and a generator $G$-submodule 
$W(\rho)\subset \Gen(I_y)$ for every individual $G$-cluster $Z_y$ 
and every pair $(\sigma,\rho)$ 
of special representations. 
Finally we explain all of the above for two examples, 
a quotient singularity arising from a 
cyclic group $\langle \frac{1}{12}(1,5) \rangle$ 
of order 12 and  
the binary dihedral case $D_5$.   \par
 We are very grateful 
to  John McKay for his encouragement 
and interest in the subject. We are very grateful 
also to Alastair Craw for his encouragement and 
suggestion of better terminology.

\section{Reflexive modules and torsion free pullbacks}
\label{sec:reflexive and full}
\subsection{Derived category}
\begin{subdefn}
Let $X$ be a scheme over a fixed field $k$.
Let $Coh(X)$ be the category of coherent $O_X$-modules and 
$K(Coh(X))$ the category consisting of the bounded complexes of 
objects and morphisms in $Coh(X)$. 

We define a morphism $f:P^{\bullet}\to Q^{\bullet}$ in $K(Coh(X))$ to be 
{\em a quasi-isomorphism} if $f$ induces an isomorphism on cohomology. \par
The derived category ${D(X)=}D(Coh(X))$ is the localization of $K(Coh(X))$ 
at $\Qis$ (the monoid of quasi-isomorphisms.), that is, it is the category 
$K(Coh(X))$ modulo equivalence defined by $\Qis$. See 
\cite[Def., pp.~49--50]{Hartshorne66}. 
Similarly we define $Coh_c(X)$ 
to be the category of coherent $O_X$-modules
with complete supports, $K(Coh_c(X))$ the category consisting of bounded complexes of objects and morphisms in $Coh_c(X)$, and  
$D_c(X)$ the derived category of $K(Coh_c(X))$. 
\end{subdefn}

\subsection{Reflexive $O_X$-modules and full $O_Y$-modules}
This subsection is taken from \cite{AV85}, \cite{EK85}, and 
\cite{Wunram88}. 
Let $Z$ be a scheme of finite type over $k$, 
$\cF$ a coherent $O_Z$-module on $Z$. 
Then $\cF$ is defined to be a {\it reflexive} $O_Z$-module 
iff $\cF^{\vee\vee}\simeq \cF$, 
where $\cF^{\vee}=\chom_{O_Z}(\cF,O_Z)$. 

\begin{sublemma}
\label{sublemma:reflexive}Let $Z$ be a normal surface, 
$Z'=Z\setminus\Sing(Z)$ and $i:Z'\hookrightarrow Z$ 
the inclusion. 
The following is true.
\begin{enumerate}
\item  
any torsion free module over a discrete valuation ring is free, hence, 
any torsion free sheaf on a  nonsingular curve is locally free. 
\item 
if $Z$ is nonsingular, any reflexive $O_Z$-module 
is locally free,
\item if $\cF$ is 
a reflexive $O_Z$-module, then  
$i_*(i^*\cF)=\cF$, and it is uniquely determined 
by its restriction to $Z'$, 
\item if $\cF$ is a locally free $O_{Z'}$-module, 
then $i_*(\cF)$ is a 
reflexive $O_Z$-module,
\item $\cG^{\vee}$ is reflexive
for any finite $O_Z$-module $\cG$.
\end{enumerate}
\end{sublemma}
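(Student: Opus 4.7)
The overall plan is to dispatch each of the five items in turn, with the common thread being Serre's condition $S_2$ that reflexive modules enjoy on a normal scheme.

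Part (1) is the structure theorem for finitely generated modules over a PID: a DVR is a PID, so torsion free finitely generated modules over it are free, and one localizes stalkwise to obtain the sheaf statement on a nonsingular curve. For part (2), I would use Auslander--Buchsbaum on a regular local ring $R$ of dimension $2$: any reflexive $R$-module $M$ is itself a dual module $\Hom(M^{\vee},R)$, and any such dual sits as the kernel of a map $R^b \to R^a$ obtained by dualizing a finite presentation, so by the depth lemma satisfies $\op{depth}(M) \geq \min(\op{depth}(R),2) = 2$. Hence $\op{pd}(M) = \op{depth}(R) - \op{depth}(M) = 0$ and $M$ is free.

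I would next handle (5) and (3) in that order, since (4) depends on both. For (5), take a finite presentation $O_Z^a \to O_Z^b \to \cG \to 0$ and apply $\Hom(-, O_Z)$ to obtain $0 \to \cG^{\vee} \to O_Z^b \to O_Z^a$; then $\cG^{\vee}$ is the kernel of a morphism between locally free sheaves on the normal surface $Z$, hence torsion free and stalkwise $S_2$ by the depth lemma, and on a normal surface $S_2$ for a torsion free coherent sheaf is equivalent to reflexivity. For (3), since $Z$ is normal of dimension two, $Z \setminus Z'$ has codimension $\geq 2$; the adjunction morphism $\cF \to i_*(i^*\cF)$ has kernel and cokernel supported on $\Sing(Z)$, and the $S_2$ property of reflexive $\cF$ forces both to vanish, giving $\cF \cong i_*(i^*\cF)$ and the claimed uniqueness.

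Finally, for (4) I would pick any coherent extension $\cG$ of $\cF$ across $\Sing(Z)$; by (5) applied to $\cG^{\vee}$, the sheaf $\cG^{\vee\vee}$ is reflexive, and its restriction to $Z'$ recovers $\cF^{\vee\vee} = \cF$. Applying (3) to $\cG^{\vee\vee}$ then identifies $i_*\cF$ with $\cG^{\vee\vee}$, which is therefore reflexive and, in particular, coherent. The main potential obstacle is to align the local $S_2$/reflexive equivalence with the sheaf-theoretic statements (3)--(5) without circularity; a secondary point is the coherence of $i_*\cF$ in (4), which the route via $\cG^{\vee\vee}$ handles automatically. Both issues are classical and treated in the references cited preceding the statement.
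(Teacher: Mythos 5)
Your proposal is correct. For parts (1)--(3) and (5) the paper gives no argument at all --- it simply cites Hartshorne's \emph{Stable reflexive sheaves} (Corollary 1.4 for (1),(2), Proposition 1.6 for (3), Corollary 1.2 for (5)) --- and the proofs you supply (PID structure theorem; Auslander--Buchsbaum plus the depth lemma exhibiting a reflexive module as a second syzygy; the $S_2$ property killing the kernel and cokernel of $\cF\to i_*i^*\cF$ over the codimension-two locus $\Sing(Z)$; dualizing a finite presentation) are exactly the standard arguments behind those citations, so there is nothing to compare there. The one place the paper argues on its own is part (4), and there your route genuinely differs: the paper sets $\cG=i_*(\cF)$ directly and deduces $\cG\simeq i_*(i^*\cG)\simeq i_*(i^*(\cG^{\vee\vee}))\supset \cG^{\vee\vee}$, which combined with the inclusion $\cG\subset\cG^{\vee\vee}$ for torsion-free sheaves gives reflexivity --- but this tacitly assumes $i_*(\cF)$ is already known to be coherent. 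You instead choose a coherent extension $\cG$ of $\cF$ across $\Sing(Z)$, invoke (5) to see $\cG^{\vee\vee}$ is reflexive, and use (3) to identify it with $i_*(\cF)$; this buys you the coherence of $i_*(\cF)$ for free and avoids the implicit step in the paper's one-line argument, at the small cost of invoking the existence of coherent extensions over an open immersion. Both arguments are valid.
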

\begin{proof}
See \cite[Corollary 1.4]{Hartshorne80} for the parts (1) and (2). 
See \cite[Proposition~1.6]{Hartshorne80} for the part (3). 
See \cite[Corollary 1.2]{Hartshorne80} for the part (5). 
The part (4) is proved as follows. Let $\cG=i_*(\cF)$.  
Since $\cF=i^*(\cG)$ is locally free on $Z$, we have 
$\cG\simeq i_*(i^*\cG)\simeq i_*(i^*(\cG^{\vee\vee}))\supset \cG^{\vee\vee}$, 
from which (4) follows.
\end{proof}

{The following summarizes \cite[Lemma 2,1, 2.2]{E85}}.
\begin{sublemma}
\label{sublemma:basics}
Let $X$ be an affine normal surface with rational singularities 
and $f:Y\to X$ the minimal resolution of $X$. 
\begin{enumerate}
\item Let $M$ be a reflexive $O_X$-module, $\cM$ 
the torsion free pullback of $M$ to $Y$, that is, 
$\cM=f^*M/O_Y\text{-torsions}$. Then {$f_*(\cM)=M$, and 
\begin{enumerate}
\item[(i)] 
 $\cM$ is locally free, 
\item[(ii)] $\cM$ is 
 generated by global sections, 
\item[(iii)] $R^1f_*(\cM^{\vee}\otimes_{O_Y} \omega_Y)=0$.
\end{enumerate}}
\item Conversely if a coherent $O_Y$-module $\cM$ satisfies 
{(i) and (iii),} 
then $M:=f_*(\cM)$ is a reflexive $O_X$-module. 
\end{enumerate}
\end{sublemma}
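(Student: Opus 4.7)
The plan is to leverage the isomorphism $f|_{Y'}:Y'\simeq X'$ between the smooth loci to control $\cM$ and $M$ away from the exceptional set, and then to import the single nontrivial cohomological input — the vanishing in (iii) — by combining relative Grothendieck--Serre duality with the rational singularity identity $Rf_*\omega_Y=\omega_X$ (equivalently $R^1f_*O_Y=0$). Throughout, write $X':=X\setminus\Sing X$ and $Y':=f^{-1}(X')$, with open inclusions $i:Y'\hookrightarrow Y$ and $j:X'\hookrightarrow X$.

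For part~(1), the reflexive module $M$ restricts to a locally free $M|_{X'}$ by Sublemma~\ref{sublemma:reflexive}(2), so $\cM$ agrees on $Y'$ with the locally free sheaf $f^*M|_{Y'}$. Statement (i) is then an Esnault-type promotion of this local freeness across the exceptional locus: on the smooth surface $Y$ one identifies $\cM$ with its reflexive hull by combining the torsion-free-quotient construction with freeness of reflexive modules over two-dimensional regular local rings (Sublemma~\ref{sublemma:reflexive}(2)), the cokernel $\cM^{\vee\vee}/\cM$ being supported in codimension $\geq 2$ and forced to vanish by the universal property of $f^*M/\text{torsion}$. Granted (i), the equality $f_*\cM=M$ follows from Sublemma~\ref{sublemma:reflexive}(3):
\[
M=j_*(M|_{X'})=j_*\bigl((f|_{Y'})_*(f^*M|_{Y'})\bigr)=f_*(i_*(\cM|_{Y'}))=f_*\cM.
\]
Statement (ii) is immediate since $X$ is affine: $M$ is globally generated, hence so is $f^*M$, and hence so is its quotient $\cM$. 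For (iii), relative Grothendieck--Serre duality gives
\[
Rf_*(\cM^{\vee}\otimes\omega_Y) \;\simeq\; R\chom_{O_X}(Rf_*\cM,\,\omega_X),
\]
where local freeness of $\cM$ lets us pull $\cM^{\vee}$ out of $R\chom$; rationality of the singularities is what makes $\omega_X$ the correct dualizing object on the right via $Rf_*\omega_Y=\omega_X$. Reading off the degree-one piece and using depth$(M)\geq 2$ at the singular points (i.e.\ reflexivity of $f_*\cM=M$) forces $R^1f_*(\cM^{\vee}\otimes\omega_Y)=0$.

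For part~(2), set $M:=f_*\cM$. The isomorphism of smooth loci together with local freeness of $\cM$ makes $M|_{X'}$ locally free, hence reflexive, so $j_*(M|_{X'})$ is a reflexive $O_X$-module by Sublemma~\ref{sublemma:reflexive}(4). It therefore suffices to show $M=j_*(M|_{X'})$; the discrepancy is a finite length sheaf supported on $\Sing X$, and the same duality identity expresses it as a dual of $R^1f_*(\cM^{\vee}\otimes\omega_Y)$, which vanishes by hypothesis~(iii). The main obstacle in either direction is the careful bookkeeping of the duality isomorphism combined with the rational singularity identity $Rf_*\omega_Y=\omega_X$; once this is in place, both directions reduce to the smooth-locus identification and general properties of reflexive sheaves, while the verification that the torsion-free pullback is already reflexive is the standard technical step of the Esnault--Wunram setup.
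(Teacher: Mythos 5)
The paper offers no proof of this lemma at all: it is stated as a summary of \cite[Lemmas 2.1, 2.2]{E85}, so the only real question is whether your reconstruction of Esnault's argument is sound. Much of it is. The reduction to the smooth loci, global generation of $\cM$ from affineness of $X$, the computation $f_*\cM=j_*(M|_{X'})=M$, and the use of Grothendieck duality $\bR f_*(\cM^{\vee}\otimes\omega_Y)\simeq \bR\chom_{O_X}(\bR f_*\cM,\omega_X)$ combined with local duality (reflexive $=$ depth $2$ $=$ MCM on a normal surface, so $\cext^1_{O_X}(M,\omega_X)=0$) is exactly the standard route, and part (2) does go through on this basis since (iii) forces $\cext^1_{O_X}(f_*\cM,\omega_X)=0$, hence $H^1_{\{x\}}(f_*\cM)=0$, hence $f_*\cM=j_*\bigl((f_*\cM)|_{X'}\bigr)$ is reflexive.

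The genuine gap is in (1)(i), the local freeness of $\cM=f^*M/\text{torsion}$. You assert that the finite-length cokernel $Q=\cM^{\vee\vee}/\cM$ is ``forced to vanish by the universal property of $f^*M/\text{torsion}$.'' It is not: the universal property only produces the canonical map $\cM\to\cM^{\vee\vee}$ and gives no surjectivity, and a torsion-free quotient on a smooth surface is in general strictly smaller than its reflexive hull. This is precisely where rationality enters Esnault's proof: from $O_Y^n\twoheadrightarrow\cM$ and $R^1f_*O_Y=0$ one gets $R^1f_*\cM=0$; combining this with $f_*\cM=f_*(\cM^{\vee\vee})=M$ (both computed through $j_*(M|_{X'})$, which needs only torsion-freeness of the pushforwards, not (i)), the exact sequence $0\to f_*\cM\to f_*(\cM^{\vee\vee})\to f_*Q\to R^1f_*\cM=0$ forces $f_*Q=0$ and hence $Q=0$ because $Q$ has finite length. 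Without some such argument your (i) is unproved, and since your derivations of $f_*\cM=M$ and of (iii) both presuppose (i), all of part (1) rests on it. A smaller omission in the same vein: to read off $R^1f_*(\cM^{\vee}\otimes\omega_Y)=\cext^1_{O_X}(M,\omega_X)$ from the duality isomorphism you need $\bR f_*\cM$ concentrated in degree $0$, i.e.\ again $R^1f_*\cM=0$; otherwise the degree-one cohomology of the right-hand side acquires an extra term $\cext^2_{O_X}(R^1f_*\cM,\omega_X)$. This is harmless once $R^1f_*\cM=0$ is established from (ii) and rationality, but it must be said.
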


\begin{subdefn}
We call an $O_Y$-module $\cM$ 
a {\it full} $O_Y$-module (or simply a full sheaf following \cite{E85}) 
if the conditions {(i)--(iii)} are satisfied. \end{subdefn}

The following is a corollary of Lemma~\ref{sublemma:basics} (and 
Lemma~\ref{sublemma:reflexive}):
\begin{subcor}
\label{subcor:reflexive and full}
Under the same notation as in Lemma~\ref{sublemma:basics}, 
there is a bijective correspondence between the following sets
\begin{enumerate}
\item[(i)] the set of (indecomposable) reflexive $O_X$-modules $M$, 
\item[(ii)] the set of (indecomposable) full $O_Y$-modules $\cM$. 
\end{enumerate}
\end{subcor}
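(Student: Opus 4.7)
The plan is to exhibit mutually quasi-inverse assignments $\Phi(M) := f^*M/(O_Y\text{-torsion})$ and $\Psi(\cM) := f_*\cM$ between the two sides, which will realize the claimed bijection on isomorphism classes. Lemma~\ref{sublemma:basics} supplies three of the four required statements: part (1) gives that $\Phi(M)$ is full and that $\Psi(\Phi(M)) = M$, while part (2) gives that $\Psi(\cM)$ is reflexive for every full $\cM$. So the only nontrivial verification remaining is the natural isomorphism $\Phi \circ \Psi \cong \id$, after which the indecomposability clause will follow automatically.

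To establish $\Phi \circ \Psi \cong \id$, I would fix a full $\cM$, set $M := f_*\cM$, and analyze the adjunction morphism $f^*M \to \cM$. Quotienting its source by $O_Y$-torsion yields a natural comparison map $\alpha : f^*M/\text{torsion} \to \cM$, which I want to show is an isomorphism. Surjectivity should come at once from fullness condition (ii): since $\cM$ is generated by $H^0(Y, \cM) = H^0(X, M)$ and those sections factor through $f^*M$, the composite $H^0(X, M) \otimesk O_Y \to f^*M \to \cM$ is surjective, and so is $\alpha$. For injectivity, let $K := \Ker \alpha$. By Lemma~\ref{sublemma:basics}(1)(i) the sheaf $f^*M/\text{torsion}$ is locally free, so its subsheaf $K$ is torsion-free; on the other hand $f$ is an isomorphism over $X \setminus \Sing(X)$, hence $\alpha$ is an isomorphism on $Y \setminus E$, forcing $K$ to be supported in $E$. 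A torsion-free coherent sheaf on the irreducible surface $Y$ that vanishes at the generic point must itself vanish, so $K = 0$ and $\alpha$ is an isomorphism.

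With $\Phi$ and $\Psi$ identified as mutual quasi-inverses, the indecomposability clause is automatic: both functors commute with finite direct sums, so an indecomposable on either side corresponds to an indecomposable on the other. The main obstacle, such as it is, lies in the injectivity step for $\alpha$: the key tension is between $K$ being torsion-free (which requires the local freeness of $f^*M/\text{torsion}$) and being supported in the codimension-one exceptional set $E$, and reconciling these via the generic-point argument is where the substantive content of the correspondence actually gets used.
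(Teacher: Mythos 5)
Your proposal is correct and follows the route the paper intends: the paper states this as an immediate corollary of Lemma~\ref{sublemma:basics} (whose part (1) gives $f_*(f^*M/\text{torsion})=M$ and fullness, and whose part (2) gives reflexivity of $f_*\cM$), and your verification that the counit $f^*f_*\cM\to\cM$ induces an isomorphism $f^*M/\text{torsion}\xrightarrow{\ \sim\ }\cM$ --- surjectivity from global generation, injectivity because the kernel is a torsion-free subsheaf of a locally free sheaf supported on the exceptional set --- is exactly the detail the paper leaves implicit (cf.\ its Corollary~\ref{subcor:uniqueness of a full sheaf}). No gaps.
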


\begin{subcor}\label{subcor:uniqueness of a full sheaf}
Under the same notation as in Lemma~\ref{sublemma:basics},
every full $O_Y$-module $\cM$ is determined 
by its restriction to $Y\setminus f^{-1}(\Sing(X))$.
\end{subcor}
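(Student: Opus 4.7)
The plan is to chain together results already established: the bijection between reflexive $O_X$-modules and full $O_Y$-modules (Corollary~\ref{subcor:reflexive and full}), and the fact that reflexive modules on a normal surface are determined by their restriction to the smooth locus (Lemma~\ref{sublemma:reflexive}(3)).

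Concretely, I would set $U = X \setminus \Sing(X)$ and $V = f^{-1}(U) = Y \setminus f^{-1}(\Sing(X))$. Since $f$ is a resolution and $X$ has only isolated singularities, $f|_V : V \to U$ is an isomorphism. Suppose $\cM_1$ and $\cM_2$ are full $O_Y$-modules with $\cM_1|_V \simeq \cM_2|_V$. By Lemma~\ref{sublemma:basics}(2), the pushforwards $M_j := f_*\cM_j$ are reflexive $O_X$-modules. Pushing the given isomorphism along $f|_V$ yields $M_1|_U \simeq M_2|_U$.

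Next I would invoke Lemma~\ref{sublemma:reflexive}(3): since $M_1$ and $M_2$ are reflexive on the normal surface $X$ and agree on the smooth locus $U$, they are globally isomorphic, $M_1 \simeq M_2$. Finally, the bijective correspondence of Corollary~\ref{subcor:reflexive and full} sends the reflexive module $M_j$ back to the full module $\cM_j$ (as the torsion-free pullback), so $M_1 \simeq M_2$ forces $\cM_1 \simeq \cM_2$.

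There is no real obstacle: the statement is essentially a transport of Lemma~\ref{sublemma:reflexive}(3) across the equivalence established in Corollary~\ref{subcor:reflexive and full}. The only point one should be careful about is that the isomorphism $\cM_1|_V \simeq \cM_2|_V$ genuinely induces $M_1|_U \simeq M_2|_U$, which is immediate because $f|_V$ is an isomorphism of schemes and pushforward along an isomorphism is an equivalence of module categories.
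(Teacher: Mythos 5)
Your proof is correct and follows essentially the same route as the paper: the paper's one-line proof (``Clear from Lemma~\ref{sublemma:reflexive}~(3)'') implicitly means exactly what you spell out, namely transporting the statement that reflexive $O_X$-modules are determined on $X\setminus\Sing(X)$ across the reflexive/full correspondence of Corollary~\ref{subcor:reflexive and full}, using that $f$ is an isomorphism over the smooth locus. Your write-up simply makes the intermediate steps explicit.
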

\begin{proof}
Clear from Lemma~\ref{sublemma:reflexive}~(3).
\end{proof}

\begin{subcor}
\label{cor:KY is full}
The invertible sheaf $K_Y$ is 
a full sheaf.  
\end{subcor}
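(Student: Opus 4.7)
The plan is to verify the three defining conditions of a full sheaf for $\cM := K_Y$ in turn. Condition (i), local freeness, is immediate since $K_Y$ is an invertible sheaf on the smooth surface $Y$. For condition (iii), the canonical isomorphism $K_Y^{\vee}\otimes_{O_Y}\omega_Y \cong O_Y$ reduces the required vanishing to $R^1 f_* O_Y = 0$, which is exactly the defining property of a rational singularity.

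Condition (ii), that $K_Y$ is generated by global sections (equivalently, that the adjunction counit $f^{*}f_{*}K_Y \to K_Y$ is surjective), is the substantive step. My plan is to compare $K_Y$ with the full sheaf $\cM_0$ produced by Lemma~\ref{sublemma:basics}\,(1) applied to the reflexive $O_X$-module $\omega_X$ (reflexive because $X$ is normal and $\omega_X$ is $S_2$). Grauert--Riemenschneider vanishing, which holds for rational surface singularities, gives $f_{*}K_Y = \omega_X$, so the counit $f^{*}\omega_X \to K_Y$ is an isomorphism over the smooth locus $X\setminus\Sing(X)$. Modding out $O_Y$-torsion produces an injection $\cM_0 \hookrightarrow K_Y$ of invertible sheaves which is an isomorphism off the exceptional set $E=f^{-1}(\Sing(X))$. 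Hence $K_Y \cong \cM_0\otimes O_Y(D)$ for some effective divisor $D$ supported on $E$, and it suffices to prove $D=0$.

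To argue $D=0$, I would invoke condition (iii) for $\cM_0$ to obtain $R^1 f_{*}O_Y(D)=0$, combine this with $R^1 f_{*}O_Y=0$ via the long exact sequence coming from $0\to O_Y\to O_Y(D)\to O_D(D)\to 0$, and then use the negative definiteness of the intersection matrix on $E$ (together with adjunction on the components of $D$) to force $D=0$. The main obstacle will be this last divisor comparison --- equivalently, proving that no effective exceptional divisor $D\ne 0$ can satisfy $R^1 f_{*}O_Y(D)=0$ --- which amounts to the classical but nontrivial statement that $K_Y$ is generated over $X$ by sections of $\omega_X$ on a rational surface singularity.
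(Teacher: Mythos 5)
Your verification of conditions (i) and (iii) matches what the paper relies on, and your reduction of (ii) to showing $D=0$ is sound; moreover the step you flag as the main obstacle closes cleanly, so the proposal is essentially complete. The paper disposes of the corollary in one line by citing Lemma~\ref{sublemma:basics}~(2) and Corollary~\ref{subcor:uniqueness of a full sheaf}: since $K_Y$ satisfies (i) and (iii), $f_*K_Y$ is reflexive, and $K_Y$ agrees off the exceptional set with the full sheaf $\cM_0$ of $f_*K_Y$, whence $K_Y\simeq\cM_0$. Your argument is precisely the unpacking of why that last identification is legitimate --- the uniqueness corollary as literally stated applies to sheaves already known to be full, so some such divisor comparison is genuinely needed. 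To finish it: from $0\to O_Y\to O_Y(D)\to O_D(D)\to 0$, normality of $X$ gives $f_*O_Y(D)=O_X=f_*O_Y$, hence $H^0(O_D(D))=0$; condition (iii) for $\cM_0\simeq K_Y(-D)$ reads $R^1f_*O_Y(D)=0$, which forces $H^1(O_D(D))=0$ as well, so $\chi(O_D(D))=0$. Riemann--Roch gives $\chi(O_D(D))=\frac{1}{2}\left(D^2-DK_Y\right)$, and since the resolution is minimal one has $DK_Y\ge 0$, while $D^2\le 0$ by negative definiteness of the intersection form with equality only for $D=0$; hence $D=0$ and $K_Y$ is full. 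Two small touch-ups: the equality $f_*K_Y=\omega_X$ follows from rationality via duality ($\omega_X/f_*\omega_Y$ is dual to $R^1f_*O_Y$), not from Grauert--Riemenschneider, which is the vanishing $R^1f_*K_Y=0$; and minimality of the resolution must be invoked explicitly, since on a non-minimal resolution the conclusion is false.
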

\begin{proof}Clear from Lemma~\ref{sublemma:basics}~(2) and Corollary \ref{subcor:uniqueness of a full sheaf}.
\end{proof}

\subsection{The minimal resolution $Y$ 
of the quotient $X=U/G$}
\label{subsec:minimal resol}
Let $k$ be any algebraically closed field 
of any characteristic, 
and $G$ a finite small subgroup 
of $\GL(2,k)$, which acts 
on $\bA^2_k$ from the left. 
We assume 
throughout this article that the order $|G|$ of $G$ and 
the characteristic of $k$ are coprime. \par

Let $S=k[x,y]$ be the polynomial ring of 
two variables, and $R=S^G$ the subring of $S$ 
consisting of all $G$-invariants.  
Let $U:=\bA^2_k=\Spec S$. $X=U/G=\Spec R$ and 
let $\pi:U\to X$ be 
the natural morphism. Since $G$ is small, 
the surface $X$ has a unique singular point $0$, 
which is a rational singularity \cite{Viehweg97}.
Let $f:Y\to X$ be the minimal 
resolution of $X=U/G$. Thus we have a commutative diagram:
\begin{equation}
\label{diag:comm diag for Y times U}
\begin{diagram}
Y\times_k U&\rTo^{\quad\quad\pi_U}&U\\
\dTo^{\pi_Y}&&\dTo^{\pi}\\
Y&\rTo^{f\quad\ \,}&X=U/G
\end{diagram}
\end{equation}

Let $F$ be {\em the fundamental divisor} of the singularity $(X,0)$. 
That is, 
the minimum of effective divisors $D$ 
of  $Y$  
 such that 
$D\neq 0$, $\Supp(D)\subset f^{-1}(0)$, and $DE'\leq 0$ for any 
irreducible component $E'$ of $f^{-1}(0)$. 

The following is due to \cite{Wunram88}.  
\begin{thm}
\label{thm:special mckay corresp}
Let $G$ be a finite small subgroup of $\GL(2,k)$ and $X=U/G$.
Under the same notation as in Subsection~\ref{subsec:minimal resol},  
\begin{enumerate}
\item there is a bijective correspondence 
between the following sets
\begin{enumerate}
\item[(i)] the set of irreducible components $E_i$ of $E:=f^{-1}(0)$,
\item[(ii)] the set of non-trivial indecomposable full $O_Y$-modules $\cM_i$, 
{\it special} in the sense that $H^1(Y,\cM_i^{\vee})=0$, 
\end{enumerate}
{where the} correspondence 
$\cM_i\mapsto E_i$ 
is given by
\begin{gather*}
c_1(\cM_i)E_j=\delta_{ij},
\end{gather*}
\item the rank of $\cM_i$ 
is equal to $c_1(\cM_i)F$, the multiplicity of $E_i$ in $F$.
\end{enumerate}
\end{thm}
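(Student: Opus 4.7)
My plan is to combine the global generation of full sheaves with a Riemann--Roch computation on the fundamental cycle, reading the Chern class of a special full sheaf from the vanishing $H^{1}(Y,\cM^{\vee})=0$ and the negative-definiteness of the intersection form on $E$. By Corollary~\ref{subcor:reflexive and full}, it suffices to classify indecomposable non-trivial reflexive $O_{X}$-modules. Since $(X,0)$ is a rational surface singularity, each exceptional component $E_{i}$ is a smooth rational curve, the intersection matrix $(E_{i}\cdot E_{j})$ is negative definite, and $R^{1}f_{*}O_{Y}=0$. Because $X$ is affine, $H^{1}(Y,\cM^{\vee})=H^{0}(X,R^{1}f_{*}\cM^{\vee})$ by Leray, so specialness of $\cM$ is equivalent to $R^{1}f_{*}\cM^{\vee}=0$. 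As a first reduction, global generation (Lemma~\ref{sublemma:basics} (ii)) yields $\cM|_{E_{i}}\simeq\bigoplus_{j=1}^{r}O_{\bP^{1}}(a_{ij})$ with every $a_{ij}\geq 0$; hence $c_{1}(\cM)\cdot E_{i}\geq 0$ for every irreducible component.

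The core step is to establish the identity
\[
\dim_{k} R^{1}f_{*}\cM^{\vee} \;=\; r - c_{1}(\cM)\cdot F.
\]
I would start from a minimal surjection $R^{n}\twoheadrightarrow M=f_{*}\cM$ and let $\cL$ be the locally free kernel of the induced $O_{Y}^{n}\twoheadrightarrow\cM$, giving the presentation $0\to\cL\to O_{Y}^{n}\to\cM\to 0$. Dualising (using $\cext^{1}(\cM,O_{Y})=0$) yields $0\to\cM^{\vee}\to O_{Y}^{n}\to\cL^{\vee}\to 0$, and applying $Rf_{*}$ together with the rationality vanishing $R^{1}f_{*}O_{Y}=0$ produces the four-term sequence
\[
0 \to M^{\vee} \to R^{n} \to f_{*}\cL^{\vee} \to R^{1}f_{*}\cM^{\vee} \to 0.
\]
Restricting the presentation to the fundamental cycle $F$ and applying Riemann--Roch on $F$ (using $\chi(O_{F})=1-p_{a}(F)=1$, which is Artin's criterion for rational singularities) gives $\chi(\cM|_{F})=r+c_{1}(\cM)\cdot F$; combining this with fullness condition (iii), namely $R^{1}f_{*}(\cM^{\vee}\otimes\omega_{Y})=0$, and relative Grothendieck duality along $f$ (which applies since rational singularities are Cohen--Macaulay, so $f^{!}O_{X}=\omega_{Y}[2]$) delivers the displayed identity.

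With this formula, specialness of $\cM$ becomes $c_{1}(\cM)\cdot F=r$, which combined with non-negativity and $F=\sum_{i}m_{i}E_{i}$, $m_{i}\geq 1$, reads $\sum_{i}m_{i}(c_{1}(\cM)\cdot E_{i})=r$. For indecomposable special $\cM$, a Krull--Schmidt argument in the category of indecomposable reflexive $O_{X}$-modules forces $c_{1}(\cM)$ to be concentrated on a single component, yielding $c_{1}(\cM_{i})\cdot E_{j}=\delta_{ij}$ and the rank formula $r=m_{i}$ of part (2); uniqueness of $\cM_{i}$ then follows from Corollary~\ref{subcor:uniqueness of a full sheaf} together with the fact that the intersection pairing with the $E_{j}$ determines the restriction of $\det\cM_{i}$ to a formal neighborhood of $E$. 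Existence of $\cM_{i}$ for each $E_{i}$ is supplied constructively by the tautological sheaves on $\GHilb(\bA_{k}^{2})$ developed in Section~\ref{sec:global McKay}. The main obstacle is the core identity: extracting an exact equality between $\dim_{k}R^{1}f_{*}\cM^{\vee}$ and $r-c_{1}(\cM)\cdot F$ (rather than a mere inequality) uses the full strength of fullness condition (iii) and the delicate interplay of relative Grothendieck duality with Riemann--Roch on the possibly non-reduced fundamental cycle.
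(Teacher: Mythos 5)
First, a remark on the comparison itself: the paper does not prove Theorem~\ref{thm:special mckay corresp} at all --- it is quoted from Wunram \cite{Wunram88} --- so there is no internal proof to measure your argument against, and it must be judged on its own terms. Judged so, it has a fatal error at its self-declared core step. The identity you propose, $\dim_k R^1f_*\cM^\vee = r - c_1(\cM)\cdot F$, has the wrong sign: Wunram's key lemma (for an indecomposable full sheaf with no trivial summand) reads $\dim_k R^1f_*\cM^\vee = c_1(\cM)\cdot F - r$. Your version is visibly false: for the non-special rank-one full sheaf $\cM_{\rho_2}$ on the resolution of $\frac{1}{3}(1,1)$ one has $r=1$ and $c_1(\cM)\cdot F=2$, so your right-hand side is $-1$, whereas a direct computation on the total space of $O_{\bP^1}(-3)$ (via the formal-neighborhood filtration, $H^1(O_E(-2))=k$) gives $\dim_k R^1f_*\cM^\vee=1$. (Both signs fail for $\cM=O_Y$, which signals that the hypothesis ``no free summand'' must enter somewhere; your sketch never isolates where.) Since you yourself flag the extraction of this identity as ``the main obstacle'' and only gesture at how Grothendieck duality and Riemann--Roch on $F$ are to be combined, the central step is both unproved and incorrectly stated. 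It is only by accident that either sign yields the same characterization of specialness, $c_1(\cM)\cdot F=r$; a proof cannot rest on an identity whose right-hand side can be negative.

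Even granting the corrected identity, the endgame does not go through as written. From $\sum_i m_i\,(c_1(\cM)\cdot E_i)=r$ with $c_1(\cM)\cdot E_i\ge 0$ and $m_i\ge 1$ you cannot conclude that $c_1(\cM)$ meets exactly one component, with multiplicity one: nothing yet excludes, say, an indecomposable special $\cM$ of rank $2$ with $c_1(\cM)\cdot E_i=c_1(\cM)\cdot E_j=1$ for two components of multiplicity $1$. ``Krull--Schmidt'' gives uniqueness of direct-sum decompositions, not control of $c_1$; ruling out such configurations is precisely the content of Wunram's argument, which constructs $\cM_i$ for each $E_i$ explicitly (following Artin--Verdier, as the dual of the kernel of an evaluation map attached to a line bundle $\cL_i$ with $c_1(\cL_i)\cdot E_j=\delta_{ij}$) and then shows every special full sheaf splits into these. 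Likewise, knowing $\det\cM_i$ near $E$ does not determine $\cM_i$ once $r>1$, so your uniqueness claim via Corollary~\ref{subcor:uniqueness of a full sheaf} is unsupported; and invoking the tautological sheaves on $\GHilb(\bA^2_k)$ for existence is circular in the context of this paper, which uses Theorem~\ref{thm:special mckay corresp} as an input to its analysis of $\GHilb(\bA^2_k)$ (e.g.\ in Lemma~\ref{sublemma:Psi O0rhoi*} and Proposition~\ref{prop:Phi(OErho(-1))}).
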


\subsection{$G$-equivariant locally 
free $O_U$-modules and reflexive 
$O_X$-modules}

Let $G$ be a small subgroup of $\GL(2,k)$.
Any $G$-equivariant 
locally free $O_U$-module $\cF$ of finite rank  
is of the form $\cF=\widetilde{M}$ 
for a projective finite $S$-module $M$ with $G$-action. 
Note that $M$ is a free $S$-module 
\footnote{$M$ is a projective $S$-module if 
and only if $M$ is a free $S$-module.} by \cite{Quillen76} and \cite{Suslin76}.

Let $M^G$ be the submodule consisting of all 
$G$-invariants of $M$. Then the natural exact sequence 
$0\to M^G\to M$ of $R$-modules 
 splits because the characteristic 
of $k$ is prime to the order $|G|$. In other words, 
there is an $R$-submodule $N$ of $M$ such 
that $M\simeq M^G\oplus N$.

For a $G$-equivariant locally free $O_U$-module $\cF$, 
we define a {subsheaf} $\cF^G$ of $\pi_*(\cF)$ 
consisting of $G$-invariant sections by 
$$\cF^G(W)=\Gamma(\pi^{-1}(W),\cF)^G=\Gamma(W, \pi_*(\cF))^G 
$$for any open subscheme $W$ of $X$.  
Since $\pi$ is finite, 
$\cF^G$ 
{is a coherent $O_X$-submodule of $\pi_*\cF$}  
associated to the $R$-module $M^G$. 
See \cite[\S~4]{BKR01}
for more about $G$-sheaves.  
Let $U'=U\setminus\{0\}$, $X'=X\setminus\{\pi(0)\}$, $i:U'\hookrightarrow U$ 
and $j:X'\hookrightarrow X$ the natural inclusions. \par

\begin{subrem}\label{subrem:equiv of cF and cF^G}
If $\cF$ is a $G$-equivariant locally free $O_U$-module,\footnote{$\cF$ is a free $O_U$-module by \cite{Quillen76} and \cite{Suslin76}.} 
$\cF^G$ is a reflexive $O_X$-module. Conversely  
if {$\cF=(\pi^*(\cG))^{\vee\vee}$} for a reflexive $O_X$-module $\cG$, then 
$\cF$ is a $G$-equivariant locally free $O_U$-module such that 
$\cF^G\simeq\cG$.  
\end{subrem}

\begin{subdefn}\label{subdefn:categories of sheaves}
For a normal surface $Z$ over $k$, we denote by  
$\Coh(Z)$ (resp. $\LF(Z)$, 
$\Rfl(Z)$) the category of coherent $O_Z$-modules, (resp. 
the category of locally free $O_Z$-modules of finite rank, 
the category of coherent reflexive $O_Z$-modules). 
If $Z$ has a $G$-action, then we denote by $\Coh^G(Z)$ (resp. $\LF^G(Z)$,  
$\Rfl^G(Z)$) the category of $G$-equivariant coherent $O_Z$-modules, (resp. 
the category of $G$-equivariant locally free $O_Z$-modules of finite rank, 
the category of $G$-equivariant coherent reflexive $O_Z$-modules). 
Let $G\op{-}\Hom_{O_Z}(A,B)$ (resp. 
$\Hom_{O_Z}(A,B)$) 
be the set of all $G$-homomorphisms (resp. all homomorphisms) 
from $A$ to $B$ for $A,B\in\Coh^G(Z)$ (resp. $A,B\in\Coh(Z)$). 
\end{subdefn}

\begin{sublemma}\label{lemma:equivalence}
The functor $\Ginv:\cF\mapsto \cF^G$ is 
an equivalence of the categories $\LF^G(U)$ and 
 $\Rfl(X)$.
\end{sublemma}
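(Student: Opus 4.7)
The plan is to verify that $\Ginv$ is essentially surjective and fully faithful; combined with the well-definedness observed in Remark~\ref{subrem:equiv of cF and cF^G}, these give the claimed equivalence. Essential surjectivity is furnished directly by the second half of that Remark: for any $\cG\in\Rfl(X)$ the sheaf $\cF:=(\pi^*\cG)^{\vee\vee}$ lies in $\LF^G(U)$ and satisfies $\Ginv(\cF)=\cF^G\simeq\cG$. Thus the assignment $\cG\mapsto(\pi^*\cG)^{\vee\vee}$ already supplies a candidate quasi-inverse; once $\Ginv$ is shown to be fully faithful, this candidate is automatically a genuine quasi-inverse and the lemma follows.

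For fully faithfulness I would restrict to the free locus. Set $U':=U\setminus\{0\}$, $X':=X\setminus\{\pi(0)\}$ with open inclusions $i:U'\hookrightarrow U$ and $j:X'\hookrightarrow X$. Since $G$ is small, $G$ acts freely on $U'$, so $\pi':=\pi|_{U'}:U'\to X'$ is an étale Galois cover with group $G$. Given $\cF_1,\cF_2\in\LF^G(U)$, Lemma~\ref{sublemma:reflexive}(3) applied on $U$ (where $\cF_i$ is reflexive, being locally free) and on $X$ (where $\cF_i^G$ is reflexive by Remark~\ref{subrem:equiv of cF and cF^G}) gives $\cF_i=i_*i^*\cF_i$ and $\cF_i^G=j_*j^*\cF_i^G$, whence
$$G\op{-}\Hom_{O_U}(\cF_1,\cF_2)=G\op{-}\Hom_{O_{U'}}(i^*\cF_1,i^*\cF_2),$$
$$\Hom_{O_X}(\cF_1^G,\cF_2^G)=\Hom_{O_{X'}}(j^*\cF_1^G,j^*\cF_2^G).$$
Because $\pi^{-1}(X')=U'$, formation of $G$-invariants commutes with this restriction, so $j^*\cF_i^G=(i^*\cF_i)^G$; both $\Hom$-sets are therefore identified with $\Hom$-sets computed on the étale cover $\pi'$.

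The remaining claim — that $\Ginv$ is bijective on $\Hom$-sets over $\pi':U'\to X'$ — is classical Galois descent for étale $G$-torsors: the functors $(\pi')^*:\LF(X')\to\LF^G(U')$ and $\Ginv:\LF^G(U')\to\LF(X')$ are mutually quasi-inverse equivalences. The hypothesis that $|G|$ be coprime to $\op{char}(k)$ enters through the averaging operator $\frac{1}{|G|}\sum_{g\in G}g$, which makes invariants exact and functorial on morphisms. The main technical obstacle is precisely this étale-descent step; the reductions above are essentially bookkeeping with Lemma~\ref{sublemma:reflexive} and the Remark. Functoriality of $\Ginv$ itself is automatic, since invariants form a natural subfunctor of $\pi_*$, so once the descent bijection is verified, chaining the identifications of the previous paragraph completes the proof.
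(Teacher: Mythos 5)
Your proof is correct and follows essentially the same route as the paper: both arguments reduce to the punctured locus $U'\to X'$, invoke descent for the free (flat/\'etale) quotient there, and extend across the puncture using Lemma~\ref{sublemma:reflexive} and Remark~\ref{subrem:equiv of cF and cF^G}. The only difference is organizational --- you check essential surjectivity globally via the Remark and full faithfulness by restriction, whereas the paper chains the equivalences $\Rfl(X)\simeq\Rfl(X')=\LF(X')\simeq\LF^G(U')\simeq\LF^G(U)$ --- which does not change the substance.
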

\begin{proof}
Since $\pi':U'\to X'$ is flat surjective, 
by faithfully flat descent 
\cite[Expos\'e VIII, Theorem 1.1]{SGA1}, 
$\Coh^G(U')$ and $\Coh(X')$ 
are equivalent 
by the functor $$\cF\mapsto \text{the descent of $\cF$ to $X'$.}$$ 
If $\cF=(\pi')^*\cH$ for a 
coherent $O_{X'}$-module $\cH$, 
then $\cF^G=(\pi^*\cH)^G=\cH$. In other words,  
$\cF^G$ is the descent of $\cF$ to $X'$, 
that is, $\cF=(\pi')^*(\cF^G)$. Moreover the category equivalence 
of $\Coh^G(U')$ and $\Coh(X')$ implies
$$G\op{-}\Hom_{O_{U'}}(\cF,\cG)=\Hom_{O_{X'}}(\cF^G,\cG^G).
$$\par
By Remark~\ref{subrem:equiv of cF and cF^G}, 
we see that $\Ginv:\cF\mapsto \cF^G$ is an equivalence 
between $\LF^G(U')$ and $\LF(X')$. 
By Lemma~\ref{sublemma:reflexive}~(2), $\LF(X')=\Rfl(X')$. 
By Lemma \ref{sublemma:reflexive}~(2)-(4), 
the open immersion $i:U'\hookrightarrow U$ 
 induces an equivalence between $\LF^G(U')$ and $\LF^G(U)$. 
Similarly $j:X'\hookrightarrow X$ induces an equivalence 
between $\Rfl(X')$ and $\Rfl(X)$.\par
Summarizing all the above, we see that 
$\Ginv:\cF\mapsto \cF^G$ is an equivalence 
between $\LF^G(U)$ and $\Rfl(X)$. See also \cite[Proposition~2.2]{Auslander86}.
\end{proof}

\begin{subdefn}For an irreducible representation $\rho$ of $G$, 
we define 
\begin{align*}
M_\rho:&=(O_U \otimesk  \rho^*)^G:=[\pi_*(O_U)\otimesk  \rho^*]^G,\\
\cM_{\rho}:&{=f^*M/O_Y\text{-torsions}}.
\end{align*}
\end{subdefn}

\begin{subcor}\label{cor:dual}
Let $\rho, \sigma$ be irreducible representations of $G$, and 
$\rho^*$ the dual representation of $\rho$,  Then 
$M_\rho$ is a reflexive $O_X$-module and 
$${\chom}_{O_X}(M_\rho, M_\sigma) \simeq M_{\rho^* \otimesk  \sigma}.$$ 
Especially, the dual $O_X$-module $M_\rho^\vee$ of $M_{\rho}$ 
is isomorphic to $M_{\rho^*}$.
\end{subcor}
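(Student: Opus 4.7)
My plan is to identify $\chom_{O_X}(M_\rho,M_\sigma)$ with $M_{\rho^*\otimesk\sigma}$ on the smooth open set $X':=X\setminus\{\pi(0)\}$ and then extend across the singular point using reflexivity. Reflexivity of $M_\rho$ itself is immediate from Lemma~\ref{lemma:equivalence}: the $G$-equivariant sheaf $O_U \otimesk \rho^*$ is free over $S$ of rank $\dim\rho$, hence lies in $\LF^G(U)$, so $\Ginv$ sends it into $\Rfl(X)$; the same argument shows that $M_{\rho^*\otimesk\sigma}$ is reflexive. For the $\chom$, writing $M_\sigma=j_*(M_\sigma|_{X'})$ via Lemma~\ref{sublemma:reflexive}~(3) (with $j:X'\hookrightarrow X$) and using adjunction gives $\chom_{O_X}(M_\rho,M_\sigma)\simeq j_*\chom_{O_{X'}}(M_\rho|_{X'},M_\sigma|_{X'})$; on the smooth surface $X'$ the inner $\chom$ of two locally free sheaves is locally free, so the pushforward is reflexive by Lemma~\ref{sublemma:reflexive}~(4). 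Thus by Lemma~\ref{sublemma:reflexive}~(3) it suffices to produce a canonical isomorphism of the two sides over $X'$.

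On $X'$ I pull back along the flat cover $\pi':U'\to X'$. The proof of Lemma~\ref{lemma:equivalence} identifies the inverse of $\Ginv$ on the smooth locus with $(\pi')^*$, so $(\pi')^*(M_\rho|_{X'})\simeq O_{U'}\otimesk\rho^*$ as $G$-equivariant sheaves, and similarly for $\sigma$. Flat base change for $\chom$ of locally free sheaves then yields
$$
(\pi')^*\chom_{O_{X'}}(M_\rho|_{X'},M_\sigma|_{X'})\ \simeq\ \chom_{O_{U'}}(O_{U'}\otimesk\rho^*,\,O_{U'}\otimesk\sigma^*)\ \simeq\ O_{U'}\otimesk(\rho\otimesk\sigma^*)
$$
as $G$-equivariant $O_{U'}$-modules. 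Taking $G$-invariants descends this back to $X'$ (which is the inverse of $(\pi')^*$ by Lemma~\ref{lemma:equivalence}) and gives $\chom_{O_{X'}}(M_\rho|_{X'},M_\sigma|_{X'})\simeq M_{\rho^*\otimesk\sigma}|_{X'}$, using the definition $(\rho^*\otimesk\sigma)^*=\rho\otimesk\sigma^*$. Extending by reflexivity finishes the $\chom$ formula.

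The final assertion $M_\rho^\vee\simeq M_{\rho^*}$ is the special case $\sigma=\rho_0$ (trivial representation): then $M_{\rho_0}=(O_U)^G=O_X$, and the formula collapses to $M_\rho^\vee=\chom_{O_X}(M_\rho,O_X)\simeq M_{\rho^*\otimesk \rho_0}=M_{\rho^*}$. The only real subtlety is verifying that $\Ginv$ is compatible with $\chom$; this reduces to flat base change on the smooth locus $X'$, together with the elementary $G$-equivariant identity $\Hom_k(\rho^*,\sigma^*)\simeq \rho\otimesk\sigma^*$. Everything else is bookkeeping once reflexivity and the descent statement from Lemma~\ref{lemma:equivalence} are in place.
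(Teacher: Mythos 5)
Your argument is correct and is essentially the derivation the paper intends (it states this as a corollary of Lemma~\ref{lemma:equivalence} and Remark~\ref{subrem:equiv of cF and cF^G} without writing out details): reflexivity of $M_\rho$ comes from $O_U\otimesk\rho^*\in\LF^G(U)$, the $\chom$ identity is checked on $X'$ via descent along the free cover $\pi':U'\to X'$ together with $\Hom_k(\rho^*,\sigma^*)\simeq\rho\otimesk\sigma^*$, and both sides extend uniquely across the singular point by Lemma~\ref{sublemma:reflexive}. Your verification that $\chom_{O_X}(M_\rho,M_\sigma)$ is itself reflexive (via adjunction for $j_*$) is exactly the point that needs to be said for the extension step, and you have said it.
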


{
\begin{subdefn}
\label{subdefn:special repres}
An irreducible representation $\rho$ of $G$ is said to be {\it special} if 
its corresponding full $O_Y$-module $\cM:=\cM_{\rho}:=[p_*q^*(O_U\otimesk \rho^*)]^G$ is special, that is, $H^1(Y,\cM^{\vee})=0$.  
A $k[G]$-module $N$ is said to be {\em special} if $N$ is an irreducible $k[G]$-module isomorphic 
to a special representation. 
\end{subdefn}
}

{
We show the existence of non-special representations when $G \not\subset \SL(2,k)$.
}

\begin{sublemma}\label{sublem:special_K}
For a full sheaf $\cM$ on $Y$,
$\cM$ is special if and only if $\cM \otimes_{O_Y} K_Y$ is a full sheaf.
\end{sublemma}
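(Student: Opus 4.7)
My plan is to check the three defining conditions (i)--(iii) for full sheaves (from Lemma~\ref{sublemma:basics}) for $\cN := \cM \otimes K_Y$, and observe that (i) and (ii) hold whenever $\cM$ is full, while (iii) is equivalent to the specialness of $\cM$.

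First, (i) is immediate: $\cM$ is locally free (being full) and $K_Y$ is invertible, so $\cN$ is locally free. For (ii), I would use Corollary~\ref{cor:KY is full} to conclude that $K_Y$ is itself a full sheaf, hence globally generated; $\cM$ is globally generated by its own fullness. The tensor product of two globally generated coherent sheaves is globally generated---concretely, the natural pairing $H^0(Y, \cM) \otimes H^0(Y, K_Y) \to H^0(Y, \cN)$ sends elementary tensors $s \otimes t$ to sections of $\cN$ whose values $s(y) \otimes t(y)$ span the fiber $\cN(y) = \cM(y) \otimes K_Y(y)$ at every point $y \in Y$. Thus $\cN$ satisfies (ii) too.

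The crux is (iii). Since $K_Y$ is invertible, one computes
\[
\cN^\vee \otimes \omega_Y = (\cM \otimes K_Y)^\vee \otimes K_Y = \cM^\vee \otimes K_Y^{-1} \otimes K_Y = \cM^\vee,
\]
so condition (iii) for $\cN$ reads $R^1 f_* \cM^\vee = 0$. Since $X = U/G$ is affine and $R^1 f_* \cM^\vee$ is coherent and supported at the singular point (as $f$ is an isomorphism away from $0$), the Leray spectral sequence (using $H^i(X,-) = 0$ for $i \ge 1$ on quasi-coherent sheaves) identifies this vanishing with $H^1(Y, \cM^\vee) = 0$, which is exactly the defining condition for $\cM$ to be special.

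Combining these three verifications yields that $\cN$ is full iff condition (iii) holds for it iff $\cM$ is special. There is essentially no obstacle: the one-line dualization $\cN^\vee \otimes \omega_Y = \cM^\vee$ does all the work for (iii), and the only slightly subtle input is Corollary~\ref{cor:KY is full}, which supplies the global generation of $K_Y$ needed in the tensor-product argument for (ii).
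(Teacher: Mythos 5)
Your proof is correct and follows essentially the same route as the paper's: global generation of $\cM\otimes_{O_Y}K_Y$ from Corollary~\ref{cor:KY is full}, and the one-line dualization $(\cM\otimes_{O_Y}K_Y)^\vee\otimes K_Y=\cM^\vee$ reducing condition (iii) to $H^1(Y,\cM^\vee)=0$. The only difference is that you spell out the (routine) Leray identification between $R^1f_*$ and $H^1$ over the affine base, which the paper leaves implicit.
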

\begin{proof}
Since $\cM$ and $K_Y$ are generated by global sections 
by Corollary \ref{cor:KY is full},  
so is $\cM \otimes_{O_Y} K_Y$.
Lemma follows from $H^1((\cM \otimes_{O_Y} K_Y)^\vee 
\otimes K_Y)=H^1(\cM^\vee)$.
\end{proof}
\begin{sublemma}
\label{lemma:G not in SL}
If $G$ is not contained in $\SL(2, k)$, 
then the following is true.
\begin{enumerate}
\item If $\cM$ is an indecomposable special full sheaf such {that} $\cM \otimes_{O_Y} K_Y$ is special, then $\cM\simeq O_Y$, $K_Y$ is special, 
$K_YF=1$ and $F^2=-3$.
\item {If $\rho\in\Irr(G)$ is special with  
$\rho\otimesk\det\rho_{\nat}^{\vee}$ special, then $\rho\simeq\rho_0$, 
$\det\rho_{\nat}^\vee$ is special, $K_YF=1$ and $F^2=-3$.}
\end{enumerate}
\end{sublemma}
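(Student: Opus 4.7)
The plan is to prove (1) directly on $Y$ by intersection theory using Wunram's rank formula from Theorem~\ref{thm:special mckay corresp}, and then to deduce (2) from (1) by transporting through the correspondence $\rho\leftrightarrow\cM_\rho$.

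The key preliminary fact is the strict positivity $K_Y\cdot F>0$ when $G\not\subset\SL(2,k)$. Since $K_Y$ is full by Corollary~\ref{cor:KY is full}, it is globally generated, and so $K_Y\cdot E_j\geq 0$ for every irreducible exceptional component $E_j$. On the other hand, $G\not\subset\SL(2,k)$ means the singularity is not Du Val, so by the classification of quotient surface singularities some $E_i$ has $E_i^2\leq-3$, and then adjunction gives $K_Y\cdot E_i=-2-E_i^2\geq 1$. Writing $F=\sum m_jE_j$ with $m_j\geq 1$ yields $K_Y\cdot F\geq m_i\geq 1$.

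For (1), I argue by contradiction. Suppose $\cM$ is a non-trivial indecomposable special full sheaf with $\cM\otimes K_Y$ also special. Tensoring by the line bundle $K_Y$ preserves indecomposability; Sublemma~\ref{sublem:special_K} combined with the hypothesis makes $\cM\otimes K_Y$ an indecomposable special full sheaf. If $\cM\otimes K_Y\simeq O_Y$, then $\cM\simeq K_Y^\vee$ has rank one, and Wunram's rank formula gives $1=c_1(\cM)\cdot F=-K_Y\cdot F$, contradicting the positivity. Otherwise $\cM\otimes K_Y$ is non-trivial of the same rank $r=\rank\cM$, and applying the rank formula to both sheaves and subtracting yields
\[
0=(c_1(\cM)+rK_Y)\cdot F-c_1(\cM)\cdot F=rK_Y\cdot F,
\]
so $K_Y\cdot F=0$, again a contradiction. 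Hence $\cM\simeq O_Y$, and then $\cM\otimes K_Y=K_Y$ is a non-trivial indecomposable special full sheaf of rank one; Wunram's formula gives $K_Y\cdot F=1$, and adjunction on the fundamental cycle with $p_a(F)=0$ (from rationality of the singularity) gives $F^2=-2-K_Y\cdot F=-3$.

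For (2), I translate via Wunram's correspondence. A direct computation of the action of $G$ on $dx\wedge dy$ identifies $K_U$ equivariantly with $O_U\otimes_k\det\rho_{\nat}^\vee$, hence $K_X\simeq M_{\det\rho_{\nat}^\vee}$; and since both $K_Y$ and $\cM_{\det\rho_{\nat}^\vee}$ are full sheaves extending this from $Y\setminus E$, they agree by Corollary~\ref{subcor:uniqueness of a full sheaf}. When $\rho$ is special, Sublemma~\ref{sublem:special_K} ensures $\cM_\rho\otimes K_Y$ is full, and since it agrees with $\cM_{\rho\otimes\det\rho_{\nat}^\vee}$ on $Y\setminus E$ the two are isomorphic on $Y$. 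Thus the hypothesis of (2) is precisely the hypothesis of (1) for $\cM=\cM_\rho$, and the conclusions $\cM\simeq O_Y$ and $K_Y$ special translate to $\rho\simeq\rho_0$ and $\det\rho_{\nat}^\vee$ special, while $K_YF=1$ and $F^2=-3$ are unchanged. The main obstacle will be this last bookkeeping step --- pinning down the precise one-dimensional character that corresponds to $K_Y$ under the paper's conventions, and confirming that tensoring by $K_Y$ on the sheaf side matches tensoring by $\det\rho_{\nat}^\vee$ on the representation side exactly when the specialty hypothesis holds (without it, $\cM_\rho\otimes K_Y$ need not be full and the identification breaks down).
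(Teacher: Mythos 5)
Your proposal is correct and follows essentially the same route as the paper: establish $K_YF>0$ from $G\not\subset\SL(2,k)$, apply Wunram's rank formula (Theorem~\ref{thm:special mckay corresp}~(2)) to both $\cM$ and $\cM\otimes_{O_Y}K_Y$ to force $\cM\simeq O_Y$, read off $K_YF=1$ and $F^2=-3$, and deduce (2) from (1) via $\cM_\rho\otimes_{O_Y}K_Y\simeq\cM_{\rho\otimesk\det\rho_{\nat}^{\vee}}$. The only differences are cosmetic refinements on your side: you separately dispose of the degenerate case $\cM\otimes_{O_Y}K_Y\simeq O_Y$ (which the paper's application of the rank formula tacitly skips) and you spell out why $\cM_\rho\otimes_{O_Y}K_Y$ and $\cM_{\rho\otimesk\det\rho_{\nat}^{\vee}}$ agree, via fullness and Corollary~\ref{subcor:uniqueness of a full sheaf}, where the paper simply asserts the isomorphism.
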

\begin{proof}
The assumption $G \not\subset \SL(2, k)$ implies that $K_YF>0$, 
because otherwise, any irreducible component $E$ of $F$ 
is a smooth rational curve with $E^2=-2$, hence 
the singularity $U/G$ is one of ADE, hence $G\subset\SL(2, k)$. \par 
We shall prove (1). Assume that $\cM \not\simeq O_Y$, and $\cM$ is 
an indecomposable special full sheaf of rank $r$ such that 
$\cM \otimes_{O_Y} K_Y$ is special. 
Then $r=c_1(\cM)F$ and $r=c_1(\cM \otimes_{O_Y} K_Y)F$
by Theorem \ref{thm:special mckay corresp}~(2). This contradicts 
$K_YF>0$.  It follows that $\cM\simeq O_Y$ and 
 $K_Y$ is a special full sheaf of rank $1$. Note that 
$K_Y$ is always a full sheaf by Corollary \ref{cor:KY is full}. 
We also note that $K_Y$ is special if and only if $K_Y F=1$ 
by Theorem \ref{thm:special mckay corresp}~(2), which is equivalent to $F^2=-3$ 
because $H^0(O_F)=k$ and $H^1(O_F)=0$ by \cite[Theorem 3]{Artin66}.
For (2), let $\cM=\cM_{\rho}$. Then 
$\cM \otimes_{O_Y}K_Y\simeq \cM_{\rho\otimesk\det\rho_{\nat}^{\vee}}$. 
Hence $\cM$ is a special full sheaf such that 
$\cM \otimes_{O_Y}K_Y$ is special. 
Hence (2) follows from (1).
\end{proof}

\begin{subcor}
\label{cor:nonspecial repres}
{Suppose $G$ is not contained in $\SL(2, k)$. Then
for a non-trivial special representation $\rho$, 
$\rho \otimes \det\rho_{\nat}^\vee$
is non-special.
Especially, non-special representations exist.}
\end{subcor}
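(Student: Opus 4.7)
The corollary is essentially a packaging of the previous lemma plus the existence of enough special representations, so the plan is short.

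First I would observe that the first assertion is nothing but the contrapositive of Lemma~\ref{lemma:G not in SL}~(2): that lemma says that if both $\rho$ and $\rho\otimesk\det\rho_{\nat}^{\vee}$ are special, then necessarily $\rho\simeq\rho_0$. Hence if $\rho$ is a non-trivial special representation, $\rho\otimesk\det\rho_{\nat}^{\vee}$ cannot be special. So the first statement requires no new argument beyond invoking Lemma~\ref{lemma:G not in SL}~(2).

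Next, to deduce the ``especially'' clause, I would produce at least one non-trivial special representation of $G$ and feed it into the first statement. Since $G$ is a non-trivial small finite subgroup of $\GL(2,k)$, the quotient $X=U/G$ is singular, so the exceptional set $E$ of the minimal resolution $f:Y\to X$ has at least one irreducible component. By Wunram's theorem (Theorem~\ref{thm:special mckay corresp}~(1)), the irreducible components of $E$ are in bijection with the non-trivial indecomposable special full $O_Y$-modules, hence with the non-trivial special irreducible representations of $G$ via $\rho\mapsto\cM_\rho$. Therefore some non-trivial special $\rho$ exists; then $\rho\otimesk\det\rho_{\nat}^{\vee}$ is non-special by the first assertion, witnessing a non-special representation.

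No step presents a genuine obstacle here: the whole content is in Lemma~\ref{lemma:G not in SL}~(2), whose proof already used the key numerical input $K_YF>0$ coming from $G\not\subset\SL(2,k)$. The only thing to be a touch careful about is citing Wunram's correspondence correctly to guarantee the existence of a non-trivial special $\rho$, rather than (incorrectly) trying to apply the first assertion to $\rho=\rho_0$, which would not yield anything since $\rho_0$ is trivial.
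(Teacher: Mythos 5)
Your proposal is correct and is exactly the argument the paper intends: the first assertion is the contrapositive of Lemma~\ref{lemma:G not in SL}~(2), and the existence claim follows because $G\not\subset\SL(2,k)$ forces $X=U/G$ to be singular, so Theorem~\ref{thm:special mckay corresp}~(1) supplies at least one non-trivial special $\rho$ to feed into the first assertion. The paper leaves the corollary without an explicit proof precisely because this is the intended two-line deduction.
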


\subsection{{Universal $G$-cluster and full sheaves}}

The following is due to \cite{Ishii02}. 
\begin{subthm}
Let $U=\bA^2_k$, $G$ a finite subgroup of $\GL(2,k)$ 
such that the order $|G|$ of $G$ is 
prime to the characteristic of $k$.
Then {$Y=\GHilb(U)$} is connected and it is a minimal resolution of $X:=U/G$.
\end{subthm}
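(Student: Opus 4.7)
The plan is to verify four properties of $Y=\GHilb(U)$ in sequence: smoothness of dimension $2$, connectedness, that the Hilbert--Chow morphism $\tau:Y\to X$ is proper and birational (hence a resolution), and that this resolution is minimal. The overall strategy adapts the derived-category / Fourier--Mukai technique of \cite{BKR01} to the $\GL(2,k)$-quotient setting.

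First I would establish smoothness by computing, at every closed point $[Z_y]\in Y$, the $G$-equivariant Zariski tangent space $G\text{-}\Hom_{O_U}(I_y,O_{Z_y})$ and showing it has dimension $2$. Over the free locus this is immediate, since $\pi$ is \'etale on $U\setminus\{0\}$. Over a point in $\tau^{-1}(0)$ one uses the isomorphism $O_{Z_y}\cong k[G]$ of $k[G]$-modules, together with a $G$-equivariant Koszul-type resolution of $I_y$ by free $S$-modules, to reduce the computation to a character count. More conceptually, I would show that the Fourier--Mukai transform with kernel the universal $G$-cluster induces a fully faithful functor $\Phi:D^b(Y)\to D^b_G(U)$; smoothness of $Y$ then follows because the target has finite homological dimension, which $\Phi$ transports back.

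Connectedness and the resolution property are then formal: the open locus $Y^{\circ}\subset Y$ of free $G$-orbits is mapped isomorphically by $\tau$ onto $X\setminus\{0\}$ and is dense in the smooth scheme $Y$, so $Y$ is irreducible and hence connected. Properness of $\tau$ is standard for $\GHilb$ over an affine base, and birationality comes from the free-locus isomorphism, so $\tau$ is a resolution of singularities.

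The main obstacle is minimality, equivalently the bound $E_i^2\le -2$ for every irreducible exceptional component $E_i$ (the $E_i$ are smooth rational curves thanks to the rationality $H^1(O_F)=0$ of \cite{Artin66}). I would approach this using the tautological reflexive $O_Y$-modules obtained as $G$-isotypic components of the pushforward of the universal $G$-cluster structure sheaf: by Lemma \ref{lemma:equivalence} and Corollary \ref{subcor:reflexive and full} these correspond to full sheaves on $Y$, so their first Chern classes lie in $\op{Pic}(Y)$ with controlled pairings against the $E_i$. A direct analysis of these intersection patterns for the special representations (Theorem \ref{thm:special mckay corresp}, applied to an intermediate blowdown along a hypothetical $(-1)$-curve) yields a contradiction unless every $E_i$ satisfies $E_i^2\le -2$, via the adjunction identity $K_Y\cdot E_i=-2-E_i^2$, whence minimality.
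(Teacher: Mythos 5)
The paper does not actually prove this statement: it is quoted with the attribution ``The following is due to \cite{Ishii02}'', so there is no internal proof to compare against, and the benchmark is the argument in \cite{Ishii02}. That argument runs in the opposite direction from yours: Ishii constructs an explicit flat family of $G$-clusters over the \emph{already known} minimal resolution $Y_{\min}$ of $X$ (namely $\Spec_{Y_{\min}}$ of the torsion-free pullback of the $O_X$-algebra $\pi_*O_U$, which is locally free with fibres isomorphic to $k[G]$), obtains a classifying morphism $Y_{\min}\to\GHilb(U)$, and proves it is an isomorphism; smoothness, connectedness and minimality are then automatic from the identification with $Y_{\min}$. You instead try to establish the four properties intrinsically on $\GHilb(U)$, and the argument has a genuine gap at the decisive step.

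The gap is in the connectedness step: you assert that the locus $Y^{\circ}$ of free orbits ``is dense in the smooth scheme $Y$, so $Y$ is irreducible''. Density of $Y^{\circ}$ is precisely the content of the statement and cannot be assumed. A priori $\GHilb(U)$ could have additional irreducible or connected components consisting entirely of $G$-clusters supported at the origin, and pointwise smoothness of dimension $2$ does nothing to exclude them --- it would only say that such a component is itself a smooth surface. (This is not a hypothetical worry: for $G\subset\GL(n)$ with $n\geq 3$ such extra components do occur.) Ruling them out is the heart of the matter; in the BKR/Ishii framework it is done either by the explicit family argument above or by showing that every $G$-cluster is the image of a point sheaf under $\Phi$, which rests on the commutative-algebra Intersection Theorem and not merely on the remark that ``the target has finite homological dimension, which $\Phi$ transports back''. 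Relatedly, your fully-faithfulness route to smoothness is circular as stated: the criterion for fully faithfulness (the computation of $\Hom(\Phi O_{y_1},\Phi O_{y_2}[i])$, Serre duality on $Y$, the bound on $\dim(Y\times_XY)$) is applied to the reduced dominant component, and smoothness of that component is an \emph{output} of the Intersection Theorem, not of abstract transport of homological dimension. Finally, the minimality step is too vague to check: Theorem~\ref{thm:special mckay corresp} (Wunram) is a statement about the minimal resolution, and ``applying it to an intermediate blowdown along a hypothetical $(-1)$-curve'' is not a licensed move without justification; in \cite{Ishii02} minimality is never argued this way, since $\GHilb(U)$ is identified with $Y_{\min}$ outright.
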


\begin{subdefn}Let $\cZ$ be the universal cluster over $Y=\GHilb(U)$.
Consider the commutative diagram:
\begin{equation}\label{diag:comm diag for cZ}
\begin{diagram}
\cZ&\rTo^{\quad q}&U\\
\dTo^{p}&&\dTo^{\pi}\\
Y&\rTo^{f\ \ }&X=U/G
\end{diagram}
\end{equation}
where 
$q:=(\pi_U)_{\cZ}$ and $p:=(\pi_Y)_{\cZ}$. 
Note that 
$$\cZ\times_Y(Y\setminus F)\simeq (U\times_X(X\setminus\{O\}).$$
\end{subdefn}

\begin{sublemma}
\label{sublemma:direct image of OZ and OU}
The following is true:
\begin{enumerate}
\item $q_*(O_{\cZ})=O_U$ and $f_*p_*(O_{\cZ})\simeq
\pi_*q_*(O_{\cZ})=\pi_*(O_U)$, 
\item $\pi_*(O_U)$ is a reflexive $O_X$-module, 
\item  $p_*(O_{\cZ})$ is the torsion free pullback of $\pi_*(O_U)$ by $f$, 
\item for any irreducible representation $\rho$ of $G$, 
$$\cM_{\rho} \cong [p_*q^*(O_U)\otimesk \rho^*]^G=[p_*(O_{\cZ})\otimesk \rho^*]^G,$$
\item $p_*(O_{\cZ})=\bigoplus_{\rho\in\Irr(G)}\cM_{\rho}\otimesk \rho$ and $\pi_*(O_U)=\bigoplus_{\rho\in\Irr(G)}M_{\rho}\otimesk\rho$.
\end{enumerate}
\end{sublemma}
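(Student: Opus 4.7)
The strategy is to establish (1) first, then observe that (2) and (5) follow either directly or by formal isotypic decomposition, and tackle (4) as the heart of the argument; (3) falls out of (4) and (5) by summation.

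For (1), the morphism $q$ is proper (being the restriction of the projection $Y\times_k U\to U$ to the closed subscheme $\cZ$) and an isomorphism over $U\setminus\{0\}$ by the displayed identification $\cZ\times_Y(Y\setminus F)\cong U\setminus\{0\}$. The fiber $q^{-1}(0)$ equals $F$ set-theoretically: any $G$-cluster containing the fixed point $0$ must be entirely supported at $0$, since $G$ acts freely on $U\setminus\{0\}$, so otherwise the cluster would contain a free $G$-orbit of length $|G|$ together with $0$ and exceed total length $|G|$. Because $F$ is connected, $q$ is proper with geometrically connected fibers onto the smooth variety $U$, so Stein factorization yields $q_*O_\cZ=O_U$; the identity $f_*p_*O_\cZ=\pi_*q_*O_\cZ=\pi_*O_U$ then follows from the commutativity of \eqref{diag:comm diag for cZ}. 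For (2), $\pi$ is finite étale of degree $|G|$ on $U\setminus\{0\}$, so $\pi_*O_U$ is locally free there and reflexive on $X$ by Lemma~\ref{sublemma:reflexive}(4). For (5), since $|G|$ is invertible in $k$, the canonical isotypic decomposition of the $G$-module $p_*O_\cZ$ (locally free of rank $|G|$ by flatness of $p$, with fiber the regular representation $k[G]$) and of $\pi_*O_U$ yields $\bigoplus_\rho[\,\cdot\,\otimesk\rho^*]^G\otimesk\rho$, each isotypic summand being locally free of rank $\dim\rho$.

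The heart of the proof is (4). Setting $\cL_\rho:=[p_*O_\cZ\otimesk\rho^*]^G$, part (1) gives $f_*\cL_\rho=[\pi_*O_U\otimesk\rho^*]^G=M_\rho$, so the adjunction $f^*M_\rho\to\cL_\rho$ factors through the torsion-free quotient and yields an injection $\phi_\rho:\cM_\rho\hookrightarrow\cL_\rho$ which is an isomorphism over $Y\setminus F$. To globalize $\phi_\rho$ to an isomorphism, the plan is to verify that $\cL_\rho$ is a \emph{full} $O_Y$-module and then invoke Corollary~\ref{subcor:reflexive and full}: since $\cM_\rho$ and $\cL_\rho$ are both full sheaves pushing forward to $M_\rho$, the bijection between full sheaves and reflexive modules forces $\cL_\rho\cong\cM_\rho$, and $\phi_\rho$ (nonzero on the dense open $Y\setminus F$) realizes this isomorphism. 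Condition (i) is flatness of $p$; condition (ii), generation by global sections, follows from the surjection $k[x,y]\otimes_k O_Y\to p_*O_\cZ$ (arising from the universal embedding $\cZ\subset Y\times U$) after passing to $\rho^*$-isotypic parts, using $[k[x,y]\otimesk\rho^*]^G=M_\rho$. Condition (iii), $R^1f_*(\cL_\rho^\vee\otimes\omega_Y)=0$, is the subtle step and the main expected obstacle: the plan is to invoke Grothendieck--Serre duality for $f$, exploiting the maximal Cohen--Macaulay property of $M_\rho=f_*\cL_\rho$ on $X$ together with the auxiliary vanishing $R^if_*\cL_\rho=0$ for $i>0$ (for $i=1$, combine (ii) with $R^1f_*O_Y=0$ from the rational singularities of $X$ via the long exact sequence of a presentation $O_Y^{\oplus N}\twoheadrightarrow\cL_\rho$; for $i\geq 2$ vanishing is automatic by dimensional reasons). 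Finally (3) follows from (4) and (5) by summing $\phi_\rho\otimesk\rho$ over $\Irr(G)$ and using that the torsion-free pullback commutes with finite direct sums, so $f^*\pi_*O_U/(\text{torsion})=\bigoplus_\rho\cM_\rho\otimesk\rho=p_*O_\cZ$.
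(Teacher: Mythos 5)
Your proposal is correct in substance and reaches the same conclusions, but it is organized differently from the paper and replaces one citation by a from-scratch argument. The paper proves (3) directly: $p_*(O_{\cZ})$ is locally free (flatness of $p$) and globally generated (quotient of $O_Y\otimes_k k[x,y]$), and together with $f_*p_*(O_{\cZ})\cong\pi_*(O_U)$ this already identifies it with the torsion-free pullback; it then gets (4) by citing \cite[Corollary 3.2]{Ishii02} for fullness of the isotypic pieces and taking torsion-free pullbacks, and finally (5). You invert the order — (5) by formal isotypic decomposition, then (4), then (3) by summation — and you prove the fullness condition (iii) yourself via Grothendieck duality, the maximal Cohen--Macaulay property of $M_\rho$, and $R^1f_*\cL_\rho=0$ from rationality. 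That argument is sound (it is essentially Esnault's), and it buys self-containedness. But note that for the identification $\cL_\rho\cong\cM_\rho$ it is superfluous: your own conditions (i) and (ii) already give a surjection $f^*M_\rho\to\cL_\rho$ (since $X$ is affine, global generation means $f^*f_*\cL_\rho\twoheadrightarrow\cL_\rho$), which combined with the injectivity of $\phi_\rho$ forces $\cM_\rho\cong\cL_\rho$ without ever invoking (iii) or Corollary~\ref{subcor:reflexive and full} — this shortcut is exactly the paper's proof of (3) applied isotypically.

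Two small imprecisions. In (1), connectedness of the fibers of $q$ does not by itself give $q_*O_{\cZ}=O_U$ via Stein factorization (a finite scheme with one point can still have $H^0$ larger than $k$); what you actually need is that $q$ is proper and birational onto the normal variety $U$ with $\cZ$ integral, or equivalently that $q_*O_{\cZ}$ has no sections supported at the origin — this follows from flatness of $p$ (every associated point of $O_{\cZ}$ dominates $Y$, so $\cZ$ is irreducible and reduced since it is so over $Y\setminus F$), and then $O_U\subset q_*O_{\cZ}\subset i_*O_{U'}=O_U$ as in the paper. In (2), applying Lemma~\ref{sublemma:reflexive}(4) requires not only that $\pi_*O_U$ is locally free on $X'$ but also that $\pi_*O_U=j_*\bigl((\pi_*O_U)|_{X'}\bigr)$; the paper checks this via $\Gamma(U\setminus\{0\},O_U)=\Gamma(U,O_U)$, and you should too.
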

\begin{proof} Let $q':\cZ':=\cZ\setminus q^{-1}(0)\to U'$ (resp. 
$p':\cZ'\to Y'=Y\setminus F$) be the restriction of $q$ (resp. of $p$). 
First we prove $q_*(O_{\cZ})=O_U$. 
Since $q'$ is an isomorphism, 
$(q')_*(O_{\cZ'})\simeq O_{U'}$. 
It follows that $O_U\subset q_*(O_{\cZ})\subset i_*((q')_*(O_{\cZ'}))=i_*(O_{U'})=O_U.$
Hence $q_*(O_{\cZ})=O_U$. This proves (1). \par
Let $\cF=\pi_*(O_U)$. Let $X'=X\setminus\{0\}$ 
and $j:X'\hookrightarrow X$. 
Then 
$$\Gamma(X,j_*\cF)=\Gamma(X',\cF)=\Gamma(U\setminus\{0\},O_U)=\Gamma(U,O_U)=\Gamma(X,\cF).$$ Since $X$ is affine, 
this shows $j_*(\cF)=\cF$. 
By Lemma~\ref{sublemma:reflexive}~(4), $\pi_*(O_U)=\cF$ 
is reflexive.  This proves (2). \par
Next we prove (3). See also the proof of \cite[Theorem 3.1]{Ishii02}. 
{Since $\cZ$ is finite and flat over $Y$,
$p_*(O_{\cZ})$ is locally free.
Moreover, it is generated by global sections because 
it is a quotient of the quasi-coherent $O_Y$-module $O_Y \otimes_k k[x, y]$.}
So 
{it follows from $f_*p_*(O_{\cZ}) \cong \pi_*{O_U}$ that}
$p_*(O_{\cZ})$ is the torsion free pullback of $\pi_*(O_U)$ by $f$. \par
Next we prove (4).  {It is proved in \cite[Corollary 3.2]{Ishii02} that 
$[p_*(O_{\cZ})\otimesk \rho^*]^G$ is a full $O_Y$-module. We have 
\begin{align*}
f_*([p_*(O_{\cZ})\otimesk \rho^*]^G)
&=[f_*p_*(O_{\cZ})\otimesk \rho^*]^G
=[\pi_*q_*(O_{\cZ})\otimesk \rho^*]^G\\
&=[\pi_*(O_U)\otimesk \rho^*]^G=M_{\rho}.
\end{align*}
Taking the torsion-free pullbacks of the both sides, we obtain (4).}

Next we prove (5). By \cite[Subsection~4.2]{BKR01} and 
since the characteristic of $k$ is prime to $|G|$,  there is a decomposition  
$p_*(O_{\cZ})=\bigoplus_{\rho\in\Irr(G)}\cN_{\rho}\otimesk \rho$ 
for some $O_Y$-module $\cN_{\rho}$.
For any irreducible representation $\gamma$ of $G$, we have 
\begin{align*}
\cM_{\gamma}:&=[p_*(O_{\cZ})\otimesk \gamma^*]^G
\simeq \bigoplus_{\rho\in\Irr(G)}\cN_{\rho}\otimesk (\Hom_k(\gamma,\rho))^G
\simeq \cN_{\gamma}.
\end{align*}
The rest of (5) follows from (1) and (4).  
\end{proof}

\section{Global McKay correspondence}
\label{sec:global McKay}
The purpose of this section is to
prove Theorem~\ref{thm:mckay isom} together with Corollary~\ref{cor:global local isom}, which reformulate Theorem \ref{thm:thm} in a more precise way.
This is a global version of Theorem \ref{thm:local McKay} proved in \cite{Ishii02}
in the sense that it describes a family of $k[G]$-modules in Theorem \ref{thm:local McKay}
parameterized by the exceptional set.
\subsection{Local McKay correspondence}
\label{subsec:local McKay}
We first recall results from \cite{Ishii02}.
\begin{defn}
\label{defn:Ishii's Psi and Phi}
We define two functors 
$$\Psi:D^G(U)\to D(Y),\quad \Phi:D(Y)\to D^G(U)$$
by
\begin{align*}
\Psi(A)&=[p_*\bL q^*(A)]^G=[\bR (\pi_Y)_*
(O_{\cZ}\overset{\bL}{\otimes}_{O_{Y\times_k U}}\pi_U^*(A)]^G,\\
\Phi(B)&=\bR (\pi_U)_*
(O_{\cZ}^{\vee}
\overset{\bL}{\otimes}_{O_{Y\times_k U}}\pi_Y^*(B\otimesk \rho_0)
\overset{\bL}{\otimes}_{O_{Y\times_k U}}\pi_U^*K_U[2]).
\end{align*}
where $O_{\cZ}^{\vee}$ is the derived dual of $O_{\cZ}$, that is, 
$\bR{\chom}_{O_{Y\times_k U}}(O_{\cZ}, O_{Y\times_k U})$.
\end{defn}

\begin{thm}\label{thm:Ishii fully faithful}
$\Phi$ is fully faithful and 
$\Psi$ is a left adjoint of $\Phi$.
\end{thm}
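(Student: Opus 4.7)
The plan is to treat $\Psi$ and $\Phi$ as $G$-equivariant Fourier--Mukai functors on $Y \times_k U$, with kernels $O_{\cZ}$ and $O_{\cZ}^{\vee} \otimes \pi_U^* K_U[2]$ respectively, and to verify the two assertions in that framework. The theorem is attributed to \cite{Ishii02}, so I only outline the main steps.

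For the adjunction $\Psi \dashv \Phi$: since the characteristic of $k$ is prime to $|G|$, the invariants functor $(-)^G$ is exact and is simultaneously a left and a right adjoint to the functor equipping an object with the trivial $G$-action, so it can be absorbed into either side of the adjunction as needed. The remaining statement is the standard fact that for a Fourier--Mukai functor with a perfect kernel, the left-adjoint kernel is obtained by derived-dualising and then twisting by the canonical sheaf of the target factor, shifted by its dimension. Concretely, I would chase $\Hom_Y(\Psi(A), B)$ through, in order: the adjunction $\bR(\pi_Y)_* \dashv \pi_Y^!$ using $\pi_Y^!(-) \simeq \pi_Y^*(-) \otimes \pi_U^* K_U[2]$ (since $\pi_Y$ is smooth of relative dimension $2$), the tensor--Hom identity $\bR\chom(O_{\cZ}, -) \simeq O_{\cZ}^{\vee} \otimes (-)$ (valid because $O_{\cZ}$ is $p$-flat of finite rank, hence perfect on $Y \times_k U$), and finally $\bL\pi_U^* \dashv \bR(\pi_U)_*$. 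Collecting these steps produces $\Hom_{D^G(U)}(A, \Phi(B))$.

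For the fully-faithfulness of $\Phi$, I would apply a $G$-equivariant version of Bridgeland's criterion. This reduces the claim to three conditions on closed points $y, y' \in Y$: first, $\Hom_{D^G(U)}(\Phi(O_y), \Phi(O_y)) \cong k$; second, $\Hom_{D^G(U)}(\Phi(O_y), \Phi(O_{y'})[i]) = 0$ for all $i$ whenever $y \ne y'$; and third, $\Hom_{D^G(U)}(\Phi(O_y), \Phi(O_y)[i]) = 0$ for $i \notin [0, 2]$. Unwinding definitions, $\Phi(O_y)$ identifies, up to the fixed canonical twist $K_U[2]$, with the structure sheaf $O_{\cZ_y}$ of the universal $G$-cluster over $y$ viewed as a $G$-equivariant $O_U$-module. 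The first condition is then immediate: $O_{\cZ_y} \simeq k[G]$ as a $k[G]$-module, and the $G$-equivariant endomorphism ring of this is $(O_{\cZ_y})^G = k$. The bounded-amplitude condition follows from $Y$ being smooth of dimension $2$. The orthogonality for distinct $y, y'$ outside the exceptional fibre $F$ is clear because the clusters $\cZ_y$ and $\cZ_{y'}$ then have disjoint supports.

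The main obstacle is the orthogonality statement when both $y$ and $y'$ lie on $F$: the clusters $\cZ_y$ and $\cZ_{y'}$ share the origin of $U$ in their supports, and the required vanishing must be extracted from a Koszul-type resolution of $O_{\cZ}$ on $Y \times_k U$ combined with a dimension estimate for the fibre product $\cZ \times_Y \cZ$. The universal property of $Y = \GHilb(U)$, together with the fact that $f : Y \to X$ is birational, forces this fibre product to have the expected dimension, and this is precisely the BKR-type intersection-theorem input that yields the $\Ext$-vanishing. In the $\SL(2, k)$ case the same input upgrades fully-faithfulness to an equivalence of categories; for general small $G \subset \GL(2, k)$ only fully-faithfulness survives, and the essential image of $\Phi$ is a proper admissible subcategory of $D^G(U)$ whose semi-orthogonal complement is analysed in Section~\ref{sec:semi-orth proj}.
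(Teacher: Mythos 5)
The paper itself contains no proof of this theorem: it is recalled from the literature, with the argument delegated to \cite[\S 6]{Ishii02} and \cite[Proposition 1.1, Lemma 2.9]{Ishii-Ueda15}. So the only possible comparison is with the cited sources, and your outline does follow their strategy: the adjunction $\Psi\dashv\Phi$ by chasing $\Hom_{D(Y)}(\Psi(A),B)$ through Grothendieck--Verdier duality for $\pi_Y$ (legitimate despite $\pi_Y$ being non-proper because everything is supported on $\cZ$, which is finite over $Y$), the identity $\bR\chom(O_{\cZ},-)\simeq O_{\cZ}^{\vee}\otimes(-)$ for the perfect kernel, and the adjunction $\bL\pi_U^*\dashv\bR(\pi_U)_*$, with the invariants functor absorbed using that $|G|$ is invertible; and full faithfulness via the Bondal--Orlov/Bridgeland pointwise criterion. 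That much is sound. One small imprecision: with the kernel $O_{\cZ}^{\vee}\otimes\pi_U^*K_U[2]$, the object $\Phi(O_y)$ is not $O_{Z_y}$ up to a fixed twist but its Serre dual $\cext^2_{O_U}(O_{Z_y},K_U)\simeq\Hom_k(O_{Z_y},k)$; this is harmless (it is again a $G$-sheaf of length $|G|$ with underlying module $k[G]$ and $G$-equivariant endomorphism algebra $k$, and the $\Ext$ groups to be checked dualize), but the identification as you state it is not what the formula gives.

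The genuine weakness is the step you yourself flag as the main obstacle and then do not carry out: the vanishing of $\Hom^i_{D^G(U)}(\Phi(O_y),\Phi(O_{y'}))$ for distinct $y,y'$ on the exceptional fibre. You attribute it to ``the BKR-type intersection-theorem input'' via a dimension estimate on $\cZ\times_Y\cZ$; but the intersection theorem is the tool that makes the three-dimensional argument work, and it is neither needed nor what the cited proof uses in the surface case. There the vanishing is obtained by elementary means: in degree $0$, a $G$-equivariant $O_U$-map $O_{Z_y}\to O_{Z_{y'}}$ sends $1$ to a $G$-invariant element of $O_{Z_{y'}}\simeq k[G]$, hence is zero or surjective, and surjectivity forces $I_y=I_{y'}$, i.e.\ $y=y'$; degree $2$ is handled by equivariant Serre duality on $U$ together with a Hom-vanishing of the same kind (now with a twist by $\det\rho_{\nat}^{\vee}$); and degree $1$ then vanishes because the equivariant Euler pairing $\chi^G(O_{Z_y},O_{Z_{y'}})$ depends only on the classes of the two sheaves in the Grothendieck group of finite-length $G$-sheaves supported at the origin, where both equal the class of $O_0\otimesk k[G]$, and the Koszul resolution of $O_0$ shows that this pairing is multiplication by $1-[\rho_{\nat}]+[\wedge^2\rho_{\nat}]$ in the representation ring, which annihilates the class of the regular representation. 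As written, your proposal asserts the crucial vanishing by analogy with a different-dimensional situation rather than proving it; that step is the entire content of the theorem and must be supplied.
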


See \cite[\S~6]{Ishii02} {and \cite[Proposition 1.1, Lemma 2.9]{Ishii-Ueda15}.}

\begin{subdefn}
\label{subdefn:number_special}
We number 
all irreducible representations $\rho$ in the following manner:
 $\rho_0$ is trivial, $\rho_i$ $(1\leq i\leq m)$ is non-trivial special and 
$\rho_j$ $(j\geq m+1)$ is non-trivial nonspecial. \end{subdefn}
We denote $\cM_{\rho_i}$ by $\cM_i$.

\begin{sublemma}
\label{sublemma:Psi O0rhoi*}
Let $\rho_i$ be an irreducible special representation of $G$, 
$\cM_i=\cM_{\rho_i}$, and $E_i$ the irreducible component of $F$ 
with $c_1(\cM_i)E_j=\delta_{ij}$ 
as in Theorem \ref{thm:special mckay corresp}. 
 Then 
\begin{equation*}
\Psi(O_0\otimesk \rho^*)=
\begin{cases}O_{E_i}(-1)[1]&\text{if $\rho=\rho_i$ is non-trivial special}\\
O_F&\text{if $i=0$, that is, $\rho=\rho_0$ is trivial},\\
0&\text{if $\rho$ is non-trivial, nonspecial}.
\end{cases}
\end{equation*}
\end{sublemma}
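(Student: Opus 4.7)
This lemma is a restatement of the local McKay correspondence of \cite{Ishii02}, and my plan is to prove it by direct computation using a Koszul resolution. Writing $V=\rho_{\nat}^*$ for the space of linear forms on $U=\bA^2_k$, the $G$-equivariant Koszul complex
$$0\to O_U\otimesk \det V\to O_U\otimesk V\to O_U\to O_0\to 0$$
resolves $O_0$ by $O_U$-free sheaves. Tensoring with $\rho^*$, applying $\bL q^*=q^*$, pushing down by the exact functor $p_*$ (exact since $p$ is finite and $\cZ$ is flat over $Y$), and taking $G$-invariants, the decomposition $p_*O_\cZ=\bigoplus_\sigma\cM_\sigma\otimesk\sigma$ from Lemma~\ref{sublemma:direct image of OZ and OU}(5) identifies $\Psi(O_0\otimesk\rho^*)$ with the cohomology of the three-term complex of full sheaves on $Y$
$$\cM_{\rho\otimesk\det\rho_{\nat}}\longrightarrow \bigoplus_\sigma\cM_\sigma\otimesk \Hom_G(\rho,\sigma\otimesk\rho_{\nat}^*)\longrightarrow \cM_\rho$$
in degrees $-2,-1,0$, whose differentials are multiplication by the linear forms $x,y$ on $p_*O_\cZ$.

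Since $x,y$ vanish only at $0\in U$, the complex is exact away from $F$, so $\Psi(O_0\otimesk\rho^*)$ is supported on $F$ and the remaining analysis is local on $F$. I would then proceed in three cases. For $\rho=\rho_0$, the rightmost map has image the ideal sheaf $O_Y(-F)\subset O_Y=\cM_{\rho_0}$, and its kernel agrees with the image of the leftmost map, yielding $H^0=O_F$ with the other cohomologies vanishing. For $\rho=\rho_i$ non-trivial special, Wunram's theorem (Theorem~\ref{thm:special mckay corresp}), in particular the identities $c_1(\cM_i)E_j=\delta_{ij}$ together with the global generation of $\cM_i$, concentrates the cohomology in degree $-1$ and identifies it with $O_{E_i}(-1)$, which accounts for the shift $[1]$. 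For non-special $\rho$, Wunram's bijection assigns no exceptional component to $\rho$ and the complex becomes acyclic; an alternative route here is to invoke the adjunction $\Psi\dashv\Phi$ together with the fully faithfulness of $\Phi$ from Theorem~\ref{thm:Ishii fully faithful}, noting that $O_0\otimesk\rho^*$ then lies in the left orthogonal to the image of $\Phi$.

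The main obstacle will be the explicit identification of the cohomology, which requires matching the Koszul differentials -- multiplication by $x,y$ on $p_*O_\cZ$ -- with the intrinsic structural maps between full sheaves dictated by Wunram's correspondence. This local analysis is the substance of \cite{Ishii02}, of which the present lemma is a direct reformulation; closely related techniques appear in \cite{KapranovVasserot}.
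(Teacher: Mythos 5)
The paper offers no proof of this lemma: it simply refers the reader to \cite[\S 5]{Ishii02}, so the only internal ``proof'' is a citation. Your Koszul-resolution setup is the standard route and is correct as far as it goes: the three-term complex
$\cM_{\rho\otimes\det\rho_{\nat}}\to\bigoplus_{\sigma}\cM_{\sigma}\otimesk\Hom_G(\rho,\sigma\otimesk\rho_{\nat}^{*})\to\cM_{\rho}$
in degrees $-2,-1,0$ does compute $\Psi(O_0\otimesk\rho^{*})$ (note that exactness of $p_*$ needs only that $p$ is affine, not flatness of $\cZ$ over $Y$), and the reduction to a sheaf supported on $F$ is right.

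The gap is that the proposal stops exactly where the lemma starts: all three case-by-case identifications are asserted rather than derived. For $\rho=\rho_0$, the claim that the image of the last differential is $O_Y(-F)$ --- equivalently that $[p_*O_{q^{-1}(0)}]^G\simeq O_F$ rather than the structure sheaf of some larger divisor supported on $f^{-1}(0)$ --- is the real content, and it requires Wunram's identity $\rank\cM_{\sigma}=c_1(\cM_{\sigma})F$ together with cohomology vanishing on $F$; none of this is carried out. For non-trivial special $\rho_i$, the concentration of cohomology in degree $-1$ and its identification with $O_{E_i}(-1)$ need the specialness condition $H^1(\cM_i^{\vee})=0$ in an essential way, not merely $c_1(\cM_i)E_j=\delta_{ij}$ and global generation. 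Most seriously, the ``alternative route'' for non-special $\rho$ is circular: since $\Psi$ is left adjoint to $\Phi$, the assertion that $O_0\otimesk\rho^{*}$ lies in the left orthogonal of $\image\Phi$ is by definition \emph{equivalent} to $\Psi(O_0\otimesk\rho^{*})=0$, and full faithfulness of $\Phi$ gives no independent reason why a non-special $\rho$ should satisfy it; the vanishing has to be extracted from the complex itself using $H^1(\cM_{\rho}^{\vee})\neq 0$ for non-special $\rho$. As a roadmap the proposal is accurate and honestly flags where the difficulty lies, but as a proof it ultimately defers the same substantive steps to \cite{Ishii02} that the paper itself does.
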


See \cite[\S~5]{Ishii02} for Lemma~\ref{sublemma:Psi O0rhoi*}.
\begin{lemma}
\label{lemma:Extk(OZy,O0rhoi)} 
Under the same notation as above,  
\begin{align*}
G\text{-}&\Ext^k_{O_U}(O_{Z_y},(O_0\otimesk \rho_i)):=
\Hom^k_{D^G_c(O_U)}(O_{Z_y},(O_0\otimesk \rho_i))
\\
&=
\begin{cases}
\Ext^k_{O_Y}(O_{F},O_y)&(i=0)\\
\Ext^{k-1}_{O_Y}(O_{E_i}(-1),O_y)&(1\leq i\leq m)
\\
0&(i\geq m+1) 
\end{cases}\\
&=\begin{cases}O_y&(i=0, k=0,1, y\in F),\\
O_y&(1\leq i\leq m, k=1,2, y\in E_i),\\
0&(\text{otherwise}).
\end{cases}
\end{align*}
\end{lemma}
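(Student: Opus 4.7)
The plan is to reduce the $G$-equivariant Ext computation on $U$ to an ordinary Ext computation on the smooth resolution $Y$ via the Fourier--Mukai adjunction $\Psi\dashv\Phi$ of Theorem~\ref{thm:Ishii fully faithful}, after which the values are read off from Lemma~\ref{sublemma:Psi O0rhoi*} together with standard short exact sequences on $Y$. The crucial first step is the identification $\Phi(O_y)\simeq O_{Z_y}$ (up to a twist by $\det\rho_{\nat}^{\vee}$ when $G\not\subset\SL(2,k)$), which follows from the explicit kernel of $\Phi$ together with Grothendieck duality along the closed embedding $\cZ\hookrightarrow Y\times U$: since every fibre $Z_y$ is Gorenstein Artinian of length $|G|$, the relative dualizing sheaf $\omega_{\cZ/(Y\times U)}|_{Z_y}=\omega_{Z_y/U}$ is trivialized as a $G$-equivariant $O_U$-module, because the socle character $\det\rho_{\nat}$ of the cluster exactly cancels the character $\det\rho_{\nat}^{\vee}$ of $K_U$.

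With this identification, the adjunction $\Psi\dashv\Phi$ gives, for every $A\in D^G_c(U)$,
\[
\Ext^k_{O_Y}(\Psi(A),O_y)=G\text{-}\Ext^k_{O_U}(A,\Phi(O_y)),
\]
and setting $A=O_0\otimesk\rho_i^{*}$ and applying Lemma~\ref{sublemma:Psi O0rhoi*} produces exactly the three values $\Ext^k_{O_Y}(O_F,O_y)$, $\Ext^{k-1}_{O_Y}(O_{E_i}(-1),O_y)$ and $0$ appearing on the right-hand side of the lemma. To match the left-hand side, I then need to swap the two arguments of $G$-$\Ext$ and replace $\rho_i^{*}$ by $\rho_i$. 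I would do this by combining Serre duality in $D^G_c(U)$,
\[
G\text{-}\Ext^k(A,B)\simeq G\text{-}\Ext^{2-k}(B,A\otimes K_U)^{*},
\]
with the Gorenstein self-duality of the $G$-cluster $O_{Z_y}$: its Matlis dual is $G$-isomorphic to $O_{Z_y}$ up to a twist by the socle character $\det\rho_{\nat}$, and this twist cancels against $K_U\simeq O_U\otimesk\det\rho_{\nat}^{\vee}$ in Serre duality, producing the swap of arguments together with the dualization $\rho_i\leftrightarrow\rho_i^{*}$.

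Finally, to obtain the explicit $O_y$-valued entries in the right-hand column, I would evaluate the two Ext groups on $Y$ directly from short exact sequences. Applying $\Hom_{O_Y}(-,O_y)$ to $0\to O_Y(-F)\to O_Y\to O_F\to 0$, the connecting map $\Hom(O_Y,O_y)\to\Hom(O_Y(-F),O_y)$ vanishes whenever $y\in F$ (the defining section of $F$ kills the class at $y$), while $\Ext^{\ge 1}(O_Y,O_y)=\Ext^{\ge 1}(O_Y(-F),O_y)=0$ wipes out higher degrees; this gives $O_y$ in degrees $0,1$ and zero elsewhere. The same recipe applied to a short exact sequence $0\to O_Y(-E_i-H)\to O_Y(-H)\to O_{E_i}(-1)\to 0$, for any divisor $H$ on $Y$ with $H\cdot E_i=1$, together with $E_i\cong\bP^1$, handles $\Ext^{k-1}(O_{E_i}(-1),O_y)$. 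The main obstacle is the argument-swap in the middle paragraph: tracking the twist by $K_U$ from Serre duality against the socle character of the Gorenstein cluster $O_{Z_y}$ requires care, and its substantive content is precisely the classical identification of the socle character of every $G$-cluster with $\det\rho_{\nat}$.
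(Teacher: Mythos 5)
The paper offers no proof of this lemma beyond the citation to [Ishii02, \S 7], so I am judging your argument on its own terms. Your overall strategy --- reduce the equivariant $\Ext$ on $U$ to an $\Ext$ on $Y$ via the adjunction $\Psi\dashv\Phi$ and Lemma~\ref{sublemma:Psi O0rhoi*}, then evaluate by short exact sequences on $Y$ --- is the right one, and your first and last paragraphs are sound. The middle step, however, contains a genuine gap. A $G$-cluster $Z_y$ with $y$ in the exceptional set is \emph{not} Gorenstein in general: by the very results this paper relies on (Theorem~\ref{thm:local McKay} and the socle description quoted before Theorem~\ref{thm:socle-special}), $\Soc(O_{Z_y})$ contains a copy of $\rho_i\otimes\rho_{\det}$ for every $E_i$ through $y$, so already for $G=\bZ/3\subset\SL(2)$ at the point $y\in E_1\cap E_2$ (where $I_y=\fm^2$) the socle is two-dimensional. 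Consequently neither ``$\omega_{Z_y/U}$ is trivialized as a $G$-equivariant $O_U$-module'' nor ``the Matlis dual of $O_{Z_y}$ is $O_{Z_y}$ up to a character'' holds, and the assertion you single out as the substantive content of your argument --- that the socle character of every $G$-cluster is $\det\rho_{\nat}$ --- is false. What Grothendieck duality along $\cZ\hookrightarrow Y\times U$ actually gives, with no Gorenstein hypothesis, is $\Phi(O_y)\simeq\omega_{Z_y}\simeq O_{Z_y}^{\vee}\otimes_{O_U} K_U[2]$, the shifted, twisted derived dual.

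There is also a bookkeeping problem independent of the Gorenstein issue: equivariant Serre duality sends $\Ext^k(A,B)$ to $\Ext^{2-k}(B,A\otimes K_U)^{*}$, so after one application you sit in degree $2-k$ with a linear dual, and self-duality of $O_{Z_y}$ (even if it held) does not return you to degree $k$; the lemma's two sides both live in degree $k$. The correct, Gorenstein-free replacement for your middle paragraph is the tautological biduality $\Hom_{D^G_c(U)}(A,B^{\vee})\simeq\Hom_{D^G_c(U)}(B,A^{\vee})$ applied with $B=O_{Z_y}$ and $A=O_0\otimesk\rho_i^{*}$, using $(O_0\otimesk\rho_i^{*})^{\vee}\simeq O_0\otimesk\rho_i\otimes K_U^{-1}[-2]$; this preserves the degree and yields exactly $\Hom^k(O_{Z_y},O_0\otimesk\rho_i)$. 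Alternatively, and closer to the machinery the paper itself sets up, avoid the dual kernel altogether: flatness of $O_{\cZ}$ over $Y$ gives $\Phi_{O_\cZ}(O_y)\simeq O_{Z_y}$, so the adjunction $\Phi_{O_\cZ}\dashv\Phi_{O_\cZ}^{*}$ together with Corollary~\ref{cor:Phi(O0rho)} gives $\Hom^k(O_{Z_y},O_0\otimesk\rho_i)\simeq\Ext^k_{O_Y}(O_y,\Phi_{O_\cZ}^{*}(O_0\otimesk\rho_i))$ directly, after which your final paragraph (or Lemma~\ref{lemma:cohom Ext Oy OE}) finishes the computation.
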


See \cite[\S~7]{Ishii02} for the above lemma.
See also Definition \ref{subdefn:number_special}.

\begin{thm}
\label{thm:local McKay}(Local McKay correspondence)\quad 
Let $\fm$ be the maximal ideal of $O_U$ at the origin, 
$y\in Y$ and $Z_y$ the $G$-invariant cluster of $U$ corresponding to $y$ and 
$I_{Z_y}$ the ideal of $O_U$ defining $Z_y$. Then 
the $k[G]$-module $\Gen(I_y):=I_{Z_y}/\fm I_{Z_y}$ is given by
\begin{align*}
&
\begin{cases}
\rho_i\oplus \rho_0&\text{if}\ y\in E_i\setminus \cup_{j\neq i}E_j\\
\rho_i\oplus\rho_j\oplus\rho_0&\text{if}\ y\in E_i\cap E_j,\ i\neq j.
\end{cases}
\end{align*}
\end{thm}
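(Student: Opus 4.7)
The plan is to read off the $k[G]$-decomposition of $\Gen(I_y)=I_{Z_y}/\fm I_{Z_y}$ by computing, for each irreducible $\rho_i\in\Irr(G)$, the multiplicity $[\Gen(I_y):\rho_i]$, using Lemma~\ref{lemma:Extk(OZy,O0rhoi)} as the input. The starting observation is that $O_0\otimesk \rho_i$ is annihilated by $\fm$, so every $G$-equivariant $O_U$-homomorphism $I_{Z_y}\to O_0\otimesk \rho_i$ factors through $\Gen(I_y)$, and Schur's lemma then yields
\[
[\Gen(I_y):\rho_i] \;=\; \dim_k G\text{-}\Hom_{O_U}(I_{Z_y},\,O_0\otimesk \rho_i).
\]
So the task reduces to computing these $G$-Hom groups for each $i$.

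The main step is to apply $G\text{-}\Hom_{O_U}(-,\,O_0\otimesk \rho_i)$ to the tautological short exact sequence
\[
0 \longrightarrow I_{Z_y} \longrightarrow O_U \longrightarrow O_{Z_y} \longrightarrow 0.
\]
Because $O_U$ is free, $G\text{-}\Ext^1_{O_U}(O_U,-)=0$, and the long exact sequence collapses to the four-term sequence
\[
0\to G\text{-}\Hom(O_{Z_y},O_0\otimesk \rho_i)\to \rho_i^G\to G\text{-}\Hom(I_{Z_y},O_0\otimesk \rho_i)\to G\text{-}\Ext^1_{O_U}(O_{Z_y},O_0\otimesk \rho_i)\to 0,
\]
whose outer two terms are computed by Lemma~\ref{lemma:Extk(OZy,O0rhoi)} and whose middle term equals $\delta_{i,0}\,k$.

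The final step is to read off the three cases. For $i=0$ and $y\in F$ the natural map $G\text{-}\Hom(O_{Z_y},O_0)\to \rho_0^G$ is an isomorphism of one-dimensional spaces, since it is induced by the augmentations $O_U\twoheadrightarrow O_{Z_y}\twoheadrightarrow \rho_0$; combining this with $G\text{-}\Ext^1_{O_U}(O_{Z_y},O_0)=k$ from the lemma forces $G\text{-}\Hom(I_{Z_y},O_0)=k$, so $\rho_0$ appears with multiplicity one. For $1\le i\le m$ with $y\in E_i$, both outer Hom terms vanish while $G\text{-}\Ext^1_{O_U}(O_{Z_y},O_0\otimesk \rho_i)=k$, so $\rho_i$ appears with multiplicity one. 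In every remaining case ($\rho_i$ special with $y\notin E_i$, or $\rho_i$ non-special) both outer terms vanish and $\rho_i$ does not appear. Assembling yields $\Gen(I_y)=\rho_0\oplus\bigoplus_{\ell:\,y\in E_\ell}\rho_\ell$, which specializes to $\rho_0\oplus\rho_i$ at a smooth point of $E$ and to $\rho_0\oplus\rho_i\oplus\rho_j$ at a node $E_i\cap E_j$. The only delicate point is the isomorphism assertion in the $i=0$ case (which guarantees $\rho_0$ is not over-counted); everything else is a direct translation of Lemma~\ref{lemma:Extk(OZy,O0rhoi)}.
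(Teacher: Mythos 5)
Your proposal is correct, and it is essentially the argument of \cite[Theorem 7.1]{Ishii02}, which the paper cites in place of giving its own proof: apply $G\text{-}\Hom_{O_U}(-,O_0\otimesk\rho_i)$ to $0\to I_{Z_y}\to O_U\to O_{Z_y}\to 0$ and read off the multiplicities from Lemma~\ref{lemma:Extk(OZy,O0rhoi)} together with $(O_0\otimesk\rho_i)^G=\delta_{i0}k$. The one point you flag as delicate --- that $G\text{-}\Hom(O_{Z_y},O_0)\to\rho_0^G$ is an isomorphism --- is in fact automatic, since precomposition with the surjection $O_U\twoheadrightarrow O_{Z_y}$ is always injective on $\Hom$ and both spaces are one-dimensional by Lemma~\ref{lemma:Extk(OZy,O0rhoi)}.
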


See \cite[Theorem 7.1]{Ishii02}.

\subsection{The ideals $n_Y$ and $I_Y$}\label{subsec:nX}
Let $U=\bA^2_k$, $X=U/G$, $Y=\GHilb(U)$ and 
$f:Y\to X$ the natural morphism, which is the minimal resolution of $X$. 
Let $\pi_Y:Y\times_kU\to Y$ and $\pi_U:Y\times_kU\to U$ 
be the first and the second projection. 
Since $Y\times_{X}X\simeq Y$, {we see that}
$Y$ is a closed subscheme of $Y\times_k X$. 
Let $I_Y$ be the ideal   of 
$O_{Y\times_{k}X}$ defining $Y$, and $n_Y
=I_YO_{Y\times_kU}$. \par
Let $\cZ$ be the universal subscheme of 
$Y\times_{k} U$, and $\cI$ the ideal 
sheaf of $O_{Y\times_{k} U}$ 
defining $\cZ$. 
 We have a commutative diagram of structure sheaves:
\begin{diagram}
&O_{\cZ}&\lTo^{q^*}& O_U \\
&\uTo^{p^*}&&\uTo^{\pi^*}& \\
&O_Y&\lTo^{f^*}&O_X.
\end{diagram}where 
\begin{gather*}
O_Y\simeq O_Y\otimes_{O_X}O_X\simeq O_Y\otimesk  O_X/I_Y,\\
O_Y\otimes_{k}O_X\overset{\id_Y^*\otimes\pi^*}{\to} O_Y\otimes_{k}O_U\overset{
p^*\otimes q^*}{\to} O_{\cZ},\\
O_Y\otimes_{k}O_X/I_Y\simeq O_Y\overset{p^*}{\to} O_{\cZ}=O_Y\otimesk  O_U/\cI.
\end{gather*}

\subsection{The fundamental divisor $F$}
\label{subsec:fund div}
Let $F$ 
be the fundamental divisor of the singularity $(X,0)$ 
(see Subsection~\ref{subsec:minimal resol}).

\begin{sublemma}
\label{sublemma:ideal nY subset cI}
Let $\fm$ be the (maximal) ideal  of $O_{U}$ defining the origin $0$, and  
$I_F$ the ideal of $O_{Y}$ defining the fundamental divisor $F$.
Then
\begin{enumerate}
\item $O_{Y\times_XU}\simeq O_Y\otimesk O_U/\fn_Y$, that is, 
$\fn_Y$ is the ideal of $O_Y\otimesk O_U$ 
defining the fiber product $Y\times_XU$. 
\item $\cZ\subset Y\times_XU$, that is, $\fn_Y\subset \cI$,
\item $\pi_U^*\fm+\fn_Y=\pi_U^*\fm+I_FO_{Y\times_k U}
=\pi_U^*\fm+\cI$,
\item $\fm\cI+\fn_Y\cI=\fm\cI+\cI^2=\fm\cI+I_F\cI$. 
\end{enumerate}
\end{sublemma}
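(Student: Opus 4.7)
The plan is to handle the four parts in order. Part (1) just unravels definitions: the graph $Y\subset Y\times_kX$ of $f$ is cut out by $I_Y=(f^*g\otimes 1-1\otimes g:g\in O_X)$, and pulling these back along $\pi^*$ in the $X$-factor produces precisely the generators of $\fn_Y$, so $O_Y\otimesk O_U/\fn_Y=O_Y\otimes_{O_X}O_U=O_{Y\times_XU}$. Part (2) follows from the commutativity $f\circ p=\pi\circ q$ of \eqref{diag:comm diag for cZ}: the composite $\cZ\to Y\times_kU\to Y\times_kX$ factors through the graph $Y$, whence $\cZ\subset Y\times_XU$ and $\fn_Y\subset\cI$. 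For the first equality of (3), I will examine each generator $f^*g\otimes 1-1\otimes\pi^*g$ of $\fn_Y$ modulo $\pi_U^*\fm$: since $\pi(0)=0_X$, for $g\in\mathfrak{m}_X$ the second term lies in $\pi_U^*\fm$, leaving $f^*g\otimes 1$, and the totality spans $f^*\mathfrak{m}_X\cdot O_Y=\mathfrak{m}_XO_Y$, which by Artin's theorem for rational surface singularities equals $I_F$.

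The heart of the argument is the second equality of (3). The inclusion $\supset$ is automatic from (2) and the first equality. For the reverse, after applying $\pi_{Y*}$ it suffices to show that the $O_Y$-torsion submodule $T$ of $\pi_{Y*}O_{Y\times_XU}=f^*\pi_*O_U$ maps to zero in the quotient $f^*\pi_*O_U/\pi_U^*\fm\cdot f^*\pi_*O_U=O_F$ (again using $\mathfrak{m}_XO_Y=I_F$). The identification $\pi_{Y*}(\cI/\fn_Y)=T$ comes from Lemma~\ref{sublemma:direct image of OZ and OU}~(3), which describes $p_*O_{\cZ}$ as the torsion-free pullback of $\pi_*O_U$. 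The key observation will be that $G$ acts trivially on $O_F$ (it descends from $O_X$), so in characteristic coprime to $|G|$ the image of the $G$-equivariant map $T\to O_F$ agrees with the image of its $G$-invariants $T^G$. Decomposing $\pi_*O_U=\bigoplus_\rho M_\rho\otimesk\rho$, the trivial isotype is $M_{\rho_0}=(O_U)^G=O_X$, whose pullback $f^*O_X=O_Y$ is torsion-free; hence $T^G=0$ and the map vanishes.

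Part (4) follows at once by multiplying both equalities of (3) by $\cI$. The hard part will be the reverse inclusion in (3)'s second equality, which pulls together Lemma~\ref{sublemma:direct image of OZ and OU}~(3), Artin's equality $\mathfrak{m}_XO_Y=I_F$, and the $G$-invariance argument to rule out contributions from non-trivial isotypes.
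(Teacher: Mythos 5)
Your proof is correct, and parts (1), (2), the first equality of (3) (via the generators $f^*a\otimes 1-1\otimes\pi^*a$ and Artin's $\fm_XO_Y=I_F$), and (4) coincide with the paper's argument. Where you genuinely diverge is the second equality of (3). The paper proves $\pi_U^*\fm+I_FO_{Y\times_kU}=\pi_U^*\fm+\cI$ by computing both quotients of $O_{Y\times_kU}$ and identifying each with $O_F$, the one for $\cI$ via the derived-category computation $\Psi(O_0\otimesk\rho_0^*)=O_F$ imported from Ishii's work (Lemma~\ref{sublemma:Psi O0rhoi*}); one then concludes because a surjection $O_F\to O_F$ is an isomorphism. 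You instead push everything down to $Y$, identify $\pi_{Y*}(\cI/\fn_Y)$ with the torsion submodule $T$ of $f^*\pi_*O_U$ using Lemma~\ref{sublemma:direct image of OZ and OU}~(3), and kill the image of $T$ in $O_F=f^*\pi_*O_U/(x,y)$ by Schur's lemma: $O_F$ carries the trivial $G$-action, so only $T^G=\op{tors}(f^*M_{\rho_0})=\op{tors}(O_Y)=0$ can contribute. This is more elementary and self-contained -- it needs only flatness of $\cZ$ over $Y$, the identity $f_*p_*(O_\cZ)\simeq\pi_*(O_U)$, and semisimplicity of $k[G]$, rather than the $\Psi$-computation -- and as a byproduct it re-derives the degree-zero statement that $p_*(O_\cZ\otimes_{O_{Y\times_kU}}\pi_U^*(O_U/\fm))\simeq O_F$. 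The paper's route is shorter given that Lemma~\ref{sublemma:Psi O0rhoi*} is already available. Both arguments rest on Artin's theorem $\fm_XO_Y=I_F$ at the same point.
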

\begin{proof} Let $\fm_X$ be the (maximal) ideal of $O_X$ 
defining the singular point  $0$. 
The ideal $I_Y$ of $O_Y\otimesk O_X$ is 
generated by $(f^*a)\otimes 1-1\otimes a$  for $a\in \fm_X$.
Therefore 
$\fn_Y$ is  generated by 
$(f^*a)\otimes 1-1\otimes(\pi^*a)$ for $a\in \fm_X$. 
Hence $O_Y\otimesk O_U/\fn_Y$ is the structure sheaf of the fiber product 
$Y\times_XU$.  Since $\cZ\subset Y\times_XU$, we have 
$I_Y\subset\cI$, so that 
$\fn_Y\subset\cI$. This proves part (2).
\par
The inclusion $\pi_U^*\fm+\fn_Y\subset \pi_U^*\fm+\cI$ is clear.  
We shall prove the converse.  Since $\fn_Y$ is generated by 
$(f^*a)\otimes 1-1\otimes(\pi^*a)$ for $a\in \fm_X$, we have 
$(f^*a)\otimes 1\in \pi_U^*\fm+\fn_Y$ because $\pi^*a\in \fm$. 
Thus $\pi_U^*\fm+\fn_Y=\pi_U^*\fm+I_FO_{Y\times U}$. 
We see
\begin{align*}
O_{Y\times_k U}/(\pi_U^*\fm+I_FO_{Y\times_k U})&
\simeq (O_{Y}/I_F)\otimes_{O_{Y\times_k U}} (O_U/\fm)=O_F,\\
O_{Y\times_k U}/(\pi_U^*\fm+\cI)&\simeq 
(O_{Y\times_k U}/\cI)\otimes_{O_{Y\times_k U}} (O_U/\fm)\\
&\simeq O_{\cZ}\otimes_{O_{Y\times_k U}} (O_U/\fm)=O_F
\end{align*} 
by Lemma~\ref{sublemma:Psi O0rhoi*}~(2). 
It follows that 
$$\pi_U^*\fm+I_FO_{Y\times U}=\pi_U^*\fm+\cI.
$$ 

This proves Lemma.
\end{proof}

\begin{sublemma}
\label{sublemma:ideal rel}Let $\fn:=(\fm\cap k[x,y]^G)O_U$. 
Then the following is true:
\begin{enumerate}
\item $I_{Z_y}\simeq\cI/\fm_y\cI\simeq
\left(\cI/\cI\cap (\fm_y\otimesk O_U)\right)$,
\item $\cI\cap (\fm_y\otimesk O_U)=\fm_y\cI$,
\item $\fn_Y+\fm_y\otimesk O_U=O_Y\otimesk \fn+\fm_y\otimesk O_U$ if $y\in F$, 
\item $(\fn_Y+\fm_y\cI)/\fm_y\cI\simeq \fn$ if $y\in F$.
\end{enumerate}
\end{sublemma}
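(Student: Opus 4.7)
The plan is to exploit the flatness of the universal $G$-cluster $p:\cZ\to Y$ together with the explicit generators of $\fn_Y$ collected in Subsection~\ref{subsec:nX}.

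I would first dispatch (2) by flatness. Since $\cZ\to Y$ is finite flat, the quotient $O_{\cZ}=O_{Y\times_k U}/\cI$ is $O_Y$-flat, so tensoring the short exact sequence $0\to\cI\to O_{Y\times_k U}\to O_{\cZ}\to 0$ over $O_Y$ with $O_Y/\fm_y$ remains exact. This yields an injection $\cI/\fm_y\cI\hookrightarrow O_{Y\times_k U}/(\fm_y\otimesk O_U)$, whose target is canonically $O_U$. Since this map factors through the natural surjection $\cI/\fm_y\cI\twoheadrightarrow \cI/(\cI\cap(\fm_y\otimesk O_U))$, both arrows must be isomorphisms; this simultaneously proves (2), gives the first isomorphism in (1), and identifies the image inside $O_U$ with $I_{Z_y}$ (the cokernel being $O_{Z_y}$), completing (1).

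Next I turn to (3) by chasing the generators $f^*(a)\otimes 1-1\otimes\pi^*(a)$ of $\fn_Y$ with $a$ in the maximal ideal $\fm_X$ of $O_X$ at the singular point. For $y\in F=f^{-1}(0)$ the value $(f^*a)(y)$ vanishes, so $f^*(a)\otimes 1\in\fm_y\otimesk O_U$, and each such generator reduces to $-1\otimes\pi^*(a)$ modulo $\fm_y\otimesk O_U$. Since the collection $\{1\otimes\pi^*(a):a\in\fm_X\}$ generates $O_Y\otimesk \fn$ as an ideal of $O_Y\otimesk O_U$, reading the congruence in both directions gives the claimed equality of ideals modulo $\fm_y\otimesk O_U$.

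Finally, for (4), I would transport $(\fn_Y+\fm_y\cI)/\fm_y\cI$ across the isomorphism $\cI/\fm_y\cI\simeq I_{Z_y}$ established in (1), which is induced by restricting sections of $\cI$ to the fiber $\{y\}\times U\cong U$. Under this restriction the generator $f^*(a)\otimes 1-1\otimes\pi^*(a)$ of $\fn_Y$ maps to $-\pi^*(a)$, so the image of $\fn_Y$ in $I_{Z_y}$ is the ideal $\pi^*(\fm_X)O_U=\fn$, which is exactly (4). The main obstacle is the flatness input underlying (2): one must know it is strong enough to separate $\fm_y\cI$ from the potentially larger ideal $\cI\cap(\fm_y\otimesk O_U)$. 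Once (2) is in hand, parts (1), (3), and (4) follow formally from the explicit description of $\fn_Y$ at points of the exceptional set.
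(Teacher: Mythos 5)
Your proof is correct and follows essentially the same route as the paper: flatness of $O_{\cZ}$ over $O_Y$ to identify $\cI/\fm_y\cI$ with $I_{Z_y}$ and force $\cI\cap(\fm_y\otimesk O_U)=\fm_y\cI$, then the generators $f^*(a)\otimes 1-1\otimes\pi^*(a)$ of $\fn_Y$ vanishing in the first factor at $y\in F$ to handle (3) and (4). The only cosmetic difference is that you prove (1) and (2) in one stroke via the factorization of the injection $\cI/\fm_y\cI\hookrightarrow O_U$, and you obtain (4) by computing the image of $\fn_Y$ directly rather than through the paper's chain of isomorphisms built from (2) and (3).
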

\begin{proof}
Since $O_{\cZ}$ is $O_Y$-flat, 
\begin{align*}
O_{Z_y}&\simeq O_{\cZ}\otimes_{O_Y}O_y=(O_{Y\times_k U}/\cI)\otimes_{O_Y}O_y\\
&\simeq (O_Y\times_kO_U)/(\cI+\fm_y\otimesk O_U)\\
&\simeq (O_y\times_kO_U)/\left(\cI/\cI\cap (\fm_y\otimesk O_U)\right),
\end{align*}whence 
$I_{Z_y}\simeq \cI/\cI\cap (\fm_y\otimesk O_U)$. 
This proves the part (1).

Since $O_{\cZ}$ is $O_Y$-flat again, the following is an exact 
sequence:
$$0\to \cI\otimes_{O_{Y\times_k U}}O_y\to O_y\otimesk O_U\ (\simeq O_U)\to 
O_{Z_y}(\simeq O_{\cZ}\otimes_{O_{Y\times_k U}} O_y)\to 0,
$$so that we have $I_{Z_y}=\cI/\fm_y\cI$. Hence we have
\begin{align*}
\cI\cap(\fm_y\otimesk O_U)=\fm_y\cI,
\end{align*}which proves the part (2).
Since $\fn_Y$ is generated by $f^*a\otimes 1-1\otimes\pi^*a$ 
for $a\in \fm_X$ and 
$\pi^*\fm_X=\fn$,  
we have $\fn_Y+\fm_y\otimesk O_U=O_Y\otimesk \fn+\fm_y\otimesk O_U$ 
for $y\in F$, which is the part (3). 
It follows that  
\begin{align*}
(\fn_Y+\fm_y\cI)/\fm_y\cI&\simeq \fn_Y/\fn_Y\cap\cI\cap (\fm_y\otimesk O_U)\\
&= \fn_Y/\fn_Y\cap(\fm_y\otimesk O_U)\quad\text{by Lemma~\ref{sublemma:ideal nY subset cI}~(2)}\\
&\simeq  \fn_Y+\fm_y\otimesk O_U/(\fm_y\otimesk O_U)\\
&\simeq  (O_Y\otimesk \fn+\fm_y\otimesk O_U)/(\fm_y\otimesk O_U)\simeq  \fn.
\end{align*} This completes the proof. 
\end{proof}

\begin{subdefn}We define
\begin{align*}
\cV:&=\cI/(\fm\cI+\fn_Y),\\ 
\cV^{\dagger}:&=\cI/(\fm\cI+I_F\cI)\simeq 
(\cI/\fm\cI)\otimes_{O_Y}O_F.
\end{align*}
\end{subdefn}

\begin{sublemma}
\label{sublemma:cV and cVdagger} For $y\in F$, 
\begin{gather*}
\cV\otimes_{O_Y} O_y 
\simeq I_{Z_y}/(\fm I_{Z_y}+\fn);\
\cV^{\dagger}\otimes_{O_Y} O_y
\simeq I_{Z_y}/\fm I_{Z_y}.
\end{gather*}
\end{sublemma}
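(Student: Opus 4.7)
\medskip

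\noindent\textbf{Proof proposal.} Both isomorphisms will follow by tensoring the defining presentations of $\cV$ and $\cV^{\dagger}$ with the residue field $O_y = O_Y/\fm_y$, and then invoking the already-established parts (1)--(4) of Lemma~\ref{sublemma:ideal rel}. The only substantive input is part~(4), which identifies the image of $\fn_Y$ in $I_{Z_y}=\cI/\fm_y\cI$ with the ideal $\fn \subset O_U$; everything else is bookkeeping with right-exactness of $\otimes_{O_Y} O_y$.

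For $\cV^{\dagger}$, I would first tensor $\cV^{\dagger}=\cI/(\fm\cI+I_F\cI)$ with $O_y$ to obtain
$$\cV^{\dagger}\otimes_{O_Y}O_y \;\simeq\; \cI/(\fm\cI + I_F\cI + \fm_y\cI).$$
Since $y\in F$, one has $I_F \subset \fm_y$ as ideals of $O_Y$, hence $I_F\cI \subset \fm_y\cI$, so this simplifies to $\cI/(\fm\cI+\fm_y\cI)$. Using $I_{Z_y}\simeq\cI/\fm_y\cI$ from Lemma~\ref{sublemma:ideal rel}(1), the quotient equals $I_{Z_y}/\fm I_{Z_y}$, as required.

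For $\cV$, the same tensoring yields
$$\cV\otimes_{O_Y} O_y \;\simeq\; \cI/(\fm\cI+\fn_Y+\fm_y\cI).$$
I would then pass from $\cI$ to $I_{Z_y}=\cI/\fm_y\cI$ via Lemma~\ref{sublemma:ideal rel}(1); the image of $\fm\cI$ becomes $\fm I_{Z_y}$, and by Lemma~\ref{sublemma:ideal rel}(4) the image $(\fn_Y+\fm_y\cI)/\fm_y\cI$ of $\fn_Y$ in $I_{Z_y}$ is precisely $\fn$. The resulting quotient is therefore $I_{Z_y}/(\fm I_{Z_y}+\fn)$.

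The only real point to check carefully, and the thing that could be considered the main obstacle if one had to do it from scratch, is the identification in part~(4) of Lemma~\ref{sublemma:ideal rel}, together with the containment $\fn \subset I_{Z_y}$ for $y\in F$ (which is what makes writing ``$\fn$'' inside $I_{Z_y}/\fm I_{Z_y}$ meaningful); the latter comes from the facts that $\cZ\subset Y\times_X U$ (Lemma~\ref{sublemma:ideal nY subset cI}(2)) and that $\pi^*\fm_X = \fn$, so that $\fn$ annihilates every $G$-cluster supported over the singular point. Both ingredients are already in hand, so the remaining argument is essentially a two-line diagram chase.
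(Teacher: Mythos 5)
Your proposal is correct and follows essentially the same route as the paper: tensor the defining presentations with $O_y$, use $I_{Z_y}\simeq\cI/\fm_y\cI$, absorb $I_F\cI$ into $\fm_y\cI$ for $\cV^{\dagger}$, and invoke Lemma~\ref{sublemma:ideal rel}~(4) to identify the image of $\fn_Y$ with $\fn$ for $\cV$. The extra remark about why $\fn\subset I_{Z_y}$ is a sensible sanity check but does not change the argument.
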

\begin{proof}
Let $y\in Y$ and $\fm_y$ the maximal ideal of $O_{Y,y}$. 
Hence $O_y=O_{Y,y}/\fm_y=O_y$. Since $I_{Z_y}=\cI/\fm_y\cI$, 
we have 
\begin{align*}
\cV\otimes_{O_Y} O_y&\simeq 
(\cI/(\fm\cI+\fn_Y))\otimes_{O_Y} O_y=\cI/(\fm\cI+\fn_Y+\fm_y\cI)\\
&\simeq (\cI/\fm_y\cI)/\left(\fm(\cI/\fm_y\cI)
+(\fn_Y+\fm_y\cI)/\fm_y\cI)\right)\\
&\simeq I_{Z_y}/(\fm I_{Z_y}+\fn),\\
\cV^{\dagger}\otimes_{O_Y} O_y
&=\cI/(\fm\cI+I_F\cI+\fm_y\cI)=\cI/(\fm\cI+\fm_y\cI)\\
&\simeq (\cI/\fm_y\cI)/(\fm\cI+\fm_y\cI)/\fm_y\cI)\\
&\simeq (\cI/\fm_y\cI)/m(\cI/\fm_y\cI)\simeq I_{Z_y}/\fm I_{Z_y}.
\end{align*}
\end{proof}

\begin{defn}
For a coherent $O_{Y\times U}$-module $J$,
we define a functor 
\begin{gather*}
\Psi_J:D_c^G(U)\to D_c(Y)
\end{gather*}
by
\begin{align*}
\Psi_J(A)&=[p_*(\bL \pi_U^*(A)\overset{\bL}{\otimes}_{O_{Y\times_k U}}J)]^G
=\bR (\pi_{Y})_*(\bL\pi_U^*(A)\overset{\bL}
{\otimes}_{O_{Y\times_k U}}J)]^G
\end{align*}where $A\in D_c^G(U)$.  
Note that $\Psi=\Psi_{O_{\cZ}}$.
\end{defn}

\begin{lemma}
\label{lemma:PsicI}
The following is true:
\begin{enumerate}
\item 
$\Psi(O_0\otimesk\rho_i^*)=\begin{cases}
O_{F}&(i=0),\\
O_{E_i}(-1)[1]& (\text{$\rho_i$ : non-trivial special}),
\end{cases}$
\item $\Psi_{O_{Y\times_k U}}(O_0\otimesk\rho_i^*)=
\begin{cases}O_{Y} & (i=0),\\
0&(i\neq 0),
\end{cases}$
\item $\Psi_{\cI}(O_0\otimesk\rho_i^*)=
\begin{cases}
O_{Y}(-F)&(i=0) ,\\
O_{E_i}(-1)&(i\neq 0),\\
0&\text{(otherwise).}
\end{cases}$ 
\end{enumerate}
\end{lemma}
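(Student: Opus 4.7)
The plan is to exploit the structure sequence $0 \to \cI \to O_{Y\times_k U} \to O_{\cZ} \to 0$, which, since the tensor product in the definition of $\Psi_J$ is derived, yields for every $A\in D^G_c(U)$ a distinguished triangle
$$\Psi_{\cI}(A) \to \Psi_{O_{Y\times_k U}}(A) \to \Psi_{O_{\cZ}}(A) \to \Psi_{\cI}(A)[1]$$
in $D_c(Y)$. Note that $\Psi_{O_{\cZ}}=\Psi$, so part (1) is a direct restatement of Lemma~\ref{sublemma:Psi O0rhoi*}; the work reduces to computing $\Psi_{O_{Y\times_k U}}(O_0\otimesk \rho_i^*)$ and then feeding the three inputs into the long exact sequence.

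For part (2), I would use that $\pi_U$ is flat (being a projection), so $\bL\pi_U^*(O_0\otimesk \rho_i^*) = \pi_U^*(O_0)\otimesk \rho_i^* = O_{Y\times\{0\}}\otimesk \rho_i^*$, viewed as a $G$-sheaf with trivial $G$-action on $O_{Y\times\{0\}}$. The projection $\pi_Y$ restricts to an isomorphism $Y\times\{0\}\xrightarrow{\sim}Y$, so $\bR(\pi_Y)_*$ just returns $O_Y\otimesk \rho_i^*$. Taking $G$-invariants yields $O_Y\otimesk (\rho_i^*)^G$, which is $O_Y$ when $i=0$ and $0$ otherwise; this is exactly (2).

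For part (3), I would plug the outputs of (1) and (2) into the triangle above. When $i=0$, the triangle reads $\Psi_{\cI}(O_0)\to O_Y\to O_F$, which is the standard exact sequence $0\to O_Y(-F)\to O_Y\to O_F\to 0$ (by Lemma~\ref{sublemma:ideal nY subset cI}), identifying $\Psi_{\cI}(O_0)\simeq O_Y(-F)$. When $\rho_i$ is non-trivial special, the middle term vanishes, so the connecting morphism forces
$$\Psi_{\cI}(O_0\otimesk \rho_i^*)\simeq \Psi(O_0\otimesk \rho_i^*)[-1]\simeq O_{E_i}(-1).$$
When $\rho_i$ is non-trivial and non-special, both the middle and right terms vanish, and therefore so does $\Psi_{\cI}(O_0\otimesk \rho_i^*)$.

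The only real obstacle is verifying that $\Psi_J$ genuinely transforms short exact sequences in the $J$-variable into distinguished triangles; this follows because all tensor products in the definition are derived, and because taking $G$-invariants commutes with the triangulated structure (as $|G|$ is invertible in $k$, so $(-)^G$ is exact). Once that is granted, the computation is pure bookkeeping.
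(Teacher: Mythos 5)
Your proposal is correct and follows essentially the same route as the paper: part (1) is quoted from Lemma~\ref{sublemma:Psi O0rhoi*}, part (2) is the same direct computation using flatness of the projection, and part (3) is extracted from the long exact sequence attached to $0\to\cI\to O_{Y\times_k U}\to O_{\cZ}\to 0$ exactly as in the paper. Your added remark identifying $O_Y\to O_F$ with the natural restriction in the case $i=0$ is a detail the paper leaves implicit, but it does not change the argument.
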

\begin{proof}The part (1) follows from Lemma~\ref{sublemma:Psi O0rhoi*}.
Since $O_{Y\times_k U}$ is $O_{Y\times_k U}$-flat, by definition, 
\begin{align*}
\Psi_{O_{Y\times_k U}}(O_0\otimesk\rho_i^*)
&=[p_*(O_{Y}\otimesk (O_0\otimesk\rho_i^*))]^G\\
&=[O_{Y}\otimesk\rho_i^*]^G=
\begin{cases}O_{Y}&(i=0)\\
0&(i\neq 0)
\end{cases}
\end{align*}

The part (3) follows from the parts (1) and (2) and the exact sequence:
\begin{diagram}
&\rTo&\Psi^{-2}_{\cI}(O_0\otimesk \rho_0^*)
&\rTo&\Psi^{-2}_{O_{Y\times_k U}}(O_0\otimesk \rho_0^*)
&\rTo&\Psi^{-2}_{O_{\cZ}}(O_0\otimesk \rho_0^*)\\
&\rTo&\Psi^{-1}_{\cI}(O_0\otimesk \rho_0^*)&
\rTo&\Psi^{-1}_{O_{Y\times_k U}}(O_0\otimesk \rho_0^*)&\rTo
&\Psi^{-1}_{O_{\cZ}}(O_0\otimesk \rho_0^*)\\
&\rTo&\Psi^0_{\cI}(O_0\otimesk \rho_0^*)&\rTo&
\Psi^{0}_{O_{Y\times_k U}}(O_0\otimesk \rho_0^*)&\rTo
&\Psi^{0}_{O_{\cZ}}(O_0\otimesk \rho_0^*)&\rTo&0.
\end{diagram}

This proves the part (3).
\end{proof}

The following is a {global version (global over the exceptional set)} of Theorem \ref{thm:local McKay}:  
\begin{thm}\label{thm:mckay isom}
{There are isomorphisms}
\begin{gather*}
\cV\simeq 
\sum_{\rho_i\neq\rho_0}O_{E_i}(-1)\otimesk \rho_i,\quad  
\cV^{\dagger}\simeq \cV\oplus O_F(-F)\otimesk \rho_0,
\end{gather*}
where $\rho_i$ 
ranges over all non-trivial special irreducible representations 
of $G$. 
\end{thm}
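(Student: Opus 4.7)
The plan is to deduce Theorem~\ref{thm:mckay isom} by first computing the $G$-equivariant structure of $\cI/\fm\cI$ on $Y\simeq Y\times\{0\}$, then restricting to $F$ to obtain $\cV^{\dagger}$, and finally extracting $\cV$ as a quotient of $\cV^{\dagger}$ via the local McKay correspondence. The key input is Lemma~\ref{lemma:PsicI}(3), which identifies $\Psi_{\cI}(O_{0}\otimesk\rho^{*})$ as the sheaf $O_{Y}(-F)$, $O_{E_{i}}(-1)$, or $0$ according as $\rho$ is trivial, non-trivial special, or non-special, and which asserts that $\Psi_{\cI}(O_{0}\otimesk\rho^{*})$ is concentrated in degree $0$ in every case.

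First I would compute $\cI/\fm\cI$. Since it is supported on $Y\times\{0\}$ and $|G|$ is coprime to $\op{char}k$, complete reducibility gives an isotypic decomposition $\cI/\fm\cI=\bigoplus_{\rho}(\cI/\fm\cI)_{\rho}\otimesk\rho$ with $(\cI/\fm\cI)_{\rho}=[(\cI/\fm\cI)\otimesk\rho^{*}]^{G}$. Because $\pi_{U}$ is flat and $\pi_{Y}$ restricts to an isomorphism on $Y\times\{0\}$, a direct unwinding shows $(\cI/\fm\cI)_{\rho}=H^{0}\Psi_{\cI}(O_{0}\otimesk\rho^{*})$; by Lemma~\ref{lemma:PsicI}(3) this derived object is itself concentrated in degree $0$. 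This yields the decomposition of $\cI/\fm\cI$, and the formula for $\cV^{\dagger}=(\cI/\fm\cI)\otimes_{O_{Y}}O_{F}$ then follows by noting $O_{Y}(-F)\otimes_{O_{Y}}O_{F}=O_{F}(-F)$ and that each $O_{E_{i}}(-1)$ is already an $O_{F}$-module because $I_{F}$ annihilates it ($E_{i}\subset F$).

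For $\cV$, the inclusion $\fn_{Y}\cdot\cI\subset\fn_{Y}$ together with Lemma~\ref{sublemma:ideal nY subset cI}(4) yields a $G$-equivariant surjection of $O_{F}$-modules $\cV^{\dagger}\twoheadrightarrow\cV$, and I claim its kernel is precisely the $\rho_{0}$-summand $O_{F}(-F)\otimesk\rho_{0}$. To verify this, pass to fibers via Lemma~\ref{sublemma:cV and cVdagger}: $\cV\otimes O_{y}=I_{Z_{y}}/(\fm I_{Z_{y}}+\fn)$ for $y\in F$. Since $I_{Z_{y}}/\fm I_{Z_{y}}$ is a $k=O_{U}/\fm$-vector space, elements of $\fn$ act on it only through their residues modulo $\fm$, so the image of $\fn$ is spanned by the residues of the $G$-invariant generators $\fm_{X}=I_{Z_{y}}^{G}$ of $\fn$; hence the image is contained in the $\rho_{0}$-isotypic. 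Conversely, the exactness of taking $G$-invariants (Maschke) applied to $0\to\fm I_{Z_{y}}\to I_{Z_{y}}\to I_{Z_{y}}/\fm I_{Z_{y}}\to 0$ shows $\fm_{X}=I_{Z_{y}}^{G}$ surjects onto $(I_{Z_{y}}/\fm I_{Z_{y}})^{G}$, which is $1$-dimensional by Theorem~\ref{thm:local McKay}. Therefore $(\cV)_{\rho_{0}}\otimes O_{y}=0$ for every $y\in F$, so Nakayama gives $(\cV)_{\rho_{0}}=0$. The induced surjection $\bigoplus_{i}O_{E_{i}}(-1)\otimesk\rho_{i}\twoheadrightarrow\cV$ is then a fiberwise isomorphism on $F$ by Theorem~\ref{thm:local McKay}, and therefore an isomorphism of sheaves by Nakayama applied to its kernel.

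The main obstacle is the fiberwise analysis in the last step: combining Maschke's exactness with Theorem~\ref{thm:local McKay} to pin down the image of $\fn$ in $I_{Z_{y}}/\fm I_{Z_{y}}$ as exactly its $\rho_{0}$-isotypic, and then invoking Nakayama both to produce $(\cV)_{\rho_{0}}=0$ globally and to upgrade the fiberwise isomorphism $\bigoplus_{i}O_{E_{i}}(-1)\otimesk\rho_{i}\to\cV$ to a genuine isomorphism of coherent sheaves on $F$.
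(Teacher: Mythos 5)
Your proposal is correct and follows essentially the same route as the paper: compute $\cI/\fm\cI$ isotypically via $\Psi_{\cI}$ (Lemma~\ref{lemma:PsicI}), restrict to $F$ to get $\cV^{\dagger}$, and then kill exactly the $\rho_0$-summand using the fact that $\fn$ is generated by $G$-invariants together with Lemma~\ref{sublemma:cV and cVdagger} and Theorem~\ref{thm:local McKay}. Your treatment of the last step (Maschke to pin the image of $\fn$ to the $\rho_0$-isotypic part, then passing from fibers to sheaves) is in fact slightly more detailed than the paper's own wording, but the content is the same.
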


\begin{cor}
\label{cor:global local isom} Let $\fn=(\fm\cap k[x,y]^G)O_U$. Then
{the fibers of $\cV$ and $\cV^{\dagger}$ over $y \in F$ are}
\begin{align*}
\cV\otimes_{O_Y} O_y&=
\begin{cases}
\rho_i&(y\in E_i\setminus \cup_{j\neq i}E_j)\\
\rho_i\oplus\rho_j&(y\in E_i\cap E_j,\ i\neq j), 
\end{cases}\\
\cV^{\dagger}\otimes_{O_Y} O_y&=
\begin{cases}
\rho_i\oplus \rho_0&(y\in E_i\setminus \cup_{j\neq i}E_j)\\
\rho_i\oplus\rho_j\oplus\rho_0&(y\in E_i\cap E_j,\ i\neq j).
\end{cases}
\end{align*}
\end{cor}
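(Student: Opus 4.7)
The plan is to derive this as a direct fiber-wise consequence of Theorem~\ref{thm:mckay isom}, which has already identified $\cV$ and $\cV^{\dagger}$ globally on $Y$. The corollary amounts to computing the pointwise fibers of the two invertible-sheaf summands appearing there, so I would first tensor the isomorphism
$$\cV \simeq \bigoplus_{\rho_i\neq \rho_0}O_{E_i}(-1)\otimesk \rho_i$$
with the residue field $O_y=k(y)$ at $y\in F$, using that tensoring over $O_Y$ commutes with the $k$-linear tensor by the finite-dimensional representations $\rho_i$ and with finite direct sums.

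The key per-summand fact is that each $E_i$ is a smooth rational curve and $O_{E_i}(-1)$ is an invertible $O_{E_i}$-module, pushed forward to $Y$. Its stalk at $y$ is therefore a rank-one free $O_{E_i,y}$-module when $y\in E_i$ and is zero when $y\notin E_i$; reducing mod $\fm_y$ gives the fiber
$$O_{E_i}(-1)\otimes_{O_Y}O_y\simeq\begin{cases}k & (y\in E_i),\\ 0 & (y\notin E_i).\end{cases}$$
Splitting into the two cases $y\in E_i\setminus\bigcup_{j\neq i}E_j$ and $y\in E_i\cap E_j$ (with $i\neq j$) then immediately produces the stated decompositions for $\cV\otimes_{O_Y}O_y$.

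For $\cV^{\dagger}$, it remains to compute the fiber of the extra summand $O_F(-F)\otimesk \rho_0$. Since $F$ is an effective Cartier divisor on the smooth surface $Y$, the sheaf $O_Y(-F)$ is invertible, so $O_F(-F)=O_Y(-F)\otimes_{O_Y}O_F$ is an invertible $O_F$-module (the conormal sheaf to $F$). At any $y$ in the support of $F$ we therefore have
$$O_F(-F)\otimes_{O_Y}O_y\simeq O_{F,y}/\fm_{Y,y}O_{F,y}\simeq O_{F,y}/\fm_{F,y}\simeq k,$$
carrying the trivial $G$-action, i.e.\ a copy of $\rho_0$. Adding this to the previous fiber of $\cV$ gives the formulas for $\cV^{\dagger}\otimes_{O_Y}O_y$.

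There is no real obstacle here: once Theorem~\ref{thm:mckay isom} is in hand, everything reduces to the fiber computation of two invertible sheaves on curves inside $Y$. The only mild subtlety is the possible non-reduced structure of $F$ (whose support is $E=F_{\red}$), but because we are only restricting an invertible sheaf on $F$ to a residue field of $Y$, this multiplicity plays no role in the final fiber.
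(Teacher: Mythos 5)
Your proposal is correct and follows essentially the same route as the paper: the paper likewise obtains the corollary by tensoring the global decomposition of Theorem~\ref{thm:mckay isom} (via $\cI/\fm\cI$ and Lemma~\ref{sublemma:cV and cVdagger}) with $O_y$ and reading off the fibers of the summands $O_{E_i}(-1)$ and $O_F(-F)$. Your explicit per-summand evaluation ($k$ if $y$ lies on the curve, $0$ otherwise, with the $\rho_i$ factor carried along $k$-linearly) and the remark that the non-reduced structure of $F$ is irrelevant for the fiber are exactly the points the paper leaves implicit.
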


We note $E_i=C(\rho_i)$ and 
$\Gen(I_{Z_y})=I_{Z_y}/\fm I_{Z_y}$ in Theorem \ref{thm:thm}.

\begin{proof}[Proof of Theorem~\ref{thm:mckay isom} and  
Corollary~\ref{cor:global local isom}]

By Lemma~\ref{lemma:PsicI}, we have 
\begin{align*}
\cI/\fm \cI&
=p_*(\bL \pi_U^*(O_0)\overset{\bL}{\otimes}_{O_{Y\times_k U}}\cI)
\simeq\sum_{\rho:\text{irred.}}\Psi_{\cI}(O_0\otimesk \rho^*)\otimes\rho\\
&\simeq 
\bigoplus_{i=1}^nO_{E_i}(-1)\otimesk \rho_i\bigoplus O_Y(-F)\otimesk \rho_0,
\end{align*}which is an isomorphism in $D_c(Y)$. However since 
the rhs is concentrated to degree zero only, it is an isomorphism of 
$O_Y$-modules. It follows 
\begin{align*}
\cV^{\dagger}&=\cI/(\fm\cI+I_F\cI)
\simeq(\cI/\fm \cI)\otimes_{O_Y}O_F\\
&\simeq 
\bigoplus_{i=1}^nO_{E_i}(-1)\otimesk \rho_i\bigoplus O_F(-F)\otimesk \rho_0.
\end{align*}

It remains to compute $\cV$.  
By Lemma~\ref{sublemma:cV and cVdagger}, 
\begin{align*}
I_{Z_y}/\fm I_{Z_y}&=\bigoplus_{i=1}^n O_{E_i}(-1)
\otimes_{O_Y} O_y\otimesk \rho_i\bigoplus O_y(-F)\otimesk \rho_0,\\
I_{Z_y}/(\fm I_{Z_y}+\fn)
&\simeq\bigoplus_{i=1}^n O_{E_i}(-1)\otimes_{O_Y} O_y\otimesk \rho_i
\end{align*}where 
every generator of $O_F(-F)\otimesk \rho_0\subset\cV^{\dagger}$ is 
the image of a $G$-invariant polynomial in $\cI$, which reduces to zero 
in $\cV$ because $\fm\cap k[x,y]^G\subset\fn$. 
Hence we have $\cV=\bigoplus_{i=1}^n O_{E_i}(-1)\otimesk \rho_i.$
This proves Theorem~\ref{thm:mckay isom} and 
Corollary~\ref{cor:global local isom}.
\end{proof}

\begin{rem}
Whether or not one can find an isomorphism 
similar  to Theorem \ref{thm:mckay isom} is of some interest when $U=\bC^n$ and 
$G$ is a very natural subgroup of $\GL(U)$. For example, 
$U$ is the Griess algebra or the vertex operator algebra for the big Monster $\bM$. For $G=M_{24}$, the Mathieu group of degree 24, $U$ is the Leech lattice. 
\end{rem}

\subsection{Integral functors with dual kernel objects}
In this subsection, we slightly alter the functors $\Phi$ and $\Psi$
into functors which are more suitable for our purpose.

For $J \in D^G(Y \times U)$, we define a functor
$\Phi_J: D_c(Y) \to D_c^G(U)$
by $$\Phi_J(-)=\bR(\pi_U)_*(\pi_Y^*(-)
\overset{\bL}{\otimes}_{O_{Y\times_k U}}J).$$
In the sequel, we assume that the support of $J$ is proper over both $Y$ and $U$.
Then $\Phi_J$ is extended to
$$\Phi_J: D(Y) \to D^G(U).$$
We denote the right adjoint of $\Phi_J$ by $\Phi_J^*$. 
\begin{sublemma}
\label{lemma:PhiJdual}
Let $J$ be an object of $D^G(Y \times_k U)$ {whose support is proper over both $Y$ and $U$}, 
$J^\vee$ the derived dual of $J$,  $A\in D(Y)$ 
and $A^\vee$ the derived dual of $A$.   Then 
\begin{align*}
\Phi_{J^\vee}(A)&\simeq\Phi_J(A^{\vee} 
\overset{\bL}{\otimes}_{O_Y} K_Y[2])^{\vee}.
\end{align*}
\end{sublemma}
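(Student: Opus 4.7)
The approach is to invoke relative Grothendieck--Serre duality for the smooth projection $\pi_U\colon Y\times_k U\to U$. Since $Y$ is a nonsingular surface, $\pi_U$ is smooth of relative dimension two, so its relative dualizing complex is $\omega_{\pi_U}\simeq \pi_Y^*K_Y[2]$, equivalently $\pi_U^!O_U\simeq \pi_Y^*K_Y[2]$. The properness of $\Supp(J)$ over $U$ ensures that $\bR(\pi_U)_*$ is well-behaved on the relevant complexes and that the duality isomorphism applies.

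Starting from the right-hand side, I would write
\begin{align*}
\Phi_J(A^\vee\Lotimes K_Y[2])^\vee
&=\bR\chom_{O_U}\bigl(\bR(\pi_U)_*(\pi_Y^*(A^\vee\Lotimes K_Y[2])\Lotimes J),\,O_U\bigr),
\end{align*}
and then push $\bR\chom$ inside $\bR(\pi_U)_*$ via relative duality, obtaining
$$\bR(\pi_U)_*\bR\chom_{O_{Y\times_k U}}\bigl(\pi_Y^*(A^\vee\Lotimes K_Y[2])\Lotimes J,\,\pi_Y^*K_Y[2]\bigr).$$
Because $\pi_Y^*K_Y$ is invertible, the two copies of $\pi_Y^*K_Y[2]$ cancel, yielding $\bR(\pi_U)_*\bR\chom(\pi_Y^*A^\vee\Lotimes J,O_{Y\times_k U})$. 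Since $Y$ is smooth, $A$ is a perfect complex, so $\pi_Y^*A^\vee$ is perfect and bidualizes to $\pi_Y^*A$; the internal hom therefore simplifies to $\pi_Y^*A\Lotimes J^\vee$, and pushing forward gives exactly $\Phi_{J^\vee}(A)$.

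Concretely the steps are: (i) identify $\pi_U^!O_U$ with $\pi_Y^*K_Y[2]$ for the smooth morphism $\pi_U$; (ii) apply Grothendieck duality to move $\bR\chom(-,O_U)$ under $\bR(\pi_U)_*$; (iii) absorb the two occurrences of $\pi_Y^*K_Y[2]$ using $\bR\chom(\cF\otimes L,L)\simeq \bR\chom(\cF,O)$ for an invertible $L$; (iv) use perfection of $A$ on the smooth $Y$ to get $(\pi_Y^*A^\vee\Lotimes J)^\vee\simeq \pi_Y^*A\Lotimes J^\vee$. The main obstacle, as usual, is bookkeeping: one must verify that every canonical isomorphism above is $G$-equivariant so that the conclusion holds in $D^G(U)$, and one must also confirm that the two notions of derived dual in play (over $O_Y$ for $A^\vee$ and over $O_{Y\times_k U}$ for $J^\vee$) interact correctly with $\pi_Y^*$ under base change, which is the standard compatibility of pullback with $\bR\chom$ for perfect complexes.
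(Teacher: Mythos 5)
Your argument is correct and is essentially the paper's own proof run in the opposite direction: the paper starts from $\Phi_{J^\vee}(A)^\vee$, applies relative Grothendieck duality for $\pi_U$ with dualizing complex $\pi_Y^*K_Y[2]$, and lands on $\Phi_J(A^\vee\Lotimes_{O_Y}K_Y[2])$ before dualizing once more, whereas you start from $\Phi_J(A^\vee\Lotimes_{O_Y}K_Y[2])^\vee$ and simplify to $\Phi_{J^\vee}(A)$. The key ingredients — the identification $\pi_U^!O_U\simeq\pi_Y^*K_Y[2]$, properness of $\Supp(J)$ over $U$, and biduality of perfect complexes — are identical in both.
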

\begin{proof}
The assertion follows from:
\begin{align*}
\Phi_{J^\vee}(A)^\vee & \simeq \Rhom_{O_U}(\bR(\pi_U)_*(\pi_Y^*(A) \overset{\bL}{\otimes}_{O_{Y\times_k U}} J^\vee),\, O_U) \\
& \simeq \bR(\pi_U)_* (\Rhom_{O_{Y\times_k U}}(\pi_Y^*(A)\overset{\bL}{\otimes}_{O_{Y\times_k U}} J^\vee,\, \pi_Y^*K_Y[2])) \\
& \simeq \bR(\pi_U)_* (\pi_Y^*(A^{\vee}\overset{\bL}{\otimes}_{O_Y} K_Y[2]) \overset{\bL}{\otimes}_{O_{Y\times_k U}} J) \\
& \simeq \Phi_J(A^{\vee} \overset{\bL}{\otimes}_{O_Y} K_Y[2]).
\end{align*} \end{proof}

See also \cite[Remark~5.8]{Huybrechts06}.\par

\begin{subcor}\label{cor:Phi and PhiOZ}
$\Phi_{O_\cZ}(B)\simeq
\Phi(B^{\vee}\otimes_{O_Y} K_Y)^{\vee}\otimes_{O_{Y\times_k U}} \pi_U^*K_U$.  
\end{subcor}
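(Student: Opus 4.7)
The plan is to reduce the corollary directly to Lemma \ref{lemma:PhiJdual} together with a projection formula. First I would unpack $\Phi$ in terms of the generic integral-transform notation. Since $\rho_0$ is trivial and $\pi_U^*K_U$ is pulled back from $U$, the projection formula yields
\[
\Phi(A) \;\simeq\; \bR(\pi_U)_*\bigl(O_\cZ^{\vee} \overset{\bL}{\otimes}_{O_{Y\times_k U}} \pi_Y^*(A)\bigr) \overset{\bL}{\otimes}_{O_U} K_U[2] \;=\; \Phi_{O_\cZ^{\vee}}(A)\otimes_{O_U} K_U[2].
\]
So the corollary is reduced to proving
\[
\Phi_{O_\cZ^{\vee}}(B^{\vee}\otimes_{O_Y}K_Y)^{\vee} \otimes_{O_U} K_U^{\vee}[-2] \otimes_{O_U} K_U \;\simeq\; \Phi_{O_\cZ}(B),
\]
i.e.\ (after collecting shifts and line bundles) to showing that
\[
\Phi_{O_\cZ^{\vee}}\bigl(B^{\vee}\otimes_{O_Y}K_Y\bigr)\;\simeq\;\Phi_{O_\cZ}(B)^{\vee}[-2].
\]

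Next I would specialize Lemma \ref{lemma:PhiJdual} to the kernel $J=O_\cZ$ (whose support $\cZ$ is finite over $Y$ and proper over $U$, so the properness hypothesis holds), which gives for every $A\in D(Y)$
\[
\Phi_{O_\cZ^{\vee}}(A)\;\simeq\;\Phi_{O_\cZ}\bigl(A^{\vee}\otimes_{O_Y}K_Y[2]\bigr)^{\vee}.
\]
Taking $A=B^{\vee}\otimes_{O_Y}K_Y$ and simplifying $(B^{\vee}\otimes K_Y)^{\vee}\otimes K_Y[2]=B\otimes K_Y^{\vee}\otimes K_Y[2]\simeq B[2]$ yields
\[
\Phi_{O_\cZ^{\vee}}\bigl(B^{\vee}\otimes_{O_Y}K_Y\bigr)\;\simeq\;\Phi_{O_\cZ}(B[2])^{\vee}\;=\;\Phi_{O_\cZ}(B)^{\vee}[-2],
\]
which is exactly what is needed. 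Feeding this back into the first displayed equation gives
\[
\Phi(B^{\vee}\otimes_{O_Y}K_Y)\;\simeq\;\Phi_{O_\cZ}(B)^{\vee}[-2]\otimes_{O_U}K_U[2]\;=\;\Phi_{O_\cZ}(B)^{\vee}\otimes_{O_U}K_U,
\]
and dualizing once more and tensoring with $K_U$ produces the claimed formula $\Phi(B^{\vee}\otimes_{O_Y}K_Y)^{\vee}\otimes_{O_{Y\times_k U}}\pi_U^*K_U \simeq \Phi_{O_\cZ}(B)$, once the double-dual identity $\Phi_{O_\cZ}(B)^{\vee\vee}\simeq \Phi_{O_\cZ}(B)$ is invoked.

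The one step requiring a genuine check is this last reflexivity: since $U$ is smooth and $\cZ\subset Y\times_k U$ is finite and flat over $Y$, the sheaf $O_\cZ$ is a perfect complex on $Y\times_k U$, so $\Phi_{O_\cZ}(B)$ is a perfect complex on $U$ and hence canonically isomorphic to its double derived dual. Modulo this standard verification, everything else is bookkeeping with line bundles and shifts, so I expect no further obstacle — the real content has been absorbed into Lemma \ref{lemma:PhiJdual}.
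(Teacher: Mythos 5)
Your proof is correct and follows exactly the route the paper intends: the corollary is stated as an immediate consequence of Lemma~\ref{lemma:PhiJdual} applied with $J=O_{\cZ}$, combined with the projection formula to rewrite $\Phi$ as $\Phi_{O_{\cZ}^{\vee}}(-)\otimes_{O_U}K_U[2]$, and your bookkeeping with duals, shifts and the reflexivity of perfect complexes on the smooth $U$ is accurate.
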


\begin{subcor}
\label{cor:rightadj of PhiOZ}
Let $\phi$ and $\psi$ be the same as in Definition~\ref{defn:Ishii's Psi and Phi}. 
Then the right adjoint $\Phi_{O_{\cZ}}^*$ of $\Phi_{O_{\cZ}}$ is given by
\begin{align*}
\Phi_{O_{\cZ}}^*(B)&=\Psi(B^{\vee} \overset{\bL}{\otimes}_{O_U} K_U)^{\vee}
\overset{\bL}{\otimes}_{O_Y} K_Y
\end{align*} for $A\in D(Y)$ and $B\in D^G_c(U)$.
\end{subcor}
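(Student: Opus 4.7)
The plan is to reduce the computation of the right adjoint $\Phi^{*}_{O_{\cZ}}$ to the already-known adjunction $\Psi\dashv\Phi$ from Theorem~\ref{thm:Ishii fully faithful}, using Corollary~\ref{cor:Phi and PhiOZ} as the bridge between $\Phi_{O_{\cZ}}$ and Ishii's $\Phi$, together with the duality identity $\Hom(X^{\vee},Y)\simeq\Hom(Y^{\vee},X)$ for perfect complexes and the invertibility of $K_U$ and $K_Y$ to shuttle duals and twists across a Hom.

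I would start from $\Hom_{D^{G}(U)}(\Phi_{O_{\cZ}}(A),B)$ for $A\in D(Y)$ and $B\in D_{c}^{G}(U)$, and first apply Corollary~\ref{cor:Phi and PhiOZ} to rewrite $\Phi_{O_{\cZ}}(A)$ as $\Phi(A^{\vee}\Lotimes_{O_Y}K_Y)^{\vee}\Lotimes_{O_U}K_U$. A duality move, transferring the dual and the twist by $K_U$ across the Hom, then produces $\Hom(B^{\vee}\Lotimes_{O_U}K_U,\,\Phi(A^{\vee}\Lotimes_{O_Y}K_Y))$, at which point the adjunction $\Psi\dashv\Phi$ rewrites this as $\Hom_{D(Y)}(\Psi(B^{\vee}\Lotimes_{O_U}K_U),\,A^{\vee}\Lotimes_{O_Y}K_Y)$. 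A second duality move then isolates $A$ on the left, giving $\Hom_{D(Y)}(A,\,\Psi(B^{\vee}\Lotimes_{O_U}K_U)^{\vee}\Lotimes_{O_Y}K_Y)$, and the Yoneda principle delivers the claimed formula.

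The main obstacle will be verifying the perfectness hypotheses required for the two duality moves. The universal cluster $\cZ$ is finite and flat over $Y$ by construction, so $O_{\cZ}$ and its derived dual $O_{\cZ}^{\vee}$ are perfect $O_{Y\times_{k} U}$-modules; hence the Fourier--Mukai transforms $\Phi(A^{\vee}\Lotimes_{O_Y}K_Y)$ and $\Psi(B^{\vee}\Lotimes_{O_U}K_U)$ are bounded coherent objects with proper support along the relevant projection, which is what is needed for the $(-)^{\vee\vee}=\id$ identification. The $G$-equivariance is harmless because $G$ acts trivially on $Y$ and $|G|$ is invertible in $k$, so the adjunction $\Psi\dashv\Phi$ passes through invariants without difficulty.

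The other delicate point is shift bookkeeping: each Grothendieck--Serre duality move contributes a shift by $\dim=2$, but these are absorbed by the $K_U[2]$ already built into Ishii's $\Phi$ in Definition~\ref{defn:Ishii's Psi and Phi} and by the symmetric shift carried by $\Psi$ via the adjunction, so the final formula involves no net shift. An equivalent and slightly more direct route would apply Grothendieck--Serre duality to the proper projection $\pi_{U}$ on $Y\times_{k}U$ to compute the right adjoint of $\bR(\pi_{U})_{*}$ directly, obtaining $\Phi^{*}_{O_{\cZ}}(-)\simeq\Psi_{O_{\cZ}^{\vee}}(-)\Lotimes_{O_Y}K_{Y}[2]$, and then invoke the natural $\Psi$-analogue of Lemma~\ref{lemma:PhiJdual} to rewrite $\Psi_{O_{\cZ}^{\vee}}(-)$ as $\Psi((-)^{\vee}\Lotimes_{O_U}K_{U}[2])^{\vee}$, whose internal shifts cancel to yield the stated formula.
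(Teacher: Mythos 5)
Your proposal is correct and follows essentially the same route as the paper: both arguments chain together the duality move $\Hom(X,Y)\simeq\Hom(Y^{\vee},X^{\vee})$ twisted by $K_U$, the identification of Corollary~\ref{cor:Phi and PhiOZ}, and the adjunction $\Psi\dashv\Phi$ from Theorem~\ref{thm:Ishii fully faithful}, differing only in whether one dualizes before or after substituting for $\Phi_{O_{\cZ}}(A)$. Your extra attention to the perfectness of $O_{\cZ}$ (finite flat over $Y$) and to the shift bookkeeping is sound but not spelled out in the paper's proof.
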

\begin{proof}The assertion follows from:
\begin{align*}
&\Hom_{D^G(U)}(\Phi_{O_\cZ}(A), B)
\simeq \Hom_{D^G(U)}(B^\vee, \Phi_{O_\cZ}(A)^\vee) \\
& \simeq \Hom_{D^G(U)}(B^\vee \overset{\bL}{\otimes}_{O_U} K_U, 
\Phi_{O_\cZ}(A)^\vee 
\overset{\bL}{\otimes}_{O_U} K_U) \\
& \simeq \Hom_{D^G(U)}(B^\vee \overset{\bL}{\otimes}_{O_U} K_U, 
\Phi(A^\vee \overset{\bL}{\otimes}_{O_Y} K_Y))\quad(\because \text{by Corollary \ref{cor:Phi and PhiOZ}}) \\
& \simeq \Hom_{D(Y)}(\Psi(B^\vee \overset{\bL}{\otimes}_{O_U} K_U), 
A^\vee \overset{\bL}{\otimes}_{O_Y} K_Y)
\quad(\because \text{by Theorem \ref{thm:Ishii fully faithful}}) \\
& \simeq \Hom_{D(Y)}(A, \Psi(B^\vee \Lotimes_{O_U} K_U)^\vee
\Lotimes_{O_Y} K_Y).
\end{align*}
\end{proof}

Now Lemma~\ref{sublemma:Psi O0rhoi*} (see also \cite[Theorem 5.1]{Ishii02})  
is restated as follows:
\begin{subcor}\label{cor:Phi(O0rho)}
For an irreducible representation $\rho$ of $G$,
$$
\Phi_{O_\cZ}^*(O_0 \otimesk  \rho) \simeq
\begin{cases}
O_{E(\rho)}(-1) & \text{if $\rho$ is non-trivial special,} \\
\omega_F[1] & \text{if $\rho=\rho_0$,} \\
0 & \text{otherwise.}
\end{cases}
$$
\end{subcor}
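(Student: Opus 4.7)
The plan is to apply Corollary \ref{cor:rightadj of PhiOZ}, which expresses $\Phi_{O_\cZ}^*(B) \simeq \Psi(B^{\vee} \Lotimes_{O_U} K_U)^{\vee} \Lotimes_{O_Y} K_Y$ and thereby reduces the computation to three successive operations on $B = O_0 \otimesk \rho$: twisted dual on $U$, application of $\Psi$, and twisted dual on $Y$.

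First, I would compute $(O_0 \otimesk \rho)^{\vee} \Lotimes_{O_U} K_U$ $G$-equivariantly. The $G$-equivariant Koszul resolution of $O_0$ on the smooth surface $U = \bA^2_k$ gives $\Rhom_{O_U}(O_0, O_U) \simeq O_0 \otimesk \det\rho_{\nat}[-2]$; since $K_U \simeq O_U \otimesk \det\rho_{\nat}^{\vee}$ as $G$-equivariant invertible sheaf, the two determinant twists cancel, yielding $(O_0 \otimesk \rho)^{\vee} \Lotimes_{O_U} K_U \simeq O_0 \otimesk \rho^*[-2]$. Feeding this into Lemma \ref{sublemma:Psi O0rhoi*} then produces $\Psi(O_0 \otimesk \rho^*[-2])$ as $O_F[-2]$ if $\rho = \rho_0$, as $O_{E(\rho)}(-1)[-1]$ if $\rho$ is non-trivial special, and as $0$ if $\rho$ is non-trivial non-special.

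It remains to dualize on $Y$ and tensor with $K_Y$. For $\rho = \rho_0$, the short exact sequence $0 \to O_Y(-F) \to O_Y \to O_F \to 0$ yields $\Rhom_{O_Y}(O_F, O_Y) \simeq O_F(F)[-1]$, hence $(O_F[-2])^{\vee} \simeq O_F(F)[1]$, and the adjunction formula $\omega_F \simeq K_Y|_F \otimes O_F(F)$ (with $\omega_F$ the dualizing sheaf of the possibly non-reduced divisor $F$) produces $\omega_F[1]$. For $\rho$ non-trivial special with $E := E(\rho)$, Grothendieck duality for the closed immersion $i \colon E \hookrightarrow Y$ (using $i^{!} O_Y \simeq O_E(E)[-1]$) gives $\Rhom_{O_Y}(O_E(-1), O_Y) \simeq O_E(E+1)[-1]$, so $(O_E(-1)[-1])^{\vee} \simeq O_E(E+1)$; tensoring with $K_Y$ and using adjunction on the smooth rational curve $E$ (where $\omega_E = O_E(-2)$) collapses this to $O_E(E+1) \otimes K_Y|_E \simeq O_E(1) \otimes \omega_E \simeq O_E(-1)$. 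The non-special case is immediate.

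The main technical obstacle is the careful bookkeeping of $G$-equivariant determinant twists and shifts in the first step, ensuring that the $\det\rho_{\nat}$ arising from local duality on $U$ cancels cleanly with the $\det\rho_{\nat}^{\vee}$ in $K_U$ (so no spurious twist of $\rho^*$ by a power of $\det\rho_{\nat}$ appears), together with the correct interpretation of $\omega_F$ as the dualizing sheaf of the non-reduced fundamental divisor $F$ in the trivial case and verification that the adjunction formula applies in this slightly non-standard setting.
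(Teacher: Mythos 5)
Your proposal is correct and follows exactly the route the paper intends: the paper presents this corollary without proof as a "restatement" of Lemma~\ref{sublemma:Psi O0rhoi*} via the adjoint formula of Corollary~\ref{cor:rightadj of PhiOZ}, and your computation (equivariant Serre duality at the origin giving $(O_0\otimesk\rho)^{\vee}\Lotimes_{O_U}K_U\simeq O_0\otimesk\rho^*[-2]$, then dualizing $O_F$ and $O_{E(\rho)}(-1)$ on $Y$ via the divisor sequences and adjunction) is precisely the omitted verification, carried out correctly in all three cases.
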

Here, for a special representation $\rho=\rho_i$,
$E(\rho)=E_i$ denotes the corresponding irreducible component of $F$.

\section{Extensions of the socle and the cup products}
\label{sec:extensions and cup}

\subsection{Basic construction of extensions}
\label{subsec:basic constr}
\begin{subdefn}
\label{subdefn:pushforward}
Let
\begin{diagram}
(s)&:&0&\rTo& A&\rTo^{\phi} &C&\rTo^{\eta}& F&\rTo& 0.
\end{diagram}be an exact sequence of $R$-modules.
For any $R$-homomorphism $\psi:A\to B$, 
we define {\em the pushforward} $\psi_*(s)$ of $(s)$ by $\psi$ to be 
an exact sequence:  
\begin{diagram}
\psi_*(s)&:&0&\rTo& B&\rTo^{(\id_B,0)} &C_{\psi}&\rTo^{\eta\cdot p_2}& F&\rTo& 0.
\end{diagram}
where $C_{\psi}:=B\times C/(\psi,{-}\phi)(A)$.
\end{subdefn}
\begin{subdefn}
For any $R$-homomorphism $\gamma:E\to F$, we define 
{\em the pullback} $\gamma^*(s)$ of $(s)$ by $\gamma$ to be an exact sequence:
\begin{diagram}
\gamma^*(s)&:&0&\rTo& A&\rTo^{(0,\phi)} &C^{\gamma}&\rTo^{\eta\cdot p_2}& E&\rTo& 0
\end{diagram}
where $C^{\gamma}:=E\times_F C:=\{(e,c)\in E\times C; \gamma(e)=\eta(c)\}$.
\end{subdefn}

\begin{defn}
\label{defn:socle}

For an $O_{U,0}$-module $M$, 
we define the socle $\Soc(M)$ of $M$ 
to be the sum of all minimal $O_{U,0}$-submodules of $M$.
For a cluster $Z_y$ with $y\in F$, $O_{Z_y}=O_U/I_y$ is 
an $O_U$-module supported by the origin of $U$. 
Let $\fm$ be the maximal ideal of 
$O_U$ defining $O$. Then it is easy to see 
$$\Soc(O_{Z_y})=[I_y:\fm]/I_y.$$
\end{defn}

\subsection{The extension $O_U/\fm I_y$}
\label{subsec:socle OU/mIy}
The purpose of this subsection is to study the following exact sequence:
\begin{equation*}
\label{seq:basic exact}
\begin{diagram}
(t_1)&:&0&\rTo&I_y/\fm I_y&\rTo&O_U/\fm I_y&\rTo^{\phi}&O_{Z_y}&\rTo&0\\
&&&&\Vert&&\cup&&\cup&&\\
(t_2)&:&0&\rTo&I_y/\fm I_y&
\rTo&[I_y:\fm]/\fm I_y&\rTo&[I_y:\fm]/I_y&\rTo&0.
\end{diagram}
\end{equation*} 

Let $\Gen(I_y)=I_y/\fm I_y=\bigoplus_{\rho\in \Lambda_y} W(\rho)$ for some 
$G$-invariant $G$-irreducible 
$O_U$-submodule $W(\rho)\neq 0$  where $\Lambda_y$ is the set of irreducible representations appearing
as direct summands in Theorem \ref{thm:local McKay}.

By Lemma~\ref{lemma:Extk(OZy,O0rhoi)}, for every $\rho\in\Lambda_y$, 
there exists a unique ideal $J_{\rho}$ of $O_U$ 
with $\fm I_y\subset J_{\rho}\subset I_y$ and 
a non-trivial extension 
\begin{diagram}
({\ext}_{\rho})\quad&:& 0&\rTo&W(\rho)&\rTo
&O_U/J_{\rho}&\rTo&O_{Z_y}&\rTo&0
\end{diagram}where $I_y/J_{\rho}\simeq W(\rho)$ and 
$\fm I_y=\cap_{\rho\in \Lambda_y} J_{\rho}$ because $\Gen(I_y)=\bigoplus_{\rho\in \Lambda_y} W(\rho)$. \par
We choose and fix $0\neq V(\xi)\subset\Soc(O_{Z_y})$ {with $V(\xi) \cong \xi$}. Since $V(\xi)$ is an $O_U$-submodule of $O_{Z_y}$, there exists an $O_U$-submodule $B(\xi)$ of $[I_y:\fm]$ such that 
$B(\xi)/I_y=V(\xi)$. Then we have  a commutative diagram of exact sequences 
\begin{equation*}
\label{seq:2nd basic exact}
\begin{diagram}
(u_1)&:&0&\rTo&\bigoplus_{\rho\in \Lambda_y} W(\rho)&
\rTo&O_U/\fm I_y&\rTo^{\phi}&O_{Z_y}&\rTo&0,\\
&&&&\Vert&&\cup&&\cup&&\\
(u_2)&:&0&\rTo&\bigoplus_{\rho\in \Lambda_y} W(\rho)&
\rTo&B(\xi)/\fm I_y&\rTo&B(\xi)/I_y&\rTo&0.\\
&&&&\dTo&&\dTo&&\Vert&\\
(u_3)&:&0&\rTo&W(\rho)&
\rTo&B(\xi)/J_{\rho}&\rTo&B(\xi)/I_y&\rTo&0.
\end{diagram}
\end{equation*}

It is easy to see 
\begin{sublemma}\label{lemma:equiv not splits}
Let $V_{\rho}^{\natural}(\xi)$ be any $k[G]$-submodule (not necessarily an $O_U$-submodule) of $B(\xi)/J_{\rho}$ 
 lifting $V(\xi)$. Then the following are equivalent:
\begin{enumerate}
\item the exact sequence $(u_3)$ does not split,
\item no lifting $V^{\natural}_{\rho}(\xi)$ 
of $V(\xi)$ is a $G$-invariant $O_U$-module,
\item $S_1\cdot V^{\natural}_{\rho}(\xi)=W(\rho)$,
\item $S_1\cdot V^{\natural}_{\rho}(\xi)\supset W(\rho)$.
\end{enumerate}
\end{sublemma}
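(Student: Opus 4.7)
The plan is to prove a cycle of implications based on a single key observation about how multiplication by $\fm$ behaves on the lifting $V^\natural_\rho(\xi)$. Specifically, I will show (1) $\Leftrightarrow$ (2) from the definition of splitting, (2) $\Leftrightarrow$ (3) from an analysis of when a $k[G]$-lifting is $O_U$-stable, and (3) $\Leftrightarrow$ (4) immediately from the irreducibility of $W(\rho)$.

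The key observation to establish first is that multiplication by $\fm$ on $V^\natural_\rho(\xi)$ reduces to multiplication by $S_1 = kx + ky$, and the resulting subspace $S_1 \cdot V^\natural_\rho(\xi) \subset W(\rho)$ is independent of the choice of $k[G]$-lifting. Indeed, since $V(\xi) \subset \Soc(O_{Z_y})$ we have $\fm B(\xi) \subset I_y$, so $\fm \cdot V^\natural_\rho(\xi) \subset I_y/J_\rho = W(\rho)$. Moreover $\fm I_y \subset J_\rho$ by construction, so $\fm \cdot W(\rho) = 0$ inside $B(\xi)/J_\rho$; combining these gives $\fm^2 V^\natural_\rho(\xi) = 0$, hence $\fm V^\natural_\rho(\xi) = S_1 \cdot V^\natural_\rho(\xi)$. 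For the independence, any two $k[G]$-liftings $V^\natural_\rho(\xi)$ and $V'^\natural_\rho(\xi)$ differ by a $G$-homomorphism $h\colon V(\xi) \to W(\rho)$, and since $S_1 \cdot h(V(\xi)) \subset S_1 \cdot W(\rho) = 0$, the subspace $S_1 \cdot V^\natural_\rho(\xi)$ does not change.

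Next, (2) $\Leftrightarrow$ (3): a lifting $V^\natural_\rho(\xi)$ is an $O_U$-submodule iff $\fm \cdot V^\natural_\rho(\xi) \subset V^\natural_\rho(\xi)$; since $\fm \cdot V^\natural_\rho(\xi) \subset W(\rho)$ while $V^\natural_\rho(\xi) \cap W(\rho) = 0$, this is equivalent to $\fm \cdot V^\natural_\rho(\xi) = 0$, i.e.\ $S_1 \cdot V^\natural_\rho(\xi) = 0$. By the independence observation together with the irreducibility of $W(\rho)$, the dichotomy $S_1 \cdot V^\natural_\rho(\xi) \in \{0, W(\rho)\}$ shows that the negation — namely, that no lifting is $O_U$-stable — is equivalent to $S_1 \cdot V^\natural_\rho(\xi) = W(\rho)$. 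The implication (4) $\Rightarrow$ (3) is then immediate from irreducibility, while (3) $\Rightarrow$ (4) is trivial.

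Finally, (1) $\Leftrightarrow$ (2) is tautological: an $O_U$-splitting of $(u_3)$ (which is automatically $G$-equivariant since $\op{char}(k) \nmid |G|$) exhibits a $G$-invariant $O_U$-lifting of $V(\xi)$, and conversely such a lifting provides the splitting. The only point requiring care — and the one I would flag as the main subtlety rather than an obstacle — is the independence of $S_1 \cdot V^\natural_\rho(\xi)$ from the lifting, which rests crucially on $\fm$ annihilating $W(\rho)$ inside $B(\xi)/J_\rho$.
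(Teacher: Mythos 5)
Your proof is correct, and since the paper dismisses this lemma with ``It is easy to see'' and supplies no argument, yours simply fills in the intended one: the whole content is that $\fm^2$ annihilates $B(\xi)/J_\rho$, that $S_1\cdot V^{\natural}_{\rho}(\xi)$ lands in the irreducible $G$-module $W(\rho)$ and is independent of the choice of $k[G]$-lifting, and that a lifting is an $O_U$-submodule exactly when this product vanishes. The only cosmetic point worth tightening is the parenthetical that an $O_U$-splitting is ``automatically'' $G$-equivariant --- it is not, but averaging over $G$ (legitimate since $\op{char} k \nmid |G|$) produces an equivariant one, which is all the equivalence $(1)\Leftrightarrow(2)$ needs.
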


\subsection{Cup-products}
\label{subsec:cup products}

We choose and fix irreducible representations  $\rho$ and $\xi$ of $G$ 
such that $W(\rho)\subset\Gen(I_y)$ and $V(\xi)\subset\Soc(O_{Z_y})$. 

The extension (ext$_{\rho}$) 
is uniquely determined by $\rho$  
in view of Theorem~\ref{thm:local McKay}. 
Since every irreducible 
submodule $V(\xi)$ of $O_{Z_y}$ 
is unique, 
the pullback $(u_3)$ of (ext$_{\rho}$) 
via the inclusion $j:V(\xi)\hookrightarrow  O_{Z_y}$
is written as:
\begin{equation}
\label{seq:pullback to Vxi and Wrho}
\begin{diagram}
0&\rTo&W(\rho)
&\rTo&B(\xi)/J_{\rho}&\rTo&V(\xi)=B(\xi)/I_y&\rTo&0. 
\end{diagram}
\end{equation}  
Associated to (\ref{seq:pullback to Vxi and Wrho}),  
we have a natural cup product 
\begin{equation}
\label{eq:0th cup prod}
\begin{aligned}
\Hom^0_{D^G(U)}(O_0\otimesk \xi, O_{Z_y})&\times 
\Hom^1_{D^G(U)}(O_{Z_y},O_0\otimesk\rho)\\
&\to \Hom^1_{D^G(U)}(O_0\otimesk\xi, O_0\otimesk\rho).
\end{aligned}
\end{equation}

By Lemma~\ref{lemma:equiv not splits}, 
the following are equivalent:
\begin{enumerate}
\item[(d)] 
the cup product (\ref{eq:0th cup prod}) is nonzero,
\item[(e)] the extension (\ref{seq:pullback to Vxi and Wrho}) is non-trivial,
\item[(f)] $S_1\cdot V^{\natural}_{\rho}(\xi)\supset W(\rho)$. 
\end{enumerate}

\subsection{The case where $G\subset \SL(2)$}
\label{subsec:sl2}
In this subsection, we assume $G\subset \SL(2)$. 
Since $G\subset\SL(2)$, 
both $\Psi$ and $\Phi$ are 
equivalences of the categories 
such that $\Psi\Phi\cong\id_{D(Y)}$ and
$\Phi\Psi\cong\id_{D^G(U)}$. This is proved by the same argument as in 
\cite{BKR01} and \cite[Theorem 6.2]{Ishii02}. 
The functors $\Phi_{O_\cZ}$ and its adjoint $\Phi_{O_{\cZ}}^*$ are also equivalences
by Corollary \ref{cor:Phi and PhiOZ} and \ref{cor:rightadj of PhiOZ}
(actually, \cite{BKR01} considers $\Phi_{O_\cZ}$ and $\Phi_{O_{\cZ}}^*$ rather than
$\Phi$ and $\Psi$ in this paper).

In what follows we consider a pair $\xi$ and $\rho$ {in $\Irr(G)$} such that 
$\xi\subset\Soc(O_{Z_y})$ and $\rho\subset\Gen(I_y)$. 
Since $G\subset \SL(2)$, {$\xi$ is special and non-trivial.}
 \par
%

We consider 
the cup product {\eqref{eq:0th cup prod}} in $D^G(U)$
\begin{equation*}
\begin{aligned}
\Hom^0_{D^G(U)}(O_0\otimesk \xi, O_{Z_y})&\times 
\Hom^1_{D^G(U)}(O_{Z_y},O_0\otimesk\rho)\\
&\to \Hom^1_{D^G(U)}(O_0\otimesk\xi, O_0\otimesk\rho).
\end{aligned}
\end{equation*} 

 If $\rho\neq\rho_0$, $\xi\neq\rho_0$, then it 
is translated {by Corollary \ref{cor:Phi(O0rho)}} into the following cup product
\begin{equation}
\label{eq:cup prod DcY}
\begin{aligned}
\Hom_{O_Y}(O_{E(\rho)}(-1), &\, O_y)\times 
\Ext^1_{O_Y}(O_y,  \,O_{E(\xi)}(-1))\\
&\to 
\Ext^1_{O_Y}(O_{E(\rho)}(-1), \,O_{E(\xi)}(-1)).
\end{aligned}
\end{equation} 

If $\rho=\rho_0$ and 
$\xi\neq\rho_0$, then 
(\ref{eq:0th cup prod}) is translated
into
\begin{equation*}
\begin{aligned}
\Hom_{O_Y}(O_{E(\xi)}(-1),\,&O_y)\times 
\Ext^2_{O_Y}(O_y, \,\omega_F)
\\
&\to 
\Ext^2_{O_Y}(O_{E(\xi)}(-1),\, \omega_{F})
\end{aligned}
\end{equation*}
and then, by Serre duality, into the following:
\begin{equation}
\label{eq:2nd cup prod DcY}
\begin{aligned}
\Hom_{O_Y}(\omega_F, O_{E(\xi)}&(-1)) \times
\Hom_{O_Y}(O_{E(\xi)}(-1), O_y)
\\
&\to 
\Hom_{O_Y}(\omega_F, O_y).
\end{aligned}
\end{equation}

In order to calculate these cup products, we use the following
lemmas which holds for the minimal resolutions of rational surface singularities:
\begin{lemma}
\label{lemma:cohom Ext Oy OE}Suppose $G\subset\GL(2)$. 
Let $F$ be the fundamental divisor, and 
$C$ any irreducible component of $F$. 
Then we have
\begin{equation*}
\Ext^{q}_{O_Y}(O_y, O_{C}(-1))
\simeq\Ext^{2-q}_{O_Y}(O_{C}(-1), O_y)^{\vee}
=\begin{cases}
k&(q=1,2, y\in C)\\
0&(\text{otherwise})
\end{cases}
\end{equation*}
\end{lemma}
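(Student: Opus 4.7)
The plan is to prove the stated isomorphism via Serre duality and then compute one side directly. Since $Y$ is smooth of dimension $2$ and both $O_y$ and $O_C(-1)$ have compact support, Serre duality for compactly supported complexes on a smooth quasi-projective variety gives
$$\Ext^q_{O_Y}(O_y, O_C(-1)) \simeq \Ext^{2-q}_{O_Y}(O_C(-1), O_y \otimes_{O_Y} \omega_Y)^{\vee}.$$
Because $O_y$ is a skyscraper and $\omega_Y$ is locally free, $O_y \otimes_{O_Y} \omega_Y \simeq O_y$, which yields the first isomorphism in the statement.

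Next I handle the vanishing. If $y \notin C$, then $O_y$ and $O_C(-1)$ have disjoint supports, so $\cext^q(O_C(-1), O_y) = 0$ for all $q$ and hence all $\Ext$ groups vanish. So I may assume $y \in C$ and compute $\Ext^r_{O_Y}(O_C(-1), O_y)$ for $r = 0, 1$. Since $X$ has rational singularities, every irreducible exceptional component $C$ is a smooth rational curve, so picking any $y_0 \in C \setminus \{y\}$ produces the exact sequence $0 \to O_C(-1) \to O_C \to O_{y_0} \to 0$ on $Y$. Because $\Ext^r(O_{y_0}, O_y) = 0$ for every $r$ (again disjoint supports), the associated long exact sequence collapses to $\Ext^r(O_C(-1), O_y) \simeq \Ext^r(O_C, O_y)$.

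The remaining computation uses the structure sequence $0 \to O_Y(-C) \to O_Y \to O_C \to 0$. Since $O_Y$ and $O_Y(-C)$ are locally free, $\Ext^q(O_Y(-C), O_y) \simeq H^q(Y, O_y \otimes_{O_Y} O_Y(C)) \simeq H^q(Y, O_y)$ equals $k$ in degree $0$ and vanishes otherwise (and similarly for $O_Y$). The one point requiring attention is that the map $\Hom(O_Y, O_y) \to \Hom(O_Y(-C), O_y)$ is precomposition with the inclusion $O_Y(-C) \hookrightarrow O_Y$, which locally is multiplication by a defining equation $u$ of $C$; since $u$ lies in the maximal ideal at $y$, it annihilates the skyscraper $O_y$ and this map is zero. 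The long exact sequence then reads $0 \to \Hom(O_C, O_y) \to k \to k \to \Ext^1(O_C, O_y) \to 0$ with middle map zero, yielding $\Hom(O_C, O_y) = k = \Ext^1(O_C, O_y)$ and $\Ext^q(O_C, O_y) = 0$ for $q \geq 2$. Combining with the reduction to $O_C$ and the Serre duality of the first paragraph gives the claimed values.

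The main obstacle is not one of difficulty but of care: invoking Serre duality in the correct form on the non-proper smooth surface $Y$ — namely the version for compactly supported complexes — and verifying that the $\omega_Y$-twist disappears because $O_y$ is a skyscraper. Once this framework is in place, the two short exact sequences together with disjointness of supports do all the substantive computational work.
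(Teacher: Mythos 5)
Your proof is correct: the compactly supported Serre duality step, the reduction $\Ext^r(O_C(-1),O_y)\simeq\Ext^r(O_C,O_y)$ via a point $y_0\neq y$ on the rational curve $C$, and the observation that the connecting map $\Hom(O_Y,O_y)\to\Hom(O_Y(-C),O_y)$ vanishes because a local equation of $C$ lies in $\fm_y$ all check out. The paper omits its own proof of this lemma (declaring it easy), so there is nothing to compare against; your argument is the standard one and is consistent with how Serre duality is used elsewhere in the paper.
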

\begin{lemma}
\label{lemma:cup product coh}Suppose $G\subset\GL(2)$. 
Let $F$ be the fundamental divisor, and 
$E,C$ any irreducible component of $F$ with $E\neq C$. 
Then we have
\begin{align*}
\Ext^{q}_{O_Y}(O_{C}(-1),O_{E}(-1))
&=\begin{cases}
k&(q=1, CE=1)\\
0&(\text{otherwise})
\end{cases}\\
\Ext^{q}_{O_Y}(O_{E}(-1),O_{E}(-1))
&=\begin{cases}
k&(q=0)\\
k^{\oplus (1+d)}&(q=2, E^2=-2-d)\\
0&(\text{otherwise})
\end{cases}\\
\Ext^{q}_{O_Y}(O_F, O_E(-1))&
=\begin{cases}
k^{\oplus e}&(q=2, FE=-e<0)\\
0&(q=2, FE=0)\\
0&(\text{otherwise})
\end{cases}
\end{align*}where $d=0$ and 
$e\leq 2$ if $G\subset\SL(2)$ 
\footnote{$e=2$ in the $A_1$ case; otherwise $e\leq 1$. 
}.
\end{lemma}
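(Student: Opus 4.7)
The strategy is to combine the local-to-global spectral sequence
\[ E_2^{p,q}=H^p(Y,\cext^q_{O_Y}(A,B))\Rightarrow \Ext^{p+q}_{O_Y}(A,B) \]
with the tautological short exact sequences $0\to O_Y(-D)\to O_Y\to O_D\to 0$ for $D\in\{C,E,F\}$. The structural input is classical for the minimal resolution of a rational surface singularity: every irreducible component $E_i$ of $F_{\red}$ is a smooth rational curve (from $H^1(O_F)=0$ and the minimality of $f$), any two distinct components meet transversally in at most one point (so $CE\in\{0,1\}$ for $C\neq E$), the normal bundle of $E$ is $N_{E/Y}\simeq O_E(E^2)$, and $FE_i\leq 0$ by the definition of the fundamental divisor.

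For $\Ext^q(O_E(-1),O_E(-1))$, the standard $\cext$ computation for a Cartier divisor on a smooth surface yields $\cext^0=O_E$, $\cext^1=N_{E/Y}\simeq O_E(-n)$ with $n=-E^2=2+d$, and $\cext^{\geq 2}=0$; the twists by $O_E(-1)$ on source and target cancel in each $\cext^q$. Since $E\simeq\bP^1$ we have $H^0(O_E)=k$, $H^1(O_E)=0$, $H^0(O_E(-n))=0$, and $H^1(O_E(-n))=k^{\oplus(n-1)}=k^{\oplus(1+d)}$ by Serre duality; the spectral sequence collapses into the claimed values.

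For $\Ext^q(O_C(-1),O_E(-1))$ with $C\neq E$, applying $\chom(-,O_E(-1))$ to the length-one locally free resolution of $O_C$ identifies the $\cext$-sheaves with the kernel and cokernel of the map $O_E(-1)\to O_E(CE-1)$ given by multiplication by the restriction to $E$ of a defining section of $C$. This section is nonzero on the integral curve $E$ (since $C\neq E$), so the map is injective with cokernel supported on $C\cap E$ of length $CE\in\{0,1\}$; hence $\cext^0=0$ and $\cext^1$ is either zero or a single residue field, while $\cext^{\geq 2}=0$. The spectral sequence then produces $\Ext^1=k^{\oplus CE}$ and trivial cohomology in all other degrees, matching the stated values.

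For $\Ext^q(O_F,O_E(-1))$ I apply $\Hom(-,O_E(-1))$ directly to $0\to O_Y(-F)\to O_Y\to O_F\to 0$. Since both $H^0(O_E(-1))$ and $H^1(O_E(-1))$ vanish on $\bP^1$, the long exact sequence collapses to $\Ext^i(O_F,O_E(-1))\simeq H^{i-1}(O_E(FE-1))$ for $i\geq 1$. The inequality $FE\leq 0$ then gives $H^0(O_E(FE-1))=0$ and $H^1(O_E(FE-1))=k^{\oplus(-FE)}=k^{\oplus e}$, yielding $\Ext^2=k^{\oplus e}$ when $FE<0$ and complete vanishing when $FE=0$. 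The only mild obstacle is the bookkeeping in the mixed case, where one must pin down the intersection cokernel before invoking transversality; the remaining steps are purely cohomological on $\bP^1$, and the footnote values $d=0$, $e\leq 2$ in the $G\subset\SL(2)$ case follow from the classification of Du Val configurations.
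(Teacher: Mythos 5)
Your proof is correct. The paper gives no argument for this lemma at all (it declares the proofs ``easy'' and omits them), and your computation --- resolving by the divisor sequences $0\to O_Y(-D)\to O_Y\to O_D\to 0$, identifying the $\cext$ sheaves as kernels/cokernels of multiplication by defining sections, and letting the local-to-global spectral sequence degenerate because everything is supported on curves, together with the standard facts that the exceptional components of a rational surface singularity are smooth rational curves meeting pairwise transversally in at most one point with $FE\le 0$ --- is precisely the routine verification the authors had in mind, so there is nothing in the paper to compare it against.
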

 The proofs of the lemmas are easy, so we omit them.

\begin{lemma}
\label{lemma:cup prod}Suppose $G\subset\SL(2)$. 
Then the following is true:
\begin{enumerate}
\item the cup product of (\ref{eq:cup prod DcY}) is 
$\begin{cases}
\neq 0&\text{if}\ \{y\}=E(\xi)\cap E(\rho)\\
0&\text{(otherwise)}
\end{cases}$
\item the cup product of (\ref{eq:2nd cup prod DcY}) is 
$\begin{cases}
\neq 0&\text{if}\ FE(\xi)\neq 0, y\in E(\xi)\\
0&\text{(otherwise)}
\end{cases}$
\end{enumerate}
\end{lemma}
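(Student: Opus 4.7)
The plan is to identify each cup product with an edge map in a suitable Ext long exact sequence, then read off its non-vanishing from the dimensions recorded in Lemmas~\ref{lemma:cohom Ext Oy OE} and \ref{lemma:cup product coh}. In the $\SL(2,k)$ setting every component $E_i$ of $F$ is a smooth $(-2)$-curve and two distinct components meet in at most one point, transversally, so $y\in E(\rho)\cap E(\xi)$ with $E(\rho)\neq E(\xi)$ automatically forces $\{y\}=E(\rho)\cap E(\xi)$.

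For part (1) the vanishing cases are immediate: if $y\notin E(\rho)$ or $y\notin E(\xi)$, one of the two factors is zero by Lemma~\ref{lemma:cohom Ext Oy OE}, and if $E(\rho)=E(\xi)$, the target $\Ext^1(O_{E(\rho)}(-1),O_{E(\xi)}(-1))$ is zero by Lemma~\ref{lemma:cup product coh} (with $d=0$). In the remaining case I would fix the generator $\beta\colon O_{E(\rho)}(-1)\to O_y$; its kernel on $E(\rho)\simeq\bP^1$ is $O_{E(\rho)}(-2)$. Apply $\Hom(-,O_{E(\xi)}(-1))$ to
\begin{equation*}
0\to O_{E(\rho)}(-2)\to O_{E(\rho)}(-1)\xrightarrow{\ \beta\ }O_y\to 0.
\end{equation*}
A direct stalk argument at $y$ shows $\Hom(O_{E(\rho)}(-i),O_{E(\xi)}(-1))=0$ for $i=1,2$: any such morphism is supported on $E(\rho)\cap E(\xi)=\{y\}$, hence its image lies in the torsion of $O_{E(\xi)}(-1)$, which is zero. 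Combined with $\Hom(O_y,O_{E(\xi)}(-1))=0$ (Lemma~\ref{lemma:cohom Ext Oy OE}), the long exact sequence yields an injection
\begin{equation*}
\Ext^1(O_y,O_{E(\xi)}(-1))\hookrightarrow\Ext^1(O_{E(\rho)}(-1),O_{E(\xi)}(-1)),
\end{equation*}
which is exactly pullback by $\beta$, namely the cup product in question.

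For part (2) the cup product is ordinary composition $(\alpha,\beta)\mapsto\beta\circ\alpha$, and the vanishing cases $y\notin E(\xi)$ and $FE(\xi)=0$ are immediate from Lemma~\ref{lemma:cup product coh}. Assume $y\in E(\xi)$ and $FE(\xi)=-e<0$. Fix $\beta\colon O_{E(\xi)}(-1)\to O_y$ and read the composition as the edge map of $\Hom(\omega_F,-)$ applied to
\begin{equation*}
0\to O_{E(\xi)}(-2)\to O_{E(\xi)}(-1)\xrightarrow{\ \beta\ }O_y\to 0.
\end{equation*}
The adjunction sequence $0\to\omega_Y\to\omega_Y(F)\to\omega_F\to 0$, together with $\omega_Y\simeq O_Y$ and $H^0(E(\xi),O_{E(\xi)}(-m))=0$ for $m\geq 1$, produces isomorphisms
\begin{equation*}
\Hom(\omega_F,O_{E(\xi)}(-m))\simeq H^0(\bP^1,O(e-m)),
\end{equation*}
of dimensions $e$ for $m=1$ and $e-1$ for $m=2$. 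Hence the kernel of the edge map has strict codimension in the source, so the cup product is nonzero.

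The main obstacle is the careful identification of each cup product with the corresponding edge map of the Ext long exact sequence---this is the familiar statement that Yoneda pairing with a morphism coming from a short exact sequence agrees with the induced pullback or pushforward, and it must be invoked with some care. Once this is in place, everything reduces to the dimension bookkeeping already carried out in Lemmas~\ref{lemma:cohom Ext Oy OE} and \ref{lemma:cup product coh}, and no further local computation is required.
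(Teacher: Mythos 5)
Your proof is correct. For part (1) you take a genuinely different route from the paper: the paper explicitly constructs the pulled-back extension as $0\to O_{E(\xi)}(-1)\to O_{E(\xi)+E(\rho)}(L)\to O_{E(\rho)}(-1)\to 0$ on the reducible curve $E(\xi)+E(\rho)$ and checks non-splitting by hand, whereas you show the precomposition map $\beta^*\colon\Ext^1(O_y,O_{E(\xi)}(-1))\to\Ext^1(O_{E(\rho)}(-1),O_{E(\xi)}(-1))$ is injective via the long exact sequence attached to $0\to O_{E(\rho)}(-2)\to O_{E(\rho)}(-1)\to O_y\to 0$ together with the (correct) observation that $\Hom(O_{E(\rho)}(-i),O_{E(\xi)}(-1))=0$ for $i=1,2$ because the image of any such map would be a torsion subsheaf of a line bundle on $E(\xi)$. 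Your argument is purely homological and avoids exhibiting the extension class, at the cost of being less explicit about what the nontrivial extension actually is (the paper's explicit sheaf $O_{E+C}(L)$ is reused in spirit elsewhere). For part (2) the two arguments are essentially the same: the paper's surjectivity of the evaluation map $\Hom(\omega_F,O_{E(\xi)}(-1))\otimesk\omega_F\to O_{E(\xi)}(-1)$ is exactly your dimension drop from $h^0(O(e-1))=e$ to $h^0(O(e-2))=e-1$ in the edge map, both resting on the identification $\Hom(\omega_F,O_{E(\xi)}(-m))\simeq H^0(O_{\bP^1}(e-m))$ via $\omega_Y\simeq O_Y$. Your treatment of the vanishing cases and the reduction to $\{y\}=E(\xi)\cap E(\rho)$ via transversality of components in the ADE configuration matches the paper's implicit use of Lemmas~\ref{lemma:cohom Ext Oy OE} and \ref{lemma:cup product coh}.
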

\begin{proof}
First we prove (1).  
Let $E=E(\xi)$, $C=E(\rho)$, 
$A=O_{E}(-1)$ and 
$B=O_{C}(-1)$.  Assume $E\neq C$.
It suffices to consider the case $\{y\}=E\cap C$ 
by Lemma~\ref{lemma:cohom Ext Oy OE}. Hence $EC=1$. 
Since $y\in E$, we have an exact sequence
$$(u)\quad :\quad 0\to O_E(-1)\to O_E\to O_y\to 0
$$which is a non-trivial extension given by a nonzero element of 
$\Ext^1_{O_Y}(O_y,A)$. Let $\gamma\in\Hom_{O_Y}(B,O_y)$ 
be a nonzero class.  The cup product (\ref{eq:cup prod DcY}) 
is the pullback 
$\gamma^*(u)$ of $(u)$, which is given explicitly by 
$$0\to O_E(-1)\to O_{E+C}(L)\to O_C(-1)\to 0
$$where $L$ is the unique line bundle of $E+C$ such that $L_E=O_E$ and 
$L_C=O_C(-1)$. This is a non-trivial extension because 
the following is non-trivial:
$$0\to O_E(-1)\to O_{E+C}\to O_C\to 0.
$$
 
If $E=C$, then  $\gamma^*(u)$ is trivial because $\Ext^1_{O_Y}(A,A)=0$
by Lemma~\ref{lemma:cup product coh}.  \par

Next we prove (2).
Since
$\omega_F \cong K_Y \otimes_{O_Y} O_F(-F) \cong O_F(-F)$,
we have
$\Hom_{O_Y}(\omega_F, O_{E(\xi)}(-1)) \cong H^0(O_{E(\xi)}(-FE(\xi)-1))$,
which is non-zero if and only if $FE(\xi) \ne 0$
(note that $FE(\xi) \le 0$ by the definition of the fundamental cycle).
In this case, the evaluation map
$$
\Hom_{O_Y}(\omega_F, O_{E(\xi)}(-1)) \otimesk \omega_F \to O_{E(\xi)}(-1)
$$
is surjective and therefore (2) is proved.
%
%
%
%
\end{proof}

Summarizing 
Subsections~\ref{subsec:socle OU/mIy}--\ref{subsec:sl2}, 
we obtain the following. 
\begin{thm}
\label{thm:mckay tensor prod}
Suppose $G\subset\SL(2)$. Under the above notation,
let $y\in F$, 
$V(\xi)\subset\Soc(I_y)$\ \footnote{hence $\xi\neq\rho_0$ 
by Theorem~\ref{thm:local McKay}} and 
$W(\rho)\subset\Gen(I_y)$.  Then the following are equivalent:
\begin{enumerate}
\item $W(\rho)\subset S_1\cdot V^{\natural}(\xi)$ in $\Gen(I_y)$,
\item the extension (\ref{seq:pullback to Vxi and Wrho}) is non-trivial,
\item the cup product (\ref{eq:0th cup prod}) is nonzero,
\item (\ref{eq:cup prod DcY}) 
is nonzero if $\rho\neq\rho_0$, while (\ref{eq:2nd cup prod DcY}) 
is nonzero if $\rho=\rho_0$, 
\item $E(\xi)E(\rho)=1$ {and $y \in E(\xi) \cap E(\rho)$} if $\rho\neq\rho_0$, while 
$E(\xi)F\neq 0$\ \footnote{this is equivalent to $\xi=\rho_{\nat}$  if it is not the $A_n$-case} {and $y \in E(\xi)$} if $\rho=\rho_0$.
\end{enumerate}
\end{thm}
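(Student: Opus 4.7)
The plan is to assemble the equivalence as a cycle $(1)\Leftrightarrow(2)\Leftrightarrow(3)\Leftrightarrow(4)\Leftrightarrow(5)$, with each link already prepared in the preceding subsections. Most of the proof is a matter of identifying pieces and invoking earlier lemmas, so the real content lies in verifying that the adjunctions and Serre duality correctly translate the $D^G(U)$-computation on $O_0\otimesk\rho$ into a $D(Y)$-computation on the components of $F$.

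First I would handle $(1)\Leftrightarrow(2)$: this is exactly Lemma~\ref{lemma:equiv not splits} applied to the pullback sequence $(u_3)$, which by construction coincides with the sequence (\ref{seq:pullback to Vxi and Wrho}). Next, for $(2)\Leftrightarrow(3)$, I would recall the discussion in Subsection~\ref{subsec:cup products}: the cup product (\ref{eq:0th cup prod}) is by definition the Yoneda pairing sending $(j,\mathrm{ext}_\rho)$ to the pullback $j^*(\mathrm{ext}_\rho)$, so its nonvanishing is equivalent to nontriviality of (\ref{seq:pullback to Vxi and Wrho}). This is purely categorical and does not use $G\subset\SL(2)$.

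For $(3)\Leftrightarrow(4)$, I would use the fact that when $G\subset\SL(2)$, the functor $\Phi_{O_\cZ}^*:D^G_c(U)\to D_c(Y)$ is an equivalence (Corollary~\ref{cor:Phi and PhiOZ} together with the $\SL(2)$-case $\Phi\Psi\cong\mathrm{id}$, $\Psi\Phi\cong\mathrm{id}$), hence fully faithful on Hom-spaces and compatible with Yoneda products. By Corollary~\ref{cor:Phi(O0rho)} it sends $O_0\otimesk\xi$ to $O_{E(\xi)}(-1)$ when $\xi\neq\rho_0$, and sends $O_0\otimesk\rho_0$ to $\omega_F[1]$. Writing out the image of (\ref{eq:0th cup prod}) under $\Phi_{O_\cZ}^*$, and then applying Serre duality on $Y$ to the case $\rho=\rho_0$ (shifting $[2]$ and tensoring with $K_Y$ so that $\omega_F[1]$ appears as a source rather than a target), gives exactly (\ref{eq:cup prod DcY}) or (\ref{eq:2nd cup prod DcY}) according to whether $\rho\neq\rho_0$ or $\rho=\rho_0$; the point $O_y$ corresponds to $O_{Z_y}$ under $\Phi_{O_\cZ}^*$ up to the same adjustments.

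Finally, $(4)\Leftrightarrow(5)$ is exactly Lemma~\ref{lemma:cup prod}~(1) and (2), where the geometric criterion $E(\xi)E(\rho)=1$ with $y\in E(\xi)\cap E(\rho)$ (respectively $E(\xi)F\neq 0$ with $y\in E(\xi)$) records precisely when the Yoneda/evaluation pairing on $Y$ is surjective onto a skyscraper at $y$. The main obstacle I expect is step $(3)\Leftrightarrow(4)$: one has to be careful that the twists by $K_Y$, $K_U$ and the shift $[2]$ coming from Corollary~\ref{cor:rightadj of PhiOZ} match up so that the Yoneda cup product on the $D^G(U)$ side really transforms into the pairing written in (\ref{eq:cup prod DcY}) and (\ref{eq:2nd cup prod DcY}), and that the two $O_y$'s appearing in the two factors of the cup product really come from the same closed point $y\in F$. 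Once this bookkeeping is done, the rest is a straightforward chain of citations.
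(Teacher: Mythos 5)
Your proposal is correct and follows essentially the same route as the paper, whose proof is literally the statement that the theorem summarizes Subsections~\ref{subsec:socle OU/mIy}--\ref{subsec:sl2}: $(1)\Leftrightarrow(2)$ is Lemma~\ref{lemma:equiv not splits} applied to $(u_3)$, $(2)\Leftrightarrow(3)$ is the discussion of the Yoneda pairing in Subsection~\ref{subsec:cup products}, $(3)\Leftrightarrow(4)$ is the translation under the equivalence $\Phi_{O_{\cZ}}^*$ via Corollary~\ref{cor:Phi(O0rho)} and Serre duality as carried out in Subsection~\ref{subsec:sl2}, and $(4)\Leftrightarrow(5)$ is Lemma~\ref{lemma:cup prod}. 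The bookkeeping issue you flag in the $\rho=\rho_0$ case is exactly the point the paper handles by passing through $\Ext^2_{O_Y}(-,\omega_F)$ before dualizing to obtain (\ref{eq:2nd cup prod DcY}).
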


See also Theorem \ref{thm:cluster-quiver}~(1) for $G\not\subset\SL(2)$.

\section{Semi-orthogonal projections}
\label{sec:semi-orth proj}

\subsection{Semi-orthogonal projection of $O_0 \otimesk \rho$}
\label{subsec:proj to O0rho}Let $A=D(Y)$, $B=D^G_c(U)$, 
$F=\Phi_{O_\cZ}$ and $H=\Phi_{O_\cZ}^*$. 
We apply 
\cite[Proposition 1.5]{BK} to them. 
Let $\image(F)=\{b\in B; b\simeq Fa\ \text{for some $a\in A$}\}$ and 
$\Ker(H)=\{c\in B; Hc\simeq 0\}$. 
By Corollary \ref{cor:Phi and PhiOZ}, $F=\Phi_{O_\cZ}$ is fully faithful because 
$\Phi$ is fully faithful by Theorem \ref{thm:Ishii fully faithful}. 
Moreover $F=\Phi_{O_\cZ}:A\to B$ has a right adjoint 
$H=\Phi_{O_\cZ}^*:B\to A$ by Corollary \ref{cor:rightadj of PhiOZ}. 
Hence $HF\simeq\id_A$. 
It follows that there is a semi-orthogonal decomposition $(\Ker(H),\image F)$ 
of $B=D^G_c(U)$. Therefore $HF$ is the projection of $B$ to the subcategory 
$\image(F)$, 
which we call {\em a semi-orthogonal projection}. \par
Let $\rho$ be an irreducible representation of $G$. 
In what follows we study the semi-orthogonal projection 
$HF=\Phi_{O_\cZ}\circ \Phi_{O_\cZ}^*$ of $O_0 \otimesk \rho$.
In Subsection~\ref{subsec:proj to O0rho} 
we study the image by $HF$ of $O_0 \otimesk \rho$ 
for $\rho$ non-trivial special, while we 
study the image by $HF$ of $O_0 \otimesk \rho_0$ for $\rho_0$ trivial
in Subsection~\ref{subsec:proj to O0rho0}. Note that if $\rho$ is non-special, this vanishes by Corollary \ref{cor:Phi(O0rho)}.\par
In this subsection, we assume that $\rho$ is non-trivial special.  
We shall prove Theorem \ref{thm:socle-special} 
analogous to \cite[Theorem~7.2]{Ishii02}.

We quote \cite[Lemma 2.4]{Ishii-Ueda15}:
\begin{sublemma}\label{lemma:Ueda-Ishii}
Let $f:Y\to X$ be a proper surjective morphism of surfaces 
with $X$ affine.  
Let $\cE$ and $\cF$ be coherent $O_Y$-modules. 
If 
$\cE$ is generated by global sections and if $H^1(Y,\cF)=0$, 
then $H^q(Y,\cE\otimes_{O_Y}\cF)=0$ for $q>0$.
\end{sublemma}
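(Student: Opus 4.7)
The plan is to combine a cohomology-vanishing statement for coherent sheaves on $Y$ with a finite presentation of $\cE$ extracted from the global-generation hypothesis.

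First I would show that $H^q(Y,\cG)=0$ for every $q\ge 2$ and every coherent $O_Y$-module $\cG$. Since $f$ is proper and $X$ is affine, the Leray spectral sequence collapses to $H^q(Y,\cG)\simeq\Gamma(X,R^qf_*\cG)$. Because $f$ is a surjection between two-dimensional schemes, every fiber has dimension at most $1$: a two-dimensional component of a fiber would be a whole component of $Y$ collapsed to a point, contradicting surjectivity onto the two-dimensional $X$. Hence $R^qf_*\cG=0$ for $q\ge 2$, and the vanishing follows.

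Second, since $f$ is proper, $f_*\cE$ is a coherent sheaf on the affine scheme $X$, so $H^0(Y,\cE)=\Gamma(X,f_*\cE)$ is finitely generated as a $\Gamma(X,O_X)$-module. Global generation of $\cE$ means that its global sections generate every stalk, and since any global section is a $\Gamma(X,O_X)$-linear combination of a fixed finite generating set $s_1,\dots,s_N$ of $H^0(Y,\cE)$, those $s_i$ already generate every stalk of $\cE$. This produces a surjection $O_Y^{\oplus N}\twoheadrightarrow\cE$ with $N$ finite. Tensoring with $\cF$ and letting $\cK'$ be the kernel of the induced surjection $\cF^{\oplus N}\twoheadrightarrow\cE\otimes_{O_Y}\cF$, the long exact cohomology sequence of
$$0\to\cK'\to\cF^{\oplus N}\to\cE\otimes_{O_Y}\cF\to 0$$
yields
$$H^1(Y,\cF)^{\oplus N}\to H^1(Y,\cE\otimes_{O_Y}\cF)\to H^2(Y,\cK').$$
The left-hand term vanishes by hypothesis and the right-hand term by the first step, so $H^1(Y,\cE\otimes_{O_Y}\cF)=0$. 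Vanishing in degrees $q\ge 2$ is then immediate from the first step applied to $\cE\otimes_{O_Y}\cF$ itself.

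The main obstacle will be the cohomological-dimension assertion in the first step. Its essential input is the fiber-dimension bound on $f$, which in turn uses both surjectivity and the fact that source and target are two-dimensional; without this reduction one cannot close the long exact sequence at the $H^2$ end, and the argument collapses. Once this bound is in hand, the remainder of the proof is a routine long-exact-sequence chase.
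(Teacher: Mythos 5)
Your argument is correct, and it is essentially the standard one: the paper itself gives no proof of this lemma but merely quotes \cite[Lemma 2.4]{Ishii-Ueda15}, whose proof runs along exactly your lines --- a finite surjection $O_Y^{\oplus N}\twoheadrightarrow\cE$ obtained from global generation together with coherence of $f_*\cE$ over the affine $X$, tensored with $\cF$, closed off by the vanishing $H^2(Y,-)=0$ for all coherent sheaves coming from the one-dimensionality of the fibers of $f$. The only point worth flagging is that your fiber-dimension step tacitly uses irreducibility of $Y$ (for a disconnected $Y$ containing a component such as $\bP^2$ collapsed to a point, the statement itself fails, e.g.\ with $\cF=\omega_{\bP^2}$), but this is harmless since in the paper $Y$ is the minimal resolution and hence integral.
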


\begin{subcor}
\label{cor:Ueda-Ishii}
Let $f:Y\to X$ be a minimal resolution 
of a normal affine surface $X$ with 
a rational singular point $P$, $F$ the fundamental divisor of $Y$ and 
$\cN$ a coherent sheaf on $Y$. 
Then $H^1(Y, \cN\otimes_{O_Y}O_F)=0$ if and only if $H^1(Y, \cN)=0$. 
\end{subcor}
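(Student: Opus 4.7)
Here is the plan. The assertion is an ``if and only if'', so the plan is to prove the two implications separately.

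One direction, $H^1(Y,\cN)=0\Rightarrow H^1(Y,\cN\otimes_{O_Y}O_F)=0$, is immediate from Sublemma~\ref{lemma:Ueda-Ishii}: since $X$ is affine, $O_F$ is a quotient of $O_Y=f^*O_X$ and is therefore globally generated, so applying that lemma with $\cE=O_F$ and $\cF=\cN$ yields the conclusion.

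For the other direction, assume $H^1(Y,\cN\otimes O_F)=0$; here the rationality hypothesis is essential. The strategy is to prove the stronger statement $H^1(Y,\cN\otimes_{O_Y}O_{nF})=0$ for every $n\geq 1$, and then to deduce $R^1f_*\cN=0$ (hence $H^1(Y,\cN)=0$, because $X$ is affine) by the Grothendieck theorem on formal functions, combined with Artin's identification $\fm_P^nO_Y=O_Y(-nF)$ that is available for the minimal resolution of a rational surface singularity. Under this identification, the $\fm_P$-adic completion of $R^1f_*\cN$ at $P$ equals $\varprojlim_n H^1(Y,\cN\otimes O_{nF})$, which vanishes if each term does. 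The vanishings are proved by induction on $n$, the case $n=1$ being the hypothesis. For the inductive step, the short exact sequence
\[
0\to O_F(-(n-1)F)\to O_{nF}\to O_{(n-1)F}\to 0
\]
of $O_Y$-modules, derived-tensored with $\cN$, yields the exact fragment
\[
H^1(Y,\cN\otimes O_F(-(n-1)F))\to H^1(Y,\cN\otimes O_{nF})\to H^1(Y,\cN\otimes O_{(n-1)F}).
\]
The right term vanishes by induction. For the left term, apply Sublemma~\ref{lemma:Ueda-Ishii} with $\cE=O_Y(-(n-1)F)$ and $\cF=\cN\otimes O_F$: the first is globally generated, being the ideal $\fm_P^{n-1}O_Y$ pulled back from the affine base $X$, and the second has vanishing $H^1$ by hypothesis; this gives $H^1(\cN\otimes O_F\otimes_{O_Y}O_Y(-(n-1)F))=H^1(\cN\otimes O_F(-(n-1)F))=0$ after using $O_Y(-(n-1)F)\otimes_{O_Y}O_F\cong O_F(-(n-1)F)$.

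The main obstacle will be the control of the higher $\op{Tor}$ contributions when $\cN$ is not locally free: in passing from the short exact sequence above to its derived tensor with $\cN$, one must confirm that the underived and the hypercohomological $H^1$ coincide in the displayed fragment. This is routine --- $O_F$ and $O_{nF}$ admit two-term locally free resolutions over $O_Y$, so only $\op{Tor}_1$ contributes, and being supported on $F$ these Tor sheaves have vanishing $H^2$; the hypercohomology spectral sequence therefore degenerates in the relevant bidegrees, giving agreement between derived and underived $H^1$. The other key ingredient, Artin's identification $\fm_P^nO_Y=O_Y(-nF)$, is classical for rational surface singularities and can be quoted.
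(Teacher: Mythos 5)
Your argument is correct and is essentially the proof the paper has in mind: the paper simply defers to \cite[Lemma~3.1]{Ishii92}, whose content is exactly your chain (Artin's $\fm_P^nO_Y=O_Y(-nF)$, induction on $n$ via the decomposition sequence $0\to O_F(-(n-1)F)\to O_{nF}\to O_{(n-1)F}\to 0$ together with Lemma~\ref{lemma:Ueda-Ishii}, then the theorem on formal functions and affineness of $X$). You also correctly supply the one point the paper's remark flags as the only change needed — controlling the $\op{Tor}_1$ terms when $\cN$ is not locally free, which is harmless since they are supported on the one-dimensional set $F$ and hence have no $H^2$.
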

\begin{proof}
The proof is the same as in \cite[Lemma~3.1]{Ishii92},
where we don't need the assumption that $\cE$ (or $\cN$ here) is locally free.
\end{proof}

\begin{sublemma}\label{lemma:sheaf}
If $H^1(Y,\cC)=0$,
then $\Phi_{\cO_{\cZ}}(\cC)$ is a sheaf such that 
\begin{equation*}\label{eq:piphi}
\pi_*(\Phi_{O_\cZ}(\cC))
\simeq \bigoplus_{\sigma \in \Irr(G)} f_*(\cC \otimes_{O_Y} 
\cM_{\sigma}) \otimesk  \sigma.
\end{equation*}
\end{sublemma}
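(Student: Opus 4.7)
The plan is to rewrite $\Phi_{O_\cZ}(\cC)$ as the ordinary composition $q_* p^*(\cC)$ using the universal cluster diagram, and then apply the $G$-equivariant decomposition of $p_* O_\cZ$ from Lemma~\ref{sublemma:direct image of OZ and OU}~(5). Concretely, I would use the closed immersion $\iota:\cZ \hookrightarrow Y \times_k U$, for which $\pi_Y \circ \iota = p$ and $\pi_U \circ \iota = q$. Writing $O_\cZ = \iota_* O_\cZ$ and applying the projection formula for $\iota$,
$$\pi_Y^*(\cC) \otimes^{\bL}_{O_{Y \times_k U}} O_\cZ \simeq \iota_*\, \bL p^*(\cC).$$
Since $\cZ$ is finite flat of degree $|G|$ over $Y$, $p$ is flat, so $\bL p^* = p^*$; and $q = \pi_U \circ \iota$ is finite, so $\bR q_* = q_*$. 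Therefore $\Phi_{O_\cZ}(\cC) = q_* p^*(\cC)$ is concentrated in degree zero, which proves the first assertion; notably this step does not require the hypothesis $H^1(Y,\cC)=0$.

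For the formula I would use the commutativity $\pi \circ q = f \circ p$ from the diagram of Subsection~\ref{subsec:minimal resol} together with functoriality of pushforward and the projection formula for the finite flat morphism $p$:
$$\pi_* \Phi_{O_\cZ}(\cC) \;=\; f_* p_* p^*(\cC) \;\simeq\; f_*\bigl(\cC \otimes_{O_Y} p_* O_\cZ\bigr).$$
Substituting the $G$-equivariant decomposition $p_* O_\cZ \simeq \bigoplus_{\sigma \in \Irr(G)} \cM_\sigma \otimesk \sigma$ from Lemma~\ref{sublemma:direct image of OZ and OU}~(5) and reading off $\sigma$-isotypic components of the induced $G$-action yields the desired isomorphism.

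The hypothesis $H^1(Y,\cC)=0$ is not needed for the sheaf-theoretic identity above; its role seems to be to ensure that $f_*$ on the right-hand side agrees with $\bR f_*$, which is relevant when the lemma is applied to compute further cohomology. Indeed, each $\cM_\sigma$ is generated by global sections as a full sheaf, so Lemma~\ref{lemma:Ueda-Ishii} gives $H^q(Y,\cC \otimes_{O_Y} \cM_\sigma)=0$ for $q>0$, and since $X$ is affine this forces $R^q f_*(\cC \otimes_{O_Y} \cM_\sigma)=0$. The only genuine point requiring care is tracking $G$-equivariance through the projection formula, but this is already encapsulated in Lemma~\ref{sublemma:direct image of OZ and OU}~(5), so no serious obstacle is anticipated.
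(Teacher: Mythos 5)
Your computation of $\pi_*\Phi_{O_\cZ}(\cC)$ and your identification of where the hypothesis enters (via Lemma~\ref{lemma:Ueda-Ishii} and the global generation of the $\cM_\sigma$) match the paper's argument. But your proof of the first assertion breaks down: the morphism $q=\pi_U\circ\iota:\cZ\to U$ is \emph{not} finite. It is an isomorphism over $U\setminus\{0\}$, but every cluster $Z_y$ with $y$ in the exceptional set is supported at the origin, so $q^{-1}(0)$ contains $p^{-1}(E)$ and is one-dimensional. Hence $\bR q_*\neq q_*$ in general and $\Phi_{O_\cZ}(\cC)=\bR q_*(p^*\cC)$ can have nonzero $R^1$. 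Concretely, for $\cC=O_{E_i}(-2)$ on an exceptional curve $E_i\simeq\bP^1$ one has $H^1(Y,\cC)=k\neq 0$, and the $\rho_0$-isotypic part of $\bR\pi_*\Phi_{O_\cZ}(\cC)$ is $\bR f_*(\cC\otimes_{O_Y}\cM_{\rho_0})=\bR f_*(O_{E_i}(-2))$, which has nonvanishing $R^1$ because $X$ is affine; so $\Phi_{O_\cZ}(\cC)$ is not a sheaf. The hypothesis $H^1(Y,\cC)=0$ is therefore essential for the first assertion, not merely for comparing $f_*$ with $\bR f_*$.

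The repair is exactly the paper's route, most of which you already have in your second half: keep the chain $f_*p_*p^*$ derived throughout, so that
\begin{equation*}
\bR\pi_*\Phi_{O_\cZ}(\cC)\simeq\bR f_*(\cC\otimes_{O_Y}p_*O_\cZ)\simeq\bigoplus_{\sigma\in\Irr(G)}\bR f_*(\cC\otimes_{O_Y}\cM_\sigma)\otimesk\sigma
\end{equation*}
(here $\bR p_*=p_*$ since $p$ is finite, and the projection formula for the finite flat $p$ gives $p_*p^*\cC\simeq\cC\otimes_{O_Y}p_*O_\cZ$). The hypothesis together with Lemma~\ref{lemma:Ueda-Ishii} kills $H^q(Y,\cC\otimes_{O_Y}\cM_\sigma)$ for $q>0$, and since $X$ is affine this means $\bR\pi_*\Phi_{O_\cZ}(\cC)$ is concentrated in degree $0$. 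Because $\pi$ is an affine morphism, $\bR\pi_*$ is exact and faithful on quasi-coherent cohomology, so $\Phi_{O_\cZ}(\cC)$ itself is a sheaf. This last step --- deducing sheafness of $\Phi_{O_\cZ}(\cC)$ from sheafness of its pushforward under the affine morphism $\pi$ --- is the idea missing from your write-up.
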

\begin{proof}
By pushing forward to $X$, we obtain
\begin{equation}\label{eq:pi_*}
\begin{aligned}
\bR\pi_*(\Phi_{O_\cZ}(\cC))
&=\bR(\pi\circ\pi_U)_*(\pi_Y^*\cC 
\otimes_{O_{Y\times_kU}} O_\cZ) \\
&\simeq \bR f_*(\cC \otimes_{O_Y} p_*O_\cZ)  \\
&\simeq \bigoplus_{\sigma \in \Irr(G)} \bR f_*(\cC \otimes_{O_Y} 
\cM_\sigma) \otimesk  \sigma
\end{aligned}
\end{equation}from Lemma~\ref{sublemma:direct image of OZ and OU}.
Since $\cM_\sigma$ is a full $O_Y$-module, which is generated by global sections
by definition,
we have $H^q(\cC \otimes_{O_Y} \cM_\sigma)=0$ for $q > 0$
by Lemma~\ref{lemma:Ueda-Ishii}.
Hence $\bR\pi_*(\Phi_{O_\cZ}(\cC))=\pi_*(\Phi_{O_\cZ}(\cC))$ is a sheaf.
This implies that $\Phi_{O_\cZ}(\cC)$ is a sheaf
because $\pi$ is an affine morphism.
\end{proof}

\begin{sublemma}
\label{lemma:Phi(Mrho dual)}
$\Phi_{O_\cZ}(\cM_\rho^\vee) \simeq O_U \otimesk  \rho$  for $\rho\in\Irr(G)$ special.
\end{sublemma}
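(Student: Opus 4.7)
The plan is to identify $\Phi_{O_\cZ}(\cM_\rho^\vee)$ as a $G$-equivariant locally free $O_U$-module with $G$-invariants equal to $M_{\rho^*}$, and then invoke the equivalence $\Ginv:\LF^G(U)\xrightarrow{\sim}\Rfl(X)$ of Lemma~\ref{lemma:equivalence} to pin down its isomorphism class as $O_U\otimesk\rho$.

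Since $\rho$ is special, $H^1(Y,\cM_\rho^\vee)=0$, so Lemma~\ref{lemma:sheaf} immediately gives that $\Phi_{O_\cZ}(\cM_\rho^\vee)$ is a coherent $G$-equivariant $O_U$-sheaf whose pushforward to $X$ is
\[
\pi_*\Phi_{O_\cZ}(\cM_\rho^\vee)\;\simeq\;\bigoplus_{\tau\in\Irr(G)} f_*(\cM_\rho^\vee\otimes_{O_Y}\cM_\tau)\otimesk\tau.
\]
Taking $G$-invariants (the $\rho_0$-isotypic summand, using $\cM_{\rho_0}=O_Y$), we obtain $f_*(\cM_\rho^\vee)$, and this equals $M_{\rho^*}$ by Lemma~\ref{sublemma:basics}(2) once we verify condition (iii) for $\cM_\rho^\vee$. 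But (iii) reads $H^1(\cM_\rho\otimes_{O_Y}K_Y)=0$, which follows from Lemma~\ref{lemma:Ueda-Ishii}: apply it with $\cE=\cM_\rho$ (generated by global sections, being a full sheaf) and $\cF=K_Y$, using that $H^1(K_Y)=0$ by Grauert--Riemenschneider for the rational singularity $X$.

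Second, I would compute $\Phi_{O_\cZ}(\cM_\rho^\vee)|_{U'}$ on the smooth locus $U'=U\setminus\{0\}$. There every $G$-cluster is a free $G$-orbit in $U'$, so $q:\cZ\to U$ restricts to an isomorphism over $U'$. Hence, via étale descent for $\pi':U'\to X'$, we have a canonical $G$-equivariant identification
\[
\Phi_{O_\cZ}(\cM_\rho^\vee)|_{U'}\;\simeq\;(\pi')^*(\cM_\rho^\vee|_{X'})\;\simeq\;(\pi')^*(M_{\rho^*}|_{X'})\;\simeq\;O_{U'}\otimesk\rho\;=\;(O_U\otimesk\rho)|_{U'}.
\]
On the other hand, $\Phi_{O_\cZ}^*(O_U\otimesk\rho)\simeq\cM_\rho^\vee$ may be computed from Corollary~\ref{cor:rightadj of PhiOZ}, using $K_U\simeq O_U\otimesk\omega$ as a $G$-sheaf (with $\omega=\det\rho_\nat$), the direct formula $\Psi(O_U\otimesk V)\simeq\cM_{V^*}$ for an irreducible representation $V$, and the identification $\cM_{\omega^*}\simeq K_Y$. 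The counit of the adjunction $(\Phi_{O_\cZ},\Phi_{O_\cZ}^*)$ thus provides a canonical morphism $\Phi_{O_\cZ}(\cM_\rho^\vee)\to O_U\otimesk\rho$ which restricts to the $U'$-identification above.

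Finally, I would conclude via Lemma~\ref{lemma:equivalence}. Both $\Phi_{O_\cZ}(\cM_\rho^\vee)$ and $O_U\otimesk\rho$ are $G$-equivariant coherent $O_U$-modules agreeing on $U'$ and having $G$-invariant pushforwards canonically isomorphic to the reflexive module $M_{\rho^*}$. Since $\Ginv$ recovers an element of $\LF^G(U)$ uniquely from its reflexive $G$-invariants on $X$, the counit map must be an isomorphism globally. The principal obstacle is precisely this last step: verifying that $\Phi_{O_\cZ}(\cM_\rho^\vee)$ genuinely belongs to $\LF^G(U)$, not merely to $\Coh^G(U)$. This is handled by combining the local freeness on $U'$ (established above) with the reflexivity of its pushforward's $G$-invariants and the uniqueness clause of the equivalence in Lemma~\ref{lemma:equivalence} (cf.\ Remark~\ref{subrem:equiv of cF and cF^G}).
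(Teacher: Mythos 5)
Your overall strategy is the same as the paper's (show the image is a $G$-equivariant locally free $O_U$-module whose $G$-invariant pushforward is $M_{\rho^*}$, then conclude by the equivalence $\Ginv:\LF^G(U)\to\Rfl(X)$), and the first part of your argument --- that $\Phi_{O_\cZ}(\cM_\rho^\vee)$ is a sheaf with $\pi_*\Phi_{O_\cZ}(\cM_\rho^\vee)\simeq\bigoplus_\tau f_*(\cM_\rho^\vee\otimes_{O_Y}\cM_\tau)\otimesk\tau$ and $(\pi_*\Phi_{O_\cZ}(\cM_\rho^\vee))^G\simeq f_*(\cM_\rho^\vee)\simeq M_{\rho^*}$ --- matches the paper. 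But there is a genuine gap exactly at the step you flag as the ``principal obstacle.'' Local freeness of $\cF:=\Phi_{O_\cZ}(\cM_\rho^\vee)$ on $U'=U\setminus\{0\}$ together with reflexivity of $(\pi_*\cF)^G$ does \emph{not} imply $\cF\in\LF^G(U)$: for example $\fm\otimesk\rho$ (for $\rho$ non-trivial) is a $G$-equivariant coherent sheaf, locally free on $U'$, with $(\pi_*(\fm\otimesk\rho))^G=M_{\rho^*}$ reflexive, yet it is not locally free at the origin. The uniqueness clause of Lemma~\ref{lemma:equivalence} only distinguishes objects \emph{already known} to lie in $\LF^G(U)$, so invoking it here is circular; likewise, the counit map $\Phi_{O_\cZ}(\cM_\rho^\vee)\to O_U\otimesk\rho$ being an isomorphism over $U'$ does not rule out a kernel or cokernel concentrated at the origin (the inclusion $\fm\otimesk\rho\hookrightarrow O_U\otimesk\rho$ again illustrates this).

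The paper closes this gap by controlling \emph{all} isotypic components, not just the $G$-invariant one: for every $\sigma\in\Irr(G)$ it verifies that $\cM_\rho^\vee\otimes_{O_Y}\cM_\sigma$ satisfies conditions (i) and (iii) of Lemma~\ref{sublemma:basics}(1) --- the point being $H^1(\cM_\rho\otimes_{O_Y}\cM_\sigma^\vee\otimes_{O_Y}\omega_Y)=0$, obtained from Lemma~\ref{lemma:Ueda-Ishii} with $\cE=\cM_\rho$ globally generated and $H^1(\cM_\sigma^\vee\otimes_{O_Y}\omega_Y)=0$ from fullness of $\cM_\sigma$ --- so that each $f_*(\cM_\rho^\vee\otimes_{O_Y}\cM_\sigma)$, hence all of $\pi_*\cF$, is reflexive on $X$. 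Since $\pi$ is finite, the vanishing of local cohomology $H^1_{\{0\}}(X,\pi_*\cF)=0$ transfers to $H^1_{\{0\}}(U,\cF)=0$, giving reflexivity of $\cF$ as an $O_U$-module, and then local freeness because $U$ is regular (Lemma~\ref{sublemma:reflexive}(2)). Once that is in place, your concluding appeal to Lemma~\ref{lemma:equivalence} and $(O_U\otimesk\rho)^G=M_{\rho^*}$ is exactly the paper's finish; your detour through the counit and the explicit computation of $\Phi^*_{O_\cZ}(O_U\otimesk\rho)$ (which itself quietly uses $\cM_{\rho\otimes\det\rho_\nat^\vee}\simeq\cM_\rho\otimes_{O_Y}K_Y$, needing its own justification) is then unnecessary.
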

\begin{proof}Since $\rho$ is special, 
$\Phi_{O_\cZ}(\cM_\rho^\vee)$ is a sheaf by Lemma~\ref{lemma:sheaf}.
Since $\cM_{\sigma}$ is full for any $\sigma\in\Irr(G)$  
by Lemma~\ref{sublemma:direct image of OZ and OU}~(4), 
we have $H^1(\cM_{\sigma}^{\vee}\otimes_{O_Y}\omega_Y)=0$. 
Hence by Lemma~\ref{lemma:Ueda-Ishii}, 
$H^1(\cM_{\rho}\otimes_{O_Y}\cM_{\sigma}^{\vee}\otimes_{O_Y}\omega_Y)=0$.
Therefore $\cM_{\rho}^{\vee}\otimes_{O_Y}\cM_{\sigma}$ satisfies the conditions {(i) and (iii)} of Lemma~\ref{sublemma:basics}~(1). Hence 
$f_*(\cM_{\rho}^{\vee}\otimes_{O_Y}\cM_{\sigma})$ is reflexive 
by Lemma~\ref{sublemma:basics}~(2).
It follows that
$\pi_*(\Phi_{O_\cZ}(\cM_\rho^\vee))$
is a reflexive $O_X$-module, which is characterized by the vanishing 
$$0=H^1_{\{0\}}(X, \pi_*(\Phi_{O_\cZ}(\cM_{\rho}^{\vee}))) 
\simeq H^1_{\{0\}}(U, \Phi_{O_\cZ}(\cM_\rho^\vee)),$$
and therefore  
$\Phi_{O_\cZ}(\cM_\rho^\vee)$ is a reflexive $O_U$-module.\par
Since $U$ is regular, 
$\Phi_{O_\cZ}(\cM_\rho^\vee)$ is a locally free $O_U$-module 
by Lemma~\ref{sublemma:reflexive} (2).
It follows 
from \eqref{eq:pi_*}, Lemma~\ref{sublemma:direct image of OZ and OU} 
and  Corollary~\ref{cor:dual}
that
$$\pi_*(\Phi_{O_\cZ}(\cM_\rho^\vee))^G \simeq \bigoplus_{\sigma \in \Irr(G)} (f_*(\cM_\rho^\vee \otimes_{O_Y} \cM_\sigma)\otimesk  \sigma)^G=f_*(\cM_\rho^\vee)\simeq M_{\rho^*}.$$

By Lemma \ref{lemma:equivalence},  
$\Phi_{O_\cZ}(\cM_\rho^\vee)$ is uniquely determined by 
the reflexive module $\pi_*(\Phi_{O_\cZ}(\cM_\rho^\vee))^G$. 
Since $(O_U\otimesk \rho)^G=M_{\rho^*}$, we obtain the assertion.
\end{proof}

\begin{subdefn}
For special representations $\rho$ and $\sigma$, we define a non-negative integer $a_{\rho\sigma}$ as follows:
$$
a_{\rho\sigma}=\begin{cases}
E(\rho)E(\sigma) & \text{$\rho$, $\sigma$ non-trivial, $\rho\ne \sigma$}\\
-FE(\rho) &\rho \ne \rho_0,\, \sigma=\rho_0 \\
\max{\{-(K_Y+F)E(\rho), 0\}} & \rho=\rho_0,\, \sigma\ne \rho_0 \\
0 & \rho=\sigma
\end{cases}
$$
\end{subdefn}

\begin{subprop}\label{prop:Phi(OErho(-1))}
Let $\rho\in\Irr(G)$ be non-trivial special, and 
$E(\rho)$ the corresponding exceptional curve.
Then, the following hold:
\begin{enumerate}
\item $\Phi_{O_\cZ}(O_{E(\rho)}(-1))$ is 
a quotient sheaf of $O_U \otimesk \rho$,
\item let $J(\rho)$ be the kernel of the surjection 
$O_U \otimesk \rho \to \Phi_{O_\cZ}(O_{E(\rho)}(-1))$.
Then 
\begin{align*}
J(\rho)/\fm J(\rho)&\simeq 
\bigoplus_{
\begin{matrix}
\sigma\in\Irr(G):\text{special}
\end{matrix}}
\sigma^{\oplus a_{\rho,\sigma}},\\
O_U \otimesk \rho/J(\rho)&\simeq \Phi_{O_\cZ}(O_{E(\rho)}(-1)),
\end{align*}
\item 
$J(\rho)$ is the $O_U$-submodule of $O_U \otimesk  \rho$ generated 
by all the special representations in $\fm \otimesk  \rho$.
\end{enumerate}
\end{subprop}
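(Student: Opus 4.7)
The plan has three parts matching the proposition. For (1) I would first construct a surjection on $Y$ and then transport it to $U$ via $\Phi_{O_\cZ}$. Since $\rho$ is special, $\cM_\rho$ is a full sheaf—locally free and globally generated—and by Theorem~\ref{thm:special mckay corresp} we have $c_1(\cM_\rho) \cdot E(\rho) = 1$. Thus $\cM_\rho \otimes_{O_Y} O_{E(\rho)}$ on $E(\rho) \simeq \bP^1$ splits as $O_{E(\rho)}^{\oplus r-1} \oplus O_{E(\rho)}(1)$ with $r = \rank \cM_\rho$; dualizing and projecting onto the $O_{E(\rho)}(-1)$ summand yields a surjection $\cM_\rho^\vee \twoheadrightarrow O_{E(\rho)}(-1)$ with some kernel $K$. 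The long exact cohomology sequence on $Y$, together with $H^i(O_{E(\rho)}(-1)) = 0$ and the specialness $H^1(\cM_\rho^\vee) = 0$, gives $H^1(Y, K) = 0$. By Lemma~\ref{lemma:sheaf}, both $\Phi_{O_\cZ}(K)$ and $\Phi_{O_\cZ}(O_{E(\rho)}(-1))$ are sheaves; combining with Lemma~\ref{lemma:Phi(Mrho dual)} and applying $\Phi_{O_\cZ}$ to $0 \to K \to \cM_\rho^\vee \to O_{E(\rho)}(-1) \to 0$ collapses the distinguished triangle to a short exact sequence $0 \to J(\rho) \to O_U \otimesk \rho \to \Phi_{O_\cZ}(O_{E(\rho)}(-1)) \to 0$ with $J(\rho) := \Phi_{O_\cZ}(K)$.

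For (2) I would compute the multiplicity of each $\sigma \in \Irr(G)$ in $J(\rho)/\fm J(\rho)$ via the identification with $\dim_k \Hom^0_{D^G_c(U)}(J(\rho), O_0 \otimesk \sigma)$, applying $\Hom^{\bullet}_{D^G_c(U)}(-, O_0 \otimesk \sigma)$ to the sequence from (1). Since $\Hom^k(O_U \otimesk \rho, O_0 \otimesk \sigma) = k$ when $(\sigma, k) = (\rho, 0)$ and zero otherwise, the long exact sequence gives, for $\sigma \neq \rho$, an isomorphism $\Hom^0(J(\rho), O_0 \otimesk \sigma) \simeq \Hom^1(O_{E(\rho)}(-1), \Phi_{O_\cZ}^*(O_0 \otimesk \sigma))$ via the adjunction of Corollary~\ref{cor:rightadj of PhiOZ}. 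By Corollary~\ref{cor:Phi(O0rho)} and Lemma~\ref{lemma:cup product coh}, this equals $k^{E(\rho) \cdot E(\sigma)}$ for non-trivial special $\sigma$ (giving $a_{\rho, \sigma}$) and zero for non-special $\sigma$. For $\sigma = \rho_0$ it equals $\Ext^2_{O_Y}(O_{E(\rho)}(-1), \omega_F)$, which I would evaluate using the sequence $0 \to K_Y \to K_Y(F) \to \omega_F \to 0$ and Serre duality for compactly supported sheaves on the smooth surface $Y$: $\Ext^2(O_{E(\rho)}(-1), K_Y) = 0$, while $\Ext^2(O_{E(\rho)}(-1), K_Y(F))^\vee \simeq H^0(O_{E(\rho)}(-1 - F \cdot E(\rho))) = k^{-F \cdot E(\rho)}$, matching $a_{\rho, \rho_0}$. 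For $\sigma = \rho$, the natural map $\Hom^0(\Phi_{O_\cZ}(O_{E(\rho)}(-1)), O_0 \otimesk \rho) \to \Hom^0(O_U \otimesk \rho, O_0 \otimesk \rho)$ of one-dimensional spaces is a nonzero composition, hence an isomorphism, forcing $a_{\rho, \rho} = 0$.

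For (3), the inclusion of $J(\rho)$ into the submodule generated by all special representations in $\fm \otimesk \rho$ follows from (2): $J(\rho)$ is $O_U$-generated by special $k[G]$-submodules, and since $a_{\rho, \rho} = 0$ the natural map $J(\rho) \to (O_U \otimesk \rho)/(\fm \otimesk \rho) \simeq \rho$ must vanish (any nonzero image would contribute a copy of $\rho$ to $J(\rho)/\fm J(\rho)$), so $J(\rho) \subset \fm \otimesk \rho$. For the reverse direction, take any special $k[G]$-submodule $W \subset \fm \otimesk \rho$ with $W \simeq \tau$, and compute the $k[G]$-structure of $C := \Phi_{O_\cZ}(O_{E(\rho)}(-1))$ via Lemma~\ref{lemma:sheaf}: $\pi_* C \simeq \bigoplus_\sigma f_*(\cM_\sigma \otimes_{O_Y} O_{E(\rho)}(-1)) \otimesk \sigma$. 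A case check on $\cM_\sigma \otimes O_{E(\rho)}$ shows the only special $\sigma$ with nonzero contribution is $\sigma = \rho$, and then $C_\rho$ is a single copy of $\rho$. Since $C/\fm C$ is a nonzero $G$-quotient of $\rho$, it equals $\rho$, so the surjection $C_\rho \twoheadrightarrow (C/\fm C)_\rho$ is an isomorphism by dimensions, forcing $\fm C \cap C_\rho = 0$. Hence for $\tau \neq \rho$ special the $\tau$-isotypic of $C$ vanishes and $W$ maps to zero, while for $\tau = \rho$ the image of $W$ lies in $\fm C$ which has no $\rho$-isotypic; in either case $W \subset J(\rho)$. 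The main obstacle I anticipate is the clean Serre-duality bookkeeping on the non-proper $Y$ for evaluating $\Ext^2(O_{E(\rho)}(-1), \omega_F)$, and ensuring the non-special part of $C$ (which is nonempty when $G \not\subset \SL(2,k)$) does not disturb the $G$-isotypic analysis in part (3).
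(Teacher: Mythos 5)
Your proposal is correct and follows essentially the same route as the paper: the splitting $\cM_\rho|_{E(\rho)}\simeq O_{E(\rho)}(1)\oplus O_{E(\rho)}^{\oplus\dim\rho-1}$ giving the kernel sequence for $\cM_\rho^\vee\to O_{E(\rho)}(-1)$, the sheaf/fullness lemmas to collapse the triangle, the adjunction $\Ext^1(O_{E(\rho)}(-1),\Phi_{O_\cZ}^*(O_0\otimesk\sigma))$ for the multiplicities, and the isotypic analysis of $\pi_*\Phi_{O_\cZ}(O_{E(\rho)}(-1))$ for part (3). The only deviations are cosmetic (your use of $0\to K_Y\to K_Y(F)\to\omega_F\to 0$ for the $\sigma=\rho_0$ count, and a slightly more explicit treatment of the $\sigma=\rho$ case), and both lead to the same conclusions.
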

\begin{proof}By Lemma~\ref{sublemma:basics} and 
Theorem \ref{thm:special mckay corresp}, there is an isomorphism 
\begin{equation}\label{eq:split}
\cM_\rho|_{E(\rho)} \simeq O_{E(\rho)}(1) 
\oplus O_{E(\rho)}^{\oplus \dim \rho-1}.
\end{equation}
Hence there is a surjection
$j(\rho):\cM_\rho^\vee\to O_{E(\rho)}(-1)$. Let 
$\cK_\rho=\Ker(j(\rho))$. Then there is an exact sequence
\begin{equation}\label{eq:Krho}
0 \to \cK_\rho \to \cM_\rho^\vee \to O_{E(\rho)}(-1) \to 0.
\end{equation}
Since $\rho$ is special, $H^1(\cM_{\rho}^{\vee})=0$. 
Hence $H^1(Y,\cK_\rho)=0$ by \eqref{eq:Krho}. 
Thus Lemma~\ref{lemma:sheaf} implies that
any of $\Phi_{O_\cZ}(\cK_\rho)$, $\Phi_{O_\cZ}(\cM^{\vee})$ 
and $\Phi_{O_\cZ}(O_{E(\rho)}(-1))$
is a sheaf at degree 0. 
Applying $\Phi_{O_\cZ}$ to \eqref{eq:Krho},  
we obtain a distinguished triangle
\begin{equation*}
\Phi_{O_\cZ}(\cK_\rho) \to \Phi_{O_\cZ}(\cM_\rho^\vee) \to \Phi_{O_\cZ}(O_{E(\rho)}(-1))
\to \Phi_{O_\cZ}(\cK_\rho)[1],
\end{equation*}which reduces to
 a short exact sequence of sheaves
$$
0 \to \Phi_{O_\cZ}(\cK_\rho) \to \Phi_{O_\cZ}(\cM_\rho^\vee) \to \Phi_{O_\cZ}(O_{E(\rho)}(-1))
\to 0.
$$

Thus $\Phi_{O_\cZ}(O_{E(\rho)}(-1))$ is a quotient sheaf of
$\Phi_{O_\cZ}(\cM_\rho^\vee)\simeq O_U \otimesk  \rho$ 
by Lemma~\ref{lemma:Phi(Mrho dual)}. 
Hence (1) is proved.

Let $\sigma$ be an irreducible representation of $G$.
Then  
the multiplicity of $\sigma$ in $J(\rho)/\fm J(\rho)$ 
is given by the dimension of
\begin{align*}
\Ext^1(O_U \otimesk  \rho/J(\rho), O_0\otimesk  \sigma)
&= \Ext^1(\Phi_{O_\cZ}(O_{E(\rho)}(-1)), O_0\otimesk  \sigma) \\
&\simeq \Ext^1(O_{E(\rho)}(-1), \Phi_{O_\cZ}^*(O_0 \otimesk  \sigma)).
\end{align*}
If $\sigma$ is non-special, then this vanishes by Corollary \ref{cor:Phi(O0rho)}.
If $\sigma$ is special non-trivial, 
$\dim \Ext^1(O_{E(\rho)}(-1), \Phi_{O_\cZ}^*(O_0 \otimesk  \sigma))
=a_{\rho,\sigma}$  follows from Corollary \ref{cor:Phi(O0rho)} and 
Lemma~\ref{lemma:cup product coh}. If $\sigma=\rho_0$, then 
by Corollary \ref{cor:Phi(O0rho)}
\begin{align*}
\Ext^1(O_{E(\rho)}(-1), \Phi_{O_\cZ}^*(O_0 \otimesk  \sigma))&
\simeq \Ext^1(O_{E(\rho)}(-1), \omega_F[1])\\
&\simeq \Ext^2(O_{E(\rho)}(-1), (\omega_Y+F)\otimes_{O_Y} O_F)\\
&\simeq \Ext^0(O_F(F), O_{E(\rho)}(-1))^{\vee}\\
&\simeq \Ext^0(O_F, O_{E(\rho)}(-1-FE(\rho)))^{\vee},
\end{align*}whose dimension is equal to $a_{\rho,\rho_0}$.  Hence we derive (2). 

Finally we shall prove (3). 
If $\sigma=\rho$, then
$H^0(\cM_\rho \otimes_{O_Y} O_{E(\rho)}(-1)) \simeq k$  by \eqref{eq:split}. 
Meanwhile
for any special $\sigma\ (\ne \rho)$, we have
$\cM_{\sigma}|_{E(\rho)} \simeq O_{E(\rho)}^{\oplus \dim \sigma}$ and 
$H^0(\cM_\rho \otimes_{O_Y} O_{E(\rho)}(-1))=0$. 
By Lemma~\ref{lemma:sheaf} 
\begin{equation}\label{eq:pi*Phi(OErho(-1))2}
\pi_*\Phi_{O_\cZ}(O_{E(\rho)}(-1)) 
\simeq \rho\oplus 
\bigoplus_{
\begin{matrix}
\sigma \in \Irr(G):
\text{non-special}
\end{matrix}} H^0(\cM_\sigma \otimes_{O_Y} O_{E(\rho)}(-1)) \otimesk  \sigma.
\end{equation}

Therefore $(\fm \otimesk  \rho)/J(\rho)$ 
contains no special representations. 
Since $J(\rho)$ is generated by special representations by (2), we obtain (3).
\end{proof}

By \cite[Theorem 7.2]{Ishii02},  
$y\in E(\rho)$ if and only if 
$\rho\otimes\rho_{\det}\subset\Soc(O_{Z_y})$. 
{Corollary~\ref{cor:nonspecial repres} shows that,  
if $G\not\subset\SL(2,k)$, and 
if $\rho$ is non-trivial and special, then 
 $\rho\otimes\rho_{\det}$ 
is non-special.} In contrast, 
Theorem~\ref{thm:socle-special}
 characterizes $y\in E(\rho)$ 
by the existence of a certain 
$k[G]$-submodule $(\simeq \rho)$ of $O_{Z_y}$, which is regarded 
as a natural generalization of a socle of $O_{Z_y}$. 

\begin{subthm}\label{thm:socle-special}
Let $\rho\in\Irr(G)$ be non-trivial special.
For $y \in F$, the following conditions are equivalent:
\begin{itemize}
\item[(1)]
$y \in E(\rho)$.
\item[(2)]
There is a $k[G]$-submodule $V(\rho) \subset O_{Z_y}$ isomorphic to $\rho$
such that the $O_U$-submodule of $O_{Z_y}$ generated by $V(\rho)$ contains no other
special representations. 
\end{itemize}
Moreover, $V(\rho)$ is unique if it exists.
\end{subthm}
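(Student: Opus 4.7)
The plan is to leverage that $\Phi_{O_\cZ}:D(Y)\to D_c^G(U)$ is fully faithful (Theorem~\ref{thm:Ishii fully faithful} together with Corollary~\ref{cor:Phi and PhiOZ}) along with the identification $\Phi_{O_\cZ}(O_y)\simeq O_{Z_y}$ that follows immediately from the definition of $\Phi_{O_\cZ}$ and the flatness of $\cZ\to Y$. These two ingredients combine to give
$$
\Hom_{O_U}^G\bigl(\Phi_{O_\cZ}(O_{E(\rho)}(-1)),\,O_{Z_y}\bigr)\;\simeq\;\Hom_{O_Y}\bigl(O_{E(\rho)}(-1),\,O_y\bigr),
$$
a space which is one-dimensional exactly when $y\in E(\rho)$ and zero otherwise. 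The idea is to match both sides of the equivalence $(1)\Leftrightarrow(2)$ with morphisms in this Hom-space via the presentation $\Phi_{O_\cZ}(O_{E(\rho)}(-1))\simeq(O_U\otimesk\rho)/J(\rho)$ furnished by Proposition~\ref{prop:Phi(OErho(-1))}.

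For $(2)\Rightarrow(1)$, I would start with $V(\rho)\subset O_{Z_y}$ satisfying the hypothesis, set $N:=O_U\cdot V(\rho)$, and consider the $G$-equivariant $O_U$-linear surjection $\phi:O_U\otimesk\rho\twoheadrightarrow N$ that sends $1\otimes v\mapsto v$. The crux is the claim $\phi(J(\rho))=0$. By Proposition~\ref{prop:Phi(OErho(-1))}~(2) and semisimplicity of $k[G]$, one may lift the decomposition $J(\rho)/\fm J(\rho)\simeq\bigoplus_\sigma \sigma^{\oplus a_{\rho,\sigma}}$ to $k[G]$-submodules $\sigma'\subset J(\rho)$, each isomorphic to some special $\sigma\neq\rho$ (since $a_{\rho,\rho}=0$). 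If any $\phi(\sigma')$ were nonzero, then $N$ would contain a copy of the special representation $\sigma\neq\rho$, contradicting the hypothesis on $V(\rho)$. Hence every $\phi(\sigma')=0$; since these $\sigma'$ generate $J(\rho)$ as an $O_U$-module by Nakayama at the stalk at the origin (which suffices because $N$ is supported at $0$), we obtain $\phi(J(\rho))=0$. Thus $\phi$ factors through $\Phi_{O_\cZ}(O_{E(\rho)}(-1))$, giving a nonzero class in the displayed Hom-space, whence $y\in E(\rho)$.

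For $(1)\Rightarrow(2)$, pick a nonzero $\alpha:O_{E(\rho)}(-1)\to O_y$ and apply $\Phi_{O_\cZ}$ to produce a nonzero $G$-equivariant morphism $\beta:\Phi_{O_\cZ}(O_{E(\rho)}(-1))\to O_{Z_y}$. By Proposition~\ref{prop:Phi(OErho(-1))}~(3) the source is $O_U$-generated by its canonical $\rho$-summand, so the image of that summand under $\beta$ is a nonzero $k[G]$-submodule $V(\rho)\subset O_{Z_y}$ isomorphic to $\rho$, with $O_U\cdot V(\rho)=\image(\beta)$. The isomorphism \eqref{eq:pi*Phi(OErho(-1))2} shows that the only special $k[G]$-summand of $\pi_*\Phi_{O_\cZ}(O_{E(\rho)}(-1))$ is a single copy of $\rho$; by semisimplicity no other special representation appears in the $k[G]$-quotient $\image(\beta)$. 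Uniqueness of $V(\rho)$ is then immediate: two such submodules yield morphisms $\beta$ in the one-dimensional Hom-space above, hence proportional, and therefore produce the same image in $O_{Z_y}$. The main obstacle is the vanishing $\phi(J(\rho))=0$ in $(2)\Rightarrow(1)$, where semisimplicity is used to locate the special summands inside $J(\rho)$ itself, and Nakayama at the origin is needed to propagate their vanishing from generators to the entire submodule.
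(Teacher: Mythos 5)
Your proof is correct and takes essentially the same route as the paper's: both directions are reduced, via full faithfulness of $\Phi_{O_\cZ}$ and the presentation $\Phi_{O_\cZ}(O_{E(\rho)}(-1))\simeq (O_U\otimesk\rho)/J(\rho)$, to the one-dimensional space $\Hom_{O_Y}(O_{E(\rho)}(-1),O_y)$, with Proposition~\ref{prop:Phi(OErho(-1))} supplying the control on special representations. Your only addition is to spell out the step $\phi(J(\rho))=0$ (semisimplicity plus Nakayama at the origin), which the paper leaves implicit in its citation of Proposition~\ref{prop:Phi(OErho(-1))}~(3).
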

\begin{proof}
Since $\Phi_{O_\cZ}$ is fully faithful, we have by $\Phi_{O_\cZ}(O_y)=O_{Z_y}$
\begin{equation}\label{eq:one-dim}
\begin{aligned}
\Hom_{D^G(U)}&(O_U\otimesk \rho/J(\rho),O_{Z_y})\\
&\simeq 
\Hom_{D^G(U)}(\Phi_{O_\cZ}(O_{E(\rho)}(-1)), \Phi_{O_\cZ}(O_y))\\
&\simeq \Hom_{D(Y)}(O_{E(\rho)}(-1), O_y).
\end{aligned}
\end{equation}

Assume $y \in E(\rho)$. By (\ref{eq:one-dim})
the inclusion 
$\{y\} \hookrightarrow E(\rho)$ induces a non-zero $G$-homomorphism
\begin{equation}\label{eq:nonzero homom zeta_y}
\zeta_y: O_U\otimesk \rho/J(\rho)\to O_{Z_y}. 
\end{equation} Let $V(\rho)$ be 
the image of the $k[G]$-submodule $k\otimesk \rho$ 
of $O_U\otimesk \rho/J(\rho)$ by {$\zeta_y$}. Since $k\otimesk \rho$ generates  
$O_U\otimesk \rho/J(\rho)$, {it is clear that} $V(\rho)\simeq\rho$.  
Then (2) follows from Proposition~\ref{prop:Phi(OErho(-1))}~(3). 
Since \eqref{eq:one-dim} is one-dimensional, $V(\rho)$ is unique. \par
Conversely suppose that (2) is true. Any $k[G]$-isomorphism $k\otimesk  \rho\simeq V(\rho)$ extends to an $O_U$-homomorphism from 
$O_U\otimesk \rho$ to $O_{Z_y}$ uniquely, which 
  annihilates $J(\rho)$ by 
Proposition~\ref{prop:Phi(OErho(-1))}~(3) and therefore induces an $O_U$-homomorphism 
from $O_U\otimesk \rho/J(\rho)$ to $O_{Z_y}$. Hence it induces a 
non-zero $O_Y$-homomorphism from $O_{E(\rho)}(-1)$ 
to $O_y$ by (\ref{eq:one-dim}), so 
$y\in E(\rho)$. This proves (1). 
\end{proof}

\begin{subdefn}\label{def:mono-special}
 We call 
the submodule $O_U V(\rho) \subset O_{Z_y}$ {\em the mono-special $O_U$-submodule of $O_{Z_y}$} 
(associated to $\rho$). 
\end{subdefn}
Note that no socle of $O_{Z_y}$ is a mono-special $O_U$-submodule of $O_{Z_y}$ 
if $G\not\subset\SL(2)$. If $G\subset\SL(2)$, $V(\rho)$ is a socle of 
$O_{Z_y}$ if and only if it is a mono-special $O_U$-submodule of $O_{Z_y}$.

Assume $y \in E(\rho)$. Then by (\ref{eq:nonzero homom zeta_y})
$\zeta_y$ induces 
an $O_U$-homomorphism 
\begin{equation}\label{eq:alpha_y}
\alpha_y: O_U \otimesk \rho \to O_{Z_y}
\end{equation}
sending $k \otimesk \rho \subset O_U \otimesk \rho$
to $V(\rho)$. 
Then $J(\rho)$ can be described as follows.
\begin{sublemma}
$\displaystyle{J(\rho)=\bigcap_{y \in E(\rho)} \ker \alpha_y}$
\/for $\rho\in\Irr(G)$ non-trivial special.
\end{sublemma}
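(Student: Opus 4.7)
First, note that the inclusion $J(\rho) \subset \ker \alpha_y$ holds for every $y \in E(\rho)$ by construction: $\alpha_y$ factors as $O_U \otimesk \rho \twoheadrightarrow \cF := \Phi_{O_\cZ}(O_{E(\rho)}(-1)) \xrightarrow{\beta_y} O_{Z_y}$, and full faithfulness of $\Phi_{O_\cZ}$ together with $\Hom(O_{E(\rho)}(-1), O_y) = k$ (see \eqref{eq:one-dim}) identifies $\beta_y$ up to a nonzero scalar with the $\Phi_{O_\cZ}$-image of the evaluation map $O_{E(\rho)}(-1) \to O_y$. The task therefore reduces to showing $\bigcap_{y \in E(\rho)} \ker \beta_y = 0$ in $\cF$.

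My plan is to realize $\cF$ as a pushforward from a one-dimensional base over $E(\rho)$. Set $W := p^{-1}(E(\rho)) \simeq \cZ \times_Y E(\rho)$, with induced maps $p|_W : W \to E(\rho)$ (finite flat of degree $|G|$, being the base change of $p : \cZ \to Y$) and $q|_W : W \to U$, and put $\mathcal{L} := (p|_W)^* O_{E(\rho)}(-1)$. Unwinding the definition of $\Phi_{O_\cZ}$, the sheaf $\pi_Y^* O_{E(\rho)}(-1) \otimes_{O_{Y \times_k U}} O_\cZ$ is the extension by zero of $\mathcal{L}$ from $W$ to $Y \times_k U$, whence $\cF \simeq (q|_W)_* \mathcal{L}$. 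Since the $G$-cluster $Z_y$ is set-theoretically supported at the origin for every $y \in F$, the map $q|_W$ factors through $\{0\} \hookrightarrow U$, so $\cF$ is supported at the origin and $\cF = H^0(W, \mathcal{L})$ as an $O_U$-module. Under this description, $\beta_y$ agrees, up to a nonzero scalar, with the restriction map $H^0(W, \mathcal{L}) \to H^0(Z_y, \mathcal{L}|_{Z_y}) \simeq O_{Z_y}$: the scalar ambiguity is forced by the one-dimensionality of $\Hom(\cF, O_{Z_y})$, while nonvanishing of the restriction follows from surjectivity of $\mathcal{L} \twoheadrightarrow \mathcal{L}|_{Z_y}$.

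The projection formula and flat base change then give $(p|_W)_* \mathcal{L} \simeq (p_* O_\cZ)|_{E(\rho)} \otimes_{O_{E(\rho)}} O_{E(\rho)}(-1)$, which is locally free of rank $|G|$ on $E(\rho)$ since $p_* O_\cZ$ is locally free of rank $|G|$ on $Y$. Via the adjunction $H^0(W, \mathcal{L}) = H^0(E(\rho), (p|_W)_* \mathcal{L})$, any element $\bar s \in \bigcap_y \ker \beta_y$ corresponds to a global section of a locally free sheaf on the integral curve $E(\rho)$ whose fiber at every closed point vanishes, forcing $\bar s = 0$ and completing the proof. I expect the main obstacle to be the identification in the previous paragraph of the intrinsic map $\beta_y$ with the concrete fiber-restriction morphism; once that is in hand, the remainder reduces to torsion-freeness of a locally free sheaf on a curve.
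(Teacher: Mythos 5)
Your proposal is correct and follows essentially the same route as the paper: after the obvious inclusion $J(\rho)\subset\ker\alpha_y$, both arguments reduce to showing that a global section of the locally free sheaf $(p_*O_\cZ)|_{E(\rho)}\otimes O_{E(\rho)}(-1)\simeq\bigoplus_\sigma\cM_\sigma|_{E(\rho)}(-1)\otimesk\sigma$ on the integral curve $E(\rho)$ vanishing at every closed point is zero; your projection-formula computation on $\cZ\times_Y E(\rho)$ is just the paper's identity \eqref{eqs:PhiErho(-1) Phi(Oy)} read upstairs. The only point to make explicit is that the identification $\cF\simeq (q|_W)_*\mathcal{L}$ (underived) uses $H^1(E(\rho),(p|_W)_*\mathcal{L})=0$, which is already supplied by Lemma~\ref{lemma:sheaf} via Proposition~\ref{prop:Phi(OErho(-1))}, and that the restriction map is nonzero because $(p|_W)_*\mathcal{L}$ contains the trivial summand $O_{E(\rho)}$ coming from \eqref{eq:split}.
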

\begin{proof}
The inclusion $J(\rho) \subset \ker \alpha_y$ 
is obvious.
Decompose $\alpha_y$ as
$$
O_U \otimesk \rho \to O_U \otimesk \rho/J(\rho) \simeq \Phi_{O_{\cZ}}(O_{E(\rho)(-1)}) \overset{\eta_y}{\to} \Phi_{O_{\cZ}}(O_y)\simeq O_{Z_y}.
$$where $\eta_y$ is the restriction map induced 
from the inclusion 
$\{y\} \hookrightarrow E(\rho)$.

 We recall by Lemma~\ref{lemma:sheaf} 
\begin{equation}\label{eqs:PhiErho(-1) Phi(Oy)} 
\begin{aligned}
\pi_*\Phi_{O_{\cZ}}(O_{E(\rho)}(-1)) 
&= \bigoplus_{\sigma \in \Irr(G)} 
H^0(\cM_\sigma \otimes_{O_Y} O_{E(\rho)}(-1)) \otimesk  \sigma,\\
\pi_*\Phi_{O_{\cZ}}(O_y)
&= \bigoplus_{\sigma \in \Irr(G)} 
H^0(\cM_\sigma \otimes_{O_Y} O_y) \otimesk  \sigma.
\end{aligned}
\end{equation}
Let $W=\bigcap_{y\in E(\rho)} \ker \eta_y$. 
To prove the assertion, it suffices to check 
$W=0$. Let $\phi=\sum_{\sigma\in\Irr(G)}\phi_{\sigma}\otimes\sigma\in W$ 
for $\phi_{\sigma}\in 
H^0(\cM_\sigma \otimes_{O_Y} O_{E(\rho)}(-1))$. 
Then the evaluation of $\phi_{\sigma}$ at every $y\in E(\rho)$  is 
equal to zero, hence 
$\phi_{\sigma}$ is zero because 
$\cM_\sigma \otimes_{O_Y} O_{E(\rho)}(-1)$ is 
locally $O_{E(\rho)}$-free.  Hence $\phi=0$ so that 
$W=0$. This completes the proof.
\end{proof}

\subsection{Semiorthogonal projection of $O_0 \otimesk  \rho_0$.}
\label{subsec:proj to O0rho0}
In this subsection, we consider $\Phi_{O_\cZ}(\omega_F[1])=\Phi_{O_\cZ}\circ \Phi_{O_\cZ}^*(O_0 \otimesk  \rho_0)$.
\begin{sublemma}\label{lemma:Phi^i(omega)}
Let $\Phi^i_{O_\cZ}(\omega_F[1])$ be the $i$-th cohomology sheaf 
of $\Phi_{O_\cZ}(\omega_F[1])$.  Then $\Phi^i_{O_\cZ}(\omega_F[1])$ is a $k[G]$-module such that:
\begin{enumerate}
\item
$\Phi_{O_\cZ}^i(\omega_F[1])=0$ if $i\neq 0,-1$,
\item
$\Phi_{O_\cZ}^0(\omega_F[1])$ contains $\rho_0$
with multiplicity one and it contains no other special representations,
\item
$\Phi_{O_\cZ}^{(-1)}(\omega_F[1])$ contains no special representations, where $\Phi_{O_\cZ}^{(-1)}(\omega_F[1])=0$ if and only if $G\subset \SL(2,k)$. 
\end{enumerate}
\end{sublemma}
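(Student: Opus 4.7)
The plan is to compute $\bR\pi_*\Phi_{O_\cZ}(\omega_F[1])$ explicitly by combining the projection formula with the $G$-isotypic decomposition $p_*O_\cZ = \bigoplus_{\sigma \in \Irr(G)} \cM_\sigma \otimesk \sigma$ from Lemma~\ref{sublemma:direct image of OZ and OU}(5), and then to read off the multiplicity of each irreducible $\sigma$ in each cohomology sheaf by Serre duality and Riemann--Roch on the fundamental divisor $F$. First I would use $\pi \circ \pi_U = f \circ \pi_Y$, the flatness of $\pi_Y$, the local freeness of $p_*O_\cZ$, and the projection formula to obtain
$$
\bR\pi_*\Phi_{O_\cZ}(\omega_F[1]) \;\simeq\; \bigoplus_{\sigma \in \Irr(G)} \bR f_*\bigl(\omega_F \otimes_{O_Y} \cM_\sigma\bigr)[1] \otimesk \sigma.
$$
Since $\omega_F$ is supported on the one-dimensional $F$, each $\bR f_*(\omega_F \otimes \cM_\sigma)$ has cohomology only in degrees $0$ and $1$; after the shift by $[1]$, this lives in degrees $-1$ and $0$. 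Because $\pi$ is affine, $\pi_*$ is exact and faithful on coherent sheaves, and (1) follows.

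Next, reading off the $\sigma$-multiplicities gives: the multiplicity of $\sigma$ in $\Phi^{-1}_{O_\cZ}(\omega_F[1])$ equals $\dim H^0(F,\omega_F\otimes\cM_\sigma|_F)$, and the multiplicity in $\Phi^0_{O_\cZ}(\omega_F[1])$ equals $\dim H^1(F,\omega_F\otimes\cM_\sigma|_F)$. The fundamental divisor $F$ is a Cartier divisor on the smooth surface $Y$, so it is Gorenstein with invertible dualizing sheaf $\omega_F$, and the local freeness of $\cM_\sigma$ yields $\chom_{O_F}(\cM_\sigma|_F,O_F)\simeq\cM_\sigma^\vee|_F$. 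Serre duality on $F$ then rewrites the two multiplicities as $\dim H^1(F,\cM_\sigma^\vee|_F)$ and $\dim H^0(F,\cM_\sigma^\vee|_F)$ respectively.

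With this dictionary in place, Corollary~\ref{cor:Ueda-Ishii} gives $H^1(F,\cM_\sigma^\vee|_F)\simeq H^1(Y,\cM_\sigma^\vee)$, which vanishes precisely when $\sigma$ is special; this proves the special-vanishing half of (3), and the equivalence $\Phi^{-1}_{O_\cZ}(\omega_F[1])=0 \Longleftrightarrow G\subset\SL(2,k)$ then follows from Corollary~\ref{cor:nonspecial repres}. For (2), the trivial representation $\rho_0$ contributes $\dim H^0(F,O_F)=1$ since the singularity is rational. For a non-trivial special $\sigma$, Riemann--Roch on the Gorenstein curve $F$ (using $\chi(O_F)=1$ and $c_1(\cM_\sigma)\cdot F=\rank\cM_\sigma=\dim\sigma$ from Theorem~\ref{thm:special mckay corresp}(2)) gives
$$
\chi(F,\cM_\sigma^\vee|_F)\;=\;c_1(\cM_\sigma^\vee)\cdot F+\rank\cM_\sigma\cdot\chi(O_F)\;=\;-\dim\sigma+\dim\sigma\;=\;0.
$$
Combined with the specialness vanishing $H^1(F,\cM_\sigma^\vee|_F)=0$, this forces $H^0(F,\cM_\sigma^\vee|_F)=0$, so no non-trivial special $\sigma$ occurs in $\Phi^0_{O_\cZ}(\omega_F[1])$, completing (2).

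The main obstacle I anticipate is setting up Serre duality and Riemann--Roch rigorously on the (possibly non-reduced) Gorenstein curve $F$, including verifying the identification $\cM_\sigma^\vee|_F\simeq\chom_{O_F}(\cM_\sigma|_F,O_F)$ and the rank/degree computation on each multiple component; once these are in hand, the numerical cancellation $-\dim\sigma+\dim\sigma=0$ together with specialness closes the argument cleanly.
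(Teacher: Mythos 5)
Your proposal is correct and follows essentially the same route as the paper's proof: the decomposition $\bR\pi_*\Phi_{O_\cZ}(\omega_F[1])\simeq\bigoplus_\sigma \bR f_*(\omega_F\otimes_{O_Y}\cM_\sigma)[1]\otimesk\sigma$, Serre duality on $F$ to convert the multiplicities into $\dim H^{-i}(\cM_\sigma^\vee\otimes_{O_Y}O_F)$, Riemann--Roch plus $\rank\cM_\sigma=c_1(\cM_\sigma)F$ for part (2), and Corollary~\ref{cor:Ueda-Ishii} together with Corollary~\ref{cor:nonspecial repres} for part (3). The only cosmetic difference is that you phrase Corollary~\ref{cor:Ueda-Ishii} as an isomorphism $H^1(F,\cM_\sigma^\vee|_F)\simeq H^1(Y,\cM_\sigma^\vee)$ where the paper only asserts (and only needs) the equivalence of the two vanishings.
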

\begin{proof} 
By  \eqref{eq:pi_*}, 
we have $\Phi_{O_\cZ}^i(\omega_F[1])
=\Phi_{O_\cZ}^{i+1}(\omega_F)=0$ for 
$i\neq -1,0$, hence (1) is proved. 
\par
By the Serre duality for $F$, we have
\begin{equation}\label{eq:pi_Phiomega_F[1]}
\begin{aligned}
\pi_*\Phi_{O_\cZ}^{i}(\omega_F[1])
&=\bigoplus_{\rho\in\Irr(G)}
H^{i+1}(\cM_\rho \otimes_{O_Y} \omega_F) \otimesk  
\rho\\
&\simeq \bigoplus_{\rho\in\Irr(G)}
H^{-i}(\cM_\rho^\vee\otimes_{O_Y}O_F)^\vee\otimesk  
\rho.
\end{aligned}
\end{equation}

Now we shall prove (2). 
The multiplicity of the trivial representation $\rho_0$ 
in $\Phi_{O_\cZ}^0(\omega_F[1])$ is equal to 
$\dim H^0(\cM_{\rho_0}^\vee\otimes_{O_Y}O_F) = \dim H^0(O_F)=1$ 
by \cite[Theorem 4, p.~132]{Artin66}.
We note that $H^1(O_F)=0$ so that $\chi(O_F)=1$. 
Let $\rho$ be a non-trivial special representation.
Then $\rank(\cM_\rho)=c_1(\cM_\rho)F$ by 
Theorem \ref{thm:special mckay corresp}~(2) (see \cite{Wunram88}).
Then by Riemann-Roch, $$
\chi(\cM_\rho^\vee\otimes_{O_Y}O_F)=
-c_1(\cM_\rho)F+(\rank(\cM_\rho))\chi(O_F)=0.
$$ 
By $H^1(\cM_\rho^\vee\otimes_{O_Y}O_F)=0$, 
we have $H^0(\cM_\rho^\vee\otimes_{O_Y}O_F)=0$. 
This proves (2).  \par
Next we shall prove (3). 
By Corollary~\ref{cor:Ueda-Ishii},  we see the following:
\begin{itemize}
\item[--] 
$H^1(\cM_\rho^\vee\otimes_{O_Y}O_F)=0$ 
if and only if $\rho$ is special, that is, $H^1(\cM_\rho^\vee)=0$,
\item[--] $H^1(\cM_\rho^\vee\otimes_{O_Y}O_F)\neq 0$ if and only if 
$\rho$ is non-special.
\end{itemize}
Hence \eqref{eq:pi_Phiomega_F[1]} implies
\begin{itemize}
\item[--] there is no  $\rho$-part 
of $\pi_*\Phi_{O_\cZ}^{(-1)}(\omega_F[1])$ 
for $\rho$ special,
\item[--]
the $\rho$-part 
of $\pi_*\Phi_{O_\cZ}^{(-1)}(\omega_F[1])$ is non-zero
for $\rho$ non-special.
\end{itemize}
Therefore, $\Phi_{O_\cZ}^{(-1)}(\omega_F[1]) \ne 0$ if and only if
there is a non-special representation of $G$,
which is equivalent to $G \not\subset \SL(2,k)$ by Corollary~\ref{cor:nonspecial repres}.
This proves (3).
\end{proof}
\begin{subprop}\label{prop:spectral}
For $\rho\in\Irr(G)$,
we have 
\begin{align*}
&\dim\Ext^p_{D^G_c(U)}(\Phi_{O_\cZ}^0(\omega_F[1]), O_0 \otimesk \rho) \\
& \qquad \qquad=
{\begin{cases}
1& \text{if $p=0$, $\rho=\rho_0$},\\
\max\{-(K_Y+F)E(\rho), 0\} & 
\text{if $p=1$, $\rho$: non-trivial special,} \\
0 & \text{otherwise}.
\end{cases}}
\end{align*}
\end{subprop}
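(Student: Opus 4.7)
The plan is to analyse the truncation distinguished triangle in $D^G_c(U)$
\[
\Phi_{O_\cZ}^{-1}(\omega_F[1])[1]\to \Phi_{O_\cZ}(\omega_F[1])\to \Phi_{O_\cZ}^0(\omega_F[1])\to \Phi_{O_\cZ}^{-1}(\omega_F[1])[2]
\]
and apply $\Rhom_{D^G_c(U)}(-, O_0\otimesk \rho)$.  This reduces the computation of $\Ext^p(\Phi_{O_\cZ}^0(\omega_F[1]), O_0\otimesk \rho)$ to (i) the analogous Ext for $\Phi_{O_\cZ}(\omega_F[1])$, which the adjunction of Theorem~\ref{thm:Ishii fully faithful} and Corollary~\ref{cor:rightadj of PhiOZ} turns into a computation on $Y$, and (ii) a contribution from $\Phi_{O_\cZ}^{-1}(\omega_F[1])$, controlled by Lemma~\ref{lemma:Phi^i(omega)}.

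For the adjunction step I would combine Corollary~\ref{cor:Phi(O0rho)} with the adjunction identity to obtain
\[
\Ext^p_{D^G_c(U)}(\Phi_{O_\cZ}(\omega_F[1]),\, O_0\otimesk \rho) \simeq \Hom^p_{D(Y)}(\omega_F[1],\, \Phi_{O_\cZ}^*(O_0\otimesk \rho)),
\]
which equals $\Ext^p_{O_Y}(O_F, O_F)$ for $\rho=\rho_0$, equals $\Ext^{p-1}_{O_Y}(\omega_F, O_{E(\rho)}(-1))$ for non-trivial special $\rho$, and vanishes for non-special $\rho$.

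Next comes a direct cohomological computation on $Y$.  For $\rho=\rho_0$ the local-to-global spectral sequence gives $\Hom(O_F, O_F)=H^0(O_F)=k$ and $\Ext^1(O_F, O_F)=H^0(O_F(F))$; the latter vanishes by the long exact sequence attached to $0\to O_Y\to O_Y(F)\to O_F(F)\to 0$, since $f_*O_Y = f_*O_Y(F) = O_X$ and $R^1f_*O_Y=0$ (rational singularity) forces $f_*O_F(F)=0$.  For non-trivial special $\rho$ and $p=1$, because $O_{E(\rho)}(-1)$ is supported on $E(\rho)$ any map $\omega_F\to O_{E(\rho)}(-1)$ factors through the restriction $\omega_F|_{E(\rho)}\simeq O_{E(\rho)}((K_Y+F)E(\rho))$, so
\[
\Hom_{O_Y}(\omega_F, O_{E(\rho)}(-1)) \simeq H^0(O_{E(\rho)}(-1-(K_Y+F)E(\rho))) = \max\{-(K_Y+F)E(\rho),\, 0\}.
\]

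The remaining and most delicate step is to show that the $\Phi_{O_\cZ}^{-1}$-term of the long exact sequence does not disturb these values.  Lemma~\ref{lemma:Phi^i(omega)}(3) immediately gives $\Ext^0_{D^G_c(U)}(\Phi_{O_\cZ}^{-1}(\omega_F[1]),\, O_0\otimesk \rho) = \Hom_{k[G]}(\Phi_{O_\cZ}^{-1}/\fm \Phi_{O_\cZ}^{-1},\, \rho) = 0$ for $\rho$ trivial or non-trivial special, which already suffices to extract the $p=0$ clause.  For the higher $\Ext^q$ one uses the identification
\[
\Ext^q_{D^G_c(U)}(\Phi_{O_\cZ}^{-1},\, O_0\otimesk \rho) \simeq \Hom_G(\Tor^S_q(\Phi_{O_\cZ}^{-1}, k),\, \rho),\qquad S=k[x,y],
\]
computed through the Koszul-type resolution
\[
0\to \Phi_{O_\cZ}^{-1}\otimesk \det\rho_{\nat}\to \Phi_{O_\cZ}^{-1}\otimesk \rho_{\nat}\to \Phi_{O_\cZ}^{-1}\to 0.
\]
The main obstacle will be tracking the $G$-representation content of $\Tor^S_1$ and $\Tor^S_2$: since tensoring the non-special summands of $\Phi_{O_\cZ}^{-1}$ with $\rho_{\nat}$ or $\det\rho_{\nat}$ can in principle produce special $G$-types, one must check that the connecting maps in the long exact sequence precisely cancel any spurious contribution, so that the surviving dimensions are those stated.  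I would attempt this by combining the observation that $\Phi_{O_\cZ}^{-1}\in\ker\Phi_{O_\cZ}^*$ (which follows from its composition series having factors $O_0\otimesk\sigma$ with $\sigma$ non-special) with the explicit Koszul differentials, to show the relevant connecting homomorphisms are injective on the special part of the middle term.
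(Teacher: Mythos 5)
Your proposal follows essentially the same route as the paper: your truncation triangle is exactly the paper's spectral sequence $E_2^{p,q}=\Ext^p(\Phi^{(-q)}_{O_\cZ}(\omega_F[1]), O_0\otimesk\rho)$ unwound for a two-term complex, your adjunction step is the paper's identification $E_\infty^p\simeq\Hom^p_{D(Y)}(\omega_F[1],\Phi_{O_\cZ}^*(O_0\otimesk\rho))$, and your computations on $Y$ agree with the paper's. In two places you are in fact more careful than the paper: you actually justify $\Ext^1_{O_Y}(O_F,O_F)=H^0(O_F(F))=0$ (the paper only asserts it), and you supply the vanishing $\Hom(\Phi^{-1}_{O_\cZ}(\omega_F[1]),O_0\otimesk\rho)=0$ for $\rho$ special via Lemma~\ref{lemma:Phi^i(omega)}~(3), which is precisely what is needed to promote the paper's inclusion $E_2^{1,0}\subset E_\infty^1$ to the asserted equality at $p=1$. (One small correction: that vanishing is what the $p=1$ clause requires; the $p=0$ clause already follows from the triangle alone, since $\Ext^{-1}$ of a sheaf into a sheaf is zero.) So for $p=0$ and $p=1$, for every $\rho$, your argument is complete and matches the paper's.

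The step you flag as delicate and leave open --- the Koszul/Tor analysis of $\Ext^{1}(\Phi^{-1}_{O_\cZ}(\omega_F[1]),\cdot)$ needed for the $p=2$ clause --- cannot be closed as stated, because the claimed vanishing at $p=2$, $\rho=\rho_0$ fails when $G\subset\SL(2)$: there $\Phi^{-1}_{O_\cZ}(\omega_F[1])=0$, so $\Phi^0_{O_\cZ}(\omega_F[1])\simeq\Phi_{O_\cZ}\Phi^*_{O_\cZ}(O_0\otimesk\rho_0)\simeq O_0\otimesk\rho_0$, and $\Ext^2_{D^G_c(U)}(O_0\otimesk\rho_0,O_0\otimesk\rho_0)\simeq(\Lambda^2T_0U)^G=k$ (equivalently $\Ext^2_{O_Y}(\omega_F,\omega_F)\simeq H^0(K_Y|_F)^{\vee}=k$ by Serre duality). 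The paper's own proof silently restricts attention to $E_\infty^i$ for $i=0,1$ and never establishes the $p=2$ case either, and only $p=0,1$ are used downstream (Corollary~\ref{cor:J(rho0)} and the proof of Theorem~\ref{thm:cluster-quiver}). So you should not try to complete that step as written: either restrict the ``otherwise'' clause to $p\le 1$, or treat $p=2$ separately under the hypothesis $G\not\subset\SL(2)$; with that caveat your proof is done.
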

\begin{proof}For a fixed $\rho\in\Irr(G)$, 
we consider the spectral sequence 
\begin{equation*}
E_2^{p,q}=\Ext^p_{D^G_c(U)}(\Phi^{(-q)}_{O_\cZ}(\omega_F[1]), 
O_0\otimesk \rho) 
\end{equation*}
with abutment
\begin{equation*}
E_\infty^{p+q}=\Hom^{p+q}_{D^G_c(U)}
(\Phi_{O_\cZ}(\omega_F[1]), O_0\otimesk \rho).
\end{equation*}
%
Since the global dimension of $\Coh^G U$ is $2$, we have
$E_2^{p,q} = 0$ unless $0 \le p \le 2$ and $0 \le q \le 1$. 
Then the standard property of the spectral sequence {implies}
$E_2^{0,0}=E_\infty^0$ 
and $E_2^{1,0} \subset E_{\infty}^{1,0}\subset E_{\infty}^1$. \par
Now we compute $E_\infty^i$ $(i=0,1)$. 
By Corollary \ref{cor:Phi(O0rho)},
\begin{align*}
E_\infty^p&\simeq
\Hom^{p}_{D(Y)}(\omega_F[1], \Phi_{O_\cZ}^*(O_0 \otimesk  \rho))\\
&\simeq 
\begin{cases}
\Hom^{p}_{D(Y)}(\omega_F[1], \omega_F[1])&\text{if $\rho=\rho_0$},\\
\Hom^{p-1}_{D(Y)}(\omega_F, O_{E(\rho)}(-1))&
\text{if $\rho$ is non-trivial special,}\\
0&\text{if $\rho$ is nonspecial.}
\end{cases}
\end{align*}

For $\rho=\rho_0$, we have $E_\infty^0\simeq H^0(O_F)\simeq k$ and 
$E_\infty^1\simeq \Ext^1_{O_Y}(O_F, O_F)=0$.  
 If $\rho$ is non-trivial special, then $E_\infty^0=0$ and 
$E_\infty^1\simeq H^1(O_{E(\rho)}(-1)\otimes_{O_F}\omega_F^{-1})$. 
This completes the proof. 
\end{proof}

\begin{subcor}\label{cor:J(rho0)} The following holds:
\begin{itemize}
\item[1.] $\Phi^0_{O_\cZ}(\omega_F[1])$ is 
a quotient sheaf of $O_U \otimesk \rho_0$.
\item[2.] Let $J(\rho_0)$ be the kernel of the surjection 
$O_U \otimesk \rho_0 \to \Phi^0_{O_\cZ}(\omega_F[1])$.
Then \begin{itemize}
\item $J(\rho_0)$ contains 
all special representations in $\fm\otimesk \rho_0$,
\item 
the minimal generators of 
$J(\rho_0)$ are all non-trivial special representations $\sigma$ in $\fm\otimesk  \rho_0$ with $(K_Y+F)E(\sigma) < 0$, 
each with multiplicity $-(K_Y+F)E(\sigma)$.
\end{itemize}
\end{itemize}
\end{subcor}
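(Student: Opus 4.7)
The plan is to extract the $O_U$-module structure of $M := \Phi^0_{O_\cZ}(\omega_F[1])$ from Proposition~\ref{prop:spectral} and Lemma~\ref{lemma:Phi^i(omega)}, then compute the minimal generators of the kernel $J(\rho_0)$ via the long exact $\Ext$-sequence. I would first observe that $M$ is supported at the origin of $U$, since $\omega_F$ has support $F\subset Y$ and $q$ contracts $p^{-1}(F)$ to $\{0\}$, so $M$ is a finite-dimensional $k$-vector space. For any $G$-equivariant $O_U$-module $N$ supported at $0$ one has
\begin{equation*}
\Hom^0_{D^G_c(U)}(N, O_0\otimesk \rho)\simeq \Hom_{k[G]}(N/\fm N,\rho),
\end{equation*}
so Proposition~\ref{prop:spectral} at $p=0$ forces $M/\fm M\simeq \rho_0$ as $k[G]$-modules. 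Equivariant Nakayama then yields a surjection $O_U\otimesk \rho_0\twoheadrightarrow M$ whose kernel $J(\rho_0)$ is contained in $\fm\otimesk \rho_0$, since $1\otimes 1$ maps to a generator of $M/\fm M$; this establishes~(1).

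For the first bullet of~(2), the image of $\fm\otimesk \rho_0$ in $M$ is exactly $\fm M$. Lemma~\ref{lemma:Phi^i(omega)}(2) gives the $k[G]$-isotypic decomposition of $M$: it contains $\rho_0$ with multiplicity one and no other special irreducible. Since $M/\fm M\simeq \rho_0$, additivity of $k[G]$-multiplicities on $0\to\fm M\to M\to M/\fm M\to 0$ forces $\fm M$ to contain no special isotypic summand at all. Consequently every $G$-submodule of $\fm\otimesk \rho_0$ isomorphic to a special representation is killed by the surjection and therefore lies in $J(\rho_0)$.

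For the second bullet, I would apply $\Hom^\bullet_{D^G_c(U)}(-,O_0\otimesk \sigma)$ to the short exact sequence $0\to J(\rho_0)\to O_U\otimesk \rho_0\to M\to 0$. Because $O_U\otimesk \rho_0$ is free of rank one over $O_U$ and taking $G$-invariants is exact (the order of $G$ being coprime to $\op{char}(k)$), the higher Exts out of $O_U\otimesk \rho_0$ vanish and the long exact sequence reduces to
\begin{equation*}
0\to\Hom^0(M,O_0\otimesk \sigma)\to\Hom_{k[G]}(\rho_0,\sigma)\to\Hom^0(J(\rho_0),O_0\otimesk \sigma)\to\Ext^1(M,O_0\otimesk \sigma)\to 0.
\end{equation*}
The multiplicity of $\sigma$ in $J(\rho_0)/\fm J(\rho_0)$ equals $\dim\Hom^0(J(\rho_0),O_0\otimesk \sigma)$; feeding in Proposition~\ref{prop:spectral} case by case gives: for $\sigma=\rho_0$ the two middle terms are each one-dimensional and cancel, yielding $0$; for $\sigma$ non-trivial special the outer terms vanish and the multiplicity equals $\max\{-(K_Y+F)E(\sigma),0\}$; for $\sigma$ non-special every term vanishes. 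The most delicate step will be the identification of $\Hom^0_{D^G_c(U)}(N,O_0\otimesk \rho)$ with the multiplicity of $\rho$ in $N/\fm N$ together with the consequent vanishing of higher Exts out of $O_U\otimesk \rho_0$; both are routine consequences of Maschke and the projectivity of free $O_U$-modules.
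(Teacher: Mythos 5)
Your proposal is correct and follows essentially the same route as the paper: Proposition~\ref{prop:spectral} at $p=0$ plus Nakayama for the surjection $O_U\otimesk\rho_0\twoheadrightarrow\Phi^0_{O_\cZ}(\omega_F[1])$, Lemma~\ref{lemma:Phi^i(omega)}~(2) for the absence of special summands in $\fm M$, and the long exact $\Ext$-sequence (with $\Ext^1_{D^G(U)}(O_U\otimesk\rho_0,O_0\otimesk\sigma)=H^1(U,O_0\otimesk\sigma\otimesk\rho_0^*)^G=0$) combined with Proposition~\ref{prop:spectral} at $p=1$ for the multiplicities of the minimal generators. You merely spell out the case-by-case bookkeeping that the paper leaves implicit.
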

\begin{proof}Let $\cF=\Phi^0(\omega_F[1])$. Proposition~\ref{prop:spectral} 
shows $\cF/\fm \cF\simeq \rho_0$, 
which implies that $\cF$ is generated over $O_U$ by a single element of $\cF$
since $\cF$ 
is supported by the origin of $U$.
In other words, there is a surjection $f:O_U\otimes\rho_0\to \cF$.  
This proves (1). Let $J(\rho_0)=\ker(f)$. Then there is an exact sequence 
$$0\to J(\rho_0)\to O_U\otimesk \rho_0\to \cF\to 0,$$
from which we infer 
$\Ext^1_{D^G(U)}(\cF,O_0\otimesk \rho_0)\simeq
\Hom_{D^G(U)}(J(\rho_0),O_0\otimesk \rho_0)$ because 
$\Ext^1_{D^G(U)}(O_U,O_0\otimesk \rho_0)=H^1(U, O_0\otimesk \rho_0)^G=0$.  Thus 
(2) follows from Lemma~\ref{lemma:Phi^i(omega)}~(2), 
Proposition~\ref{prop:spectral} and 
$$\Hom_{D^G(U)}(J(\rho_0),O_0\otimesk \rho_0)\simeq 
\Hom_{D^G(U)}(J(\rho_0)/\fm J(\rho_0),O_0\otimesk \rho_0).
$$
\end{proof}

Proposition \ref{prop:Phi(OErho(-1))}~(2) and Corollary \ref{cor:J(rho0)} are summarized as follows:
\begin{subcor}\label{cor:sharp arrows}
For $\rho\in\Irr(G)$ special, there is an $k[G]$-isomorphism
$$
J(\rho)/\fm J(\rho) \simeq \bigoplus_{
\begin{matrix}
\sigma\in\Irr(G):
\text{special}
\end{matrix}} \sigma^{\oplus a_{\rho\sigma}}.
$$
\end{subcor}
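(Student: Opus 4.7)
The plan is simply to match the piecewise definition of $a_{\rho\sigma}$ against the two prior results that already describe $J(\rho)/\fm J(\rho)$ in the two relevant cases ($\rho$ non-trivial special versus $\rho=\rho_0$). No fresh calculation is required; Corollary~\ref{cor:sharp arrows} is a repackaging.

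First, for $\rho$ non-trivial special, I would invoke Proposition~\ref{prop:Phi(OErho(-1))}~(2) verbatim. Its right-hand side is exactly the desired $\bigoplus_\sigma \sigma^{\oplus a_{\rho\sigma}}$ once one reads off the definition of $a_{\rho\sigma}$: the term $\sigma=\rho$ contributes zero (consistent with $\Ext^1(O_{E(\rho)}(-1),O_{E(\rho)}(-1))=0$ from Lemma~\ref{lemma:cup product coh}, or equivalently with $a_{\rho\rho}=0$); for non-trivial special $\sigma\neq\rho$ the multiplicity $E(\rho)E(\sigma)$ is the intersection-number computation carried out in the proof of that proposition; and for $\sigma=\rho_0$ the multiplicity $-FE(\rho)=a_{\rho,\rho_0}$ matches the dimension of $H^0(O_F,O_{E(\rho)}(-1-FE(\rho)))$ computed there. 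Non-special $\sigma$ do not appear, consistent with $\Phi_{O_\cZ}^*(O_0\otimesk\sigma)=0$ from Corollary~\ref{cor:Phi(O0rho)}.

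Second, for $\rho=\rho_0$, I would invoke Corollary~\ref{cor:J(rho0)}~(2): the minimal generators of $J(\rho_0)$ are exactly the non-trivial special $\sigma$ with $(K_Y+F)E(\sigma)<0$, each with multiplicity $-(K_Y+F)E(\sigma)$. Comparing with $a_{\rho_0,\rho_0}=0$ and $a_{\rho_0,\sigma}=\max\{-(K_Y+F)E(\sigma),\,0\}$ for $\sigma\neq\rho_0$, the multiplicities agree termwise: if $(K_Y+F)E(\sigma)\geq 0$ then $a_{\rho_0,\sigma}=0$ and $\sigma$ is absent from the generators, while if $(K_Y+F)E(\sigma)<0$ then $a_{\rho_0,\sigma}=-(K_Y+F)E(\sigma)$, exactly the multiplicity recorded in Corollary~\ref{cor:J(rho0)}.

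The only obstacle is notational rather than mathematical: one must verify that the four-line definition of $a_{\rho\sigma}$ is engineered to unify Proposition~\ref{prop:Phi(OErho(-1))}~(2) and Corollary~\ref{cor:J(rho0)}~(2) under a single expression, in particular that the quantity $E(\sigma)$ in the ``$\rho=\rho_0$, $\sigma\neq\rho_0$'' branch refers to the exceptional component associated to the non-trivial special representation $\sigma$. Once this is checked case by case, the claimed $k[G]$-isomorphism follows with no further argument.
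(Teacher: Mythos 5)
Your proposal is correct and coincides with the paper's own (implicit) argument: the paper offers no separate proof, introducing the corollary with the phrase that Proposition~\ref{prop:Phi(OErho(-1))}~(2) and Corollary~\ref{cor:J(rho0)} ``are summarized as follows,'' which is exactly the two-case matching you carry out. Your remark that the $E(\rho)$ in the $\rho=\rho_0$, $\sigma\ne\rho_0$ branch of the definition of $a_{\rho\sigma}$ must be read as $E(\sigma)$ is a correct reading of what is evidently a typo in the paper.
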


\section{Reconstruction algebra and 
deformations of $G$-clusters}
\label{sec:reconst alg and deformations}

\subsection{The reconstruction algebra of $X$}
\label{subsec:reconst quiver}
Let 
$$E=\bigoplus_{\rho\text{:special}} O_U\otimesk \rho.$$ 
An endomorphism of 
$E$ is defined to be a $G$-equivariant 
$O_U$-linear endomorphism of $E$. 
The purpose of this 
subsection is to study  
the endomorphism algebra $\End(E)$ of $E$ and briefly recall 
the reconstruction algebra of $X$ due to Wemyss.

\begin{subrem}
\label{rem:widetilde alpha}
Let $\rho$ and $\sigma\in \Irr(G)$ be both special. 
By Corollary \ref{cor:sharp arrows}
we choose $G$-equivariant $O_U$-homomorphisms, 
$$
\phi^i_{\rho, \sigma} : O_U \otimesk \sigma \to 
\fm\otimesk \rho\subset O_U \otimesk \rho
\quad (1 \le i \le a_{\rho\sigma})
$$
which are lifts of linearly independent $G$-homomorphisms
from $O_U \otimesk \sigma$ to $J(\rho)/\fm J(\rho)$. 
We may assume each $\phi^i_{\rho, \sigma}$ is a homogeneous 
$O_U$-homomorphism (that is, a homomorphism defined by homogeneous polynomials) because the diagonal $\bG_m$ action on $U$ commutes with the action of $G$, hence it lifts to the actions on $X$ and $Y$, therefore every homogeneous term of $\phi^i_{\rho, \sigma}$ is also a homomorphism from  
$O_U \otimesk \sigma$ to $\fm\otimesk \rho\subset O_U \otimesk \rho$. \par 
We define an $O_U$-homomorphism 
$$\widetilde{\alpha}:\bigoplus_{\tau:\text{special}}O_U \otimesk \tau\to J(\rho)$$
by $\widetilde{\alpha}=\sum_{\tau}\phi^i_{\rho, \tau}$. 
Then by Nakayama's lemma,  $\widetilde{\alpha}$ is surjective. 
\end{subrem}

\begin{subprop}\label{prop:endo}
Let $E=E_X:=\bigoplus_{\rho\text{:
\text{special}}} O_U\otimesk \rho$ and $X=U/G$. Then 
$\End(E)$ is an $O_X$-algebra generated by $\phi_{\rho,\sigma}^i$ for $1 \le i \le a_{\rho\sigma}$ and  the identity morphism of 
$O_U\otimesk \rho$ where $\rho$, $\sigma$ range over the set of all special representations of $G$.
\end{subprop}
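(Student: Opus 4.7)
My plan is to decompose $\End(E)$ by source-target pair and by polynomial degree, then induct on the degree in order to reduce every element to a combination of the specified generators.

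First I would write $\End(E)=\bigoplus_{\rho,\sigma}G\text{-}\Hom_{O_U}(O_U\otimesk\sigma,O_U\otimesk\rho)$, with $\rho,\sigma$ ranging over special irreducible representations. Each summand inherits the grading from $S=k[x,y]$, and its degree-$d$ piece equals $(S_d\otimesk\rho\otimesk\sigma^*)^G$. Since composition and multiplication by $G$-invariant functions both preserve homogeneity, it suffices to show that every homogeneous element of every summand lies in the $O_X$-subalgebra $\cA$ generated by the identities and the $\phi^i_{\rho,\sigma}$.

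For the base case $d=0$, Schur's lemma gives $(\rho\otimesk\sigma^*)^G=0$ when $\rho\not\simeq\sigma$ and $k\cdot\id$ when $\rho\simeq\sigma$, both of which are in $\cA$. For the inductive step, let $f:O_U\otimesk\sigma\to O_U\otimesk\rho$ be a $G$-equivariant homogeneous homomorphism of degree $d\ge 1$. Then $f(O_U\otimesk\sigma)\subset\fm\otimesk\rho$, and since this image is an $O_U$-submodule generated by the special representation $f(1\otimes\sigma)\simeq\sigma$, it lies in $J(\rho)$ by Proposition~\ref{prop:Phi(OErho(-1))}(3). By Remark~\ref{rem:widetilde alpha} the map $\widetilde{\alpha}=\sum_{\tau,i}\phi^i_{\rho,\tau}:\bigoplus_{\tau,i}O_U\otimesk\tau\to J(\rho)$ is a surjective $G$-equivariant $O_U$-homomorphism whose components are homogeneous of strictly positive degrees. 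Using freeness of $O_U\otimesk\sigma$ and averaging over $G$ (legitimate because $|G|$ is coprime to the characteristic of $k$), I lift $f$ through $\widetilde{\alpha}$ to a $G$-equivariant graded homomorphism $\tilde f$ whose $(i,\tau)$-component $\tilde f_{i,\tau}:O_U\otimesk\sigma\to O_U\otimesk\tau$ is homogeneous of degree $d-\deg\phi^i_{\rho,\tau}<d$. By the induction hypothesis each $\tilde f_{i,\tau}$ lies in $\cA$, and hence so does $f=\sum_{i,\tau}\phi^i_{\rho,\tau}\circ\tilde f_{i,\tau}$.

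The main subtlety is the combined $G$-equivariant and graded lifting through $\widetilde{\alpha}$. An ordinary $O_U$-linear lift exists by projectivity of the free module $O_U\otimesk\sigma$; averaging over $G$ (using $|G|$ invertible in $k$) upgrades this to a $G$-equivariant lift; and a homogeneous lift is then obtained by extracting the appropriate graded components, since $\widetilde{\alpha}$ is a finite sum of homogeneous maps and the image of a homogeneous $f$ is concentrated in a single degree. Granted these reductions, the induction terminates because each $\phi^i_{\rho,\tau}$ has strictly positive polynomial degree (its image lies in $\fm\otimesk\rho$), so every component of $\tilde f$ has strictly smaller degree than $f$.
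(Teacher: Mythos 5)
Your proof is correct and follows essentially the same route as the paper's: induction on the degree of a homogeneous $G$-equivariant homomorphism, factoring a positive-degree map through $J(\rho)$, and lifting through the surjection $\widetilde{\alpha}$ of Remark~\ref{rem:widetilde alpha} via projectivity to obtain components of strictly smaller degree. Your added justifications (Schur's lemma at degree $0$, the appeal to Proposition~\ref{prop:Phi(OErho(-1))}~(3) for the factorization through $J(\rho)$, and the averaging-plus-graded-truncation argument for the equivariant homogeneous lift) merely make explicit what the paper leaves implicit.
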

\begin{proof}
Let $\cA$ be 
the subalgebra of $\bigoplus_{\rho\text{:\text{special}}} O_U\otimesk \rho$
generated by $\{\phi_{\rho,\sigma}^i\}$ 
together with the identity morphisms of $O_U\otimesk \rho$.
Let $f:O_U \otimesk \sigma \to O_U \otimesk \rho$ be a non-zero homomorphism.
We prove by the induction on $\deg f$ 
that if $f$ is homogeneous then $f \in \cA$.
If $\deg f=0$, then $\sigma=\rho$ and $f$ is a scalar multiple of the identity of $O_U\otimesk \rho$.
If $\deg f>0$, then $f$ factors through $J(\rho)$. 
By Remark~\ref{rem:widetilde alpha}, $\widetilde{\alpha}$ is surjective.
Since $O_U\otimesk\sigma$ is projective as an $O_U$-module with $G$-action
\footnote{because $|G|$ is prime to the characteristic of $k$}, 
there exists a $G$-equivariant $O_U$-homomorphism $\psi:O_U\otimesk\sigma\to 
\oplus_{\tau:\text{special}}O_U \otimesk \tau$ such that 
$f=\widetilde{\alpha}\circ \psi$. Let $\psi=\oplus_{\tau:\text{special}}f_{\tau}^i$, where 
$f_{\tau}^i: O_U \otimesk \sigma \to O_U \otimesk \tau$ is a 
$G$-equivariant $O_U$-homomorphism, 
homogeneous of degree
$\deg f - \deg \phi_{\rho, \tau}^i<\deg f$.
Then $f$ is expressed as
$$
f=\sum_{i;\tau:\text{special}} \phi_{\rho, \tau}^i f_{\tau}^i.
$$
By the induction hypothesis, $f_\tau^i\in\cA$ and hence $f\in\cA$. 
\end{proof}

\begin{subdefn}
Following Craw \cite{Craw12} and Wemyss \cite{Wemyss11},
we consider the following quiver $Q=Q_X$:
\begin{itemize}
\item[--] the vertices of $Q$ are the special representations and
\item[--] the number of arrows from $\rho$ to $\sigma$ is $a_{\rho\sigma}$.
\end{itemize}

Following Craw \cite{Craw12}
we call it
the special McKay quiver
of $X$.
\end{subdefn}

\begin{subdefn}
Let $kQ$ be the path algebra of $Q=Q_X$.
Proposition \ref{prop:endo} shows that there
is an isomorphism
$$\End\left(\bigoplus_{\rho\text{:\text{special}}} O_U\otimesk \rho\right) \simeq kQ/\cI,$$
where $kQ$ is an $O_X$-algebra, and 
$\cI$ is a certain two-sided ideal of $kQ$.
The $O_X$-algebra $\End(E_X)$ is called the reconstruction algebra of $X$. 
\end{subdefn}

\begin{subrem}
The generators of the ideal $\cI$ (in the completion $\widehat{kQ}$) 
are given explicitly in the cases of type A and type D by Wemyss \cite{WemyssA, WemyssD1, WemyssD2}.
\end{subrem}
\subsection{$G$-clusters}
In this subsection, we generalize Theorem \ref{thm:mckay tensor prod}.\par
As before we use the following notation:
\begin{align*}
\Ext^k(A,B):&=\Hom^k_{D(Y)}(A,B),\quad
\Ext^k(C,D):=\Hom^k_{D^G(U)}(C,D),
\end{align*}for $A,B\in D(Y)$ and $C,D\in D^G(U)$.

For $y \in E(\rho)$, recall that there are subrepresentations $W(\rho) \subset \Gen(I_y)$
and $W(\rho_0) \subset \Gen(I_y)$ isomorphic to $\rho$ and $\rho_0$ respectively.
Moreover, there is a mono-special $O_U$-submodule 
$O_UV(\rho)$ of $O_{Z_y}$ 
by Theorem \ref{thm:socle-special}.
\begin{subdefn}
Let $\SSoc(O_{Z_y})$ be the direct sum
of $k[G]$-submodules $V(\rho) \subset O_{Z_y}$
generating mono-special $O_U$-submodules
with $y \in E(\rho)$. 
\end{subdefn}
Take any $k[G]$-submodule $\wV(\rho) \subset O_U$ 
which is a lift of $V(\rho)$.

\begin{subthm}\label{thm:cluster-quiver}
Let $\rho$ and $\sigma$ be special representations, and $\rho\neq\sigma$.
\begin{enumerate}
\item
Suppose $\rho$ and $\sigma$ are non-trivial.
Then $a_{\rho \sigma}$ is equal to the number of
points $y \in F$ such that
\begin{itemize}
\item[--] $W(\sigma) \subset \Gen(I_y)$,
\item[--] $V(\rho) \subset \SSoc(O_{Z_y})$ and
\item[--] $W(\sigma) \subset (\fm \wV(\rho) + \fm I_y)/\fm I_y$.
\end{itemize}
Actually, $a_{\rho \sigma}$ is either $0$ or $1$ and $a_{\rho \sigma}=1$ if and only if
$E(\rho) \cap E(\sigma)$ consists of a unique point $y$.
\item
Suppose $\sigma=\rho_0$.
Then, $a_{\rho\sigma} \ne 0$ if and only if
there exists $y \in F$ such that
\begin{itemize}
\item[--]
$W(\sigma) \subset \Gen(I_y)$, 
\item[--]
$V(\rho) \subset \SSoc(O_{Z_y})$, and
\item[--]
$W(\sigma) \subset (\fm \wV(\rho) + \fm I_y)/\fm I_y$.
\end{itemize}
If this holds for one $y$, then
it holds for any $y \in E(\rho)$.\footnote{ 
$W(\sigma)$ depends on $y$. See Subsection~\ref{subsec:dihedral case}. }
\item Suppose $\rho=\rho_0$ and $\sigma\neq\rho_0$.
Then $a_{\rho\sigma}\leq 2$, equality holding if and only if 
$F=E(\sigma)$, and $G$ is a cyclic group $\langle \frac{1}{n}(1,1) \rangle$. 
Moreover,
\begin{itemize} 
\item[(3a)] if $\dim\sigma=1$, then 
 $a_{\rho\sigma}$ is described as follows:
$$
a_{\rho\sigma}= \begin{cases}
0 &\text{if $W(\sigma) \subset \fm J(\rho_0)/\fm I_y$ for any $y \in E(\sigma)$}\\
2 &\text{if $W(\sigma) \not\subset \fm J(\rho_0)/\fm I_y$ for any $y \in E(\sigma)$}\\
1 &\text{otherwise}.
\end{cases}
$$
\item[(3b)] 
if $\dim \sigma>1$,
then $a_{\rho\sigma}$ is either $0$ or $1$.
The equality $a_{\rho\sigma}=1$ holds if and only if there is a $G$-invariant ideal $J$ of $O_U$ containing $I_y$ for all $y\in F$
such that
as a $k[G]$-module
$H^0(O_U/J)$ is isomorphic to the direct sum of $\rho_0 \oplus \sigma$ and non-special representations.
\end{itemize}
\end{enumerate}
\end{subthm}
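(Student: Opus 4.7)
The governing identity is Corollary~\ref{cor:sharp arrows}, which decomposes $J(\rho)/\fm J(\rho)$ as $\bigoplus_\sigma \sigma^{\oplus a_{\rho\sigma}}$. For $y \in E(\rho)$, the homomorphism $\alpha_y\colon O_U \otimesk \rho \to O_{Z_y}$ of \eqref{eq:alpha_y} factors through $O_U\otimesk \rho/J(\rho)$ and sends $k \otimesk \rho$ to $V(\rho)$ by Theorem~\ref{thm:socle-special}. Restricting $\alpha_y$ to $\fm \otimesk \rho$ therefore matches each $\sigma$-isotypic summand of $J(\rho)/\fm J(\rho)$ with a $\sigma$-summand of $\Gen(I_y)$ that lies in $(\fm \wV(\rho) + \fm I_y)/\fm I_y$. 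This translates each arrow $\rho \to \sigma$ of the special McKay quiver into the third bullet, while the first two bullets record the range conditions $y \in E(\sigma)$ and $y \in E(\rho)$ via Theorems~\ref{thm:mckay isom} and~\ref{thm:socle-special}.

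For part~(1), with $\rho$ and $\sigma$ both non-trivial special, the dual graph of the resolution is a tree with transverse intersections, so $a_{\rho\sigma} = E(\rho)E(\sigma) \in \{0,1\}$. The plan is to show that the third condition holds precisely at the (unique) intersection point of $E(\rho) \cap E(\sigma)$ by adapting the cup-product argument of Subsections~\ref{subsec:cup products}--\ref{subsec:sl2}: transport the pairing $\Ext^0(O_0 \otimesk \rho, O_{Z_y}) \times \Ext^1(O_{Z_y}, O_0 \otimesk \sigma) \to \Ext^1(O_0 \otimesk \rho, O_0 \otimesk \sigma)$ through $\Phi_{O_\cZ}^*$ via Corollary~\ref{cor:Phi(O0rho)}, and conclude with Lemmas~\ref{lemma:cohom Ext Oy OE} and~\ref{lemma:cup product coh}. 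The SL$(2)$ assumption in Lemma~\ref{lemma:cup prod}~(1) is inessential, since the proof there never uses $K_Y = 0$.

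For part~(2), the condition $W(\rho_0) \subset \Gen(I_y)$ holds for every $y \in F$ by the $O_F(-F) \otimesk \rho_0$ summand of Theorem~\ref{thm:mckay isom}, so only the second and third bullets vary with $y$. The Serre-dualized pairing arising as in \eqref{eq:2nd cup prod DcY} (now with $K_Y|_F$ nontrivial) factors through the evaluation map $\Hom(\omega_F, O_{E(\rho)}(-1)) \otimesk \omega_F \to O_{E(\rho)}(-1)$, which is either identically zero (precisely when $FE(\rho) = 0$) or surjective at every point of $E(\rho)$. This gives the uniformity claim, and comparing with $a_{\rho\rho_0} = -FE(\rho)$ produces the arrow count.

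Part~(3) is where I expect the main obstacle. The bound $a_{\rho_0\sigma} \le 2$ reduces, via $-(K_Y+F)E(\sigma) = 2 + E(\sigma)^2 - FE(\sigma)$, to the inequality $FE(\sigma) \ge E(\sigma)^2$, which I would establish via Laufer's inductive construction of the fundamental cycle: for $E(\sigma)$ of multiplicity $m \ge 2$ in $F$, the last incrementing step of the algorithm furnishes the required strict inequality, while for $m = 1$ the inequality $\sum_{j \ne \sigma} m_j E_j E(\sigma) \ge 0$ is immediate. Equality forces $F = E(\sigma)$ to be a single curve, which characterizes $G = \langle \tfrac{1}{n}(1,1)\rangle$. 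Sub-case (3a) with $\dim \sigma = 1$ follows by a point-by-point count along $E(\sigma)$ using Proposition~\ref{prop:spectral} and the containment analysis of $W(\sigma)$ inside $\fm J(\rho_0)/\fm I_y$. For sub-case (3b) with $\dim \sigma > 1$, the ideal $J$ realizing the single arrow is constructed from Corollary~\ref{cor:J(rho0)} and Lemma~\ref{lemma:Phi^i(omega)}: the $k[G]$-structure $H^0(O_U/J) \simeq \rho_0 \oplus \sigma \oplus (\text{non-special})$ reflects that the semi-orthogonal projection $\Phi^0_{O_\cZ}(\omega_F[1])$ contains $\rho_0$ with multiplicity one and no other special representation, so that any arrow $\rho_0 \to \sigma$ is necessarily unique.
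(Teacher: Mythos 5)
Your overall strategy is the paper's: translate each bullet condition into the (non)vanishing of a cup product, transport it through $\Phi_{O_\cZ}^*$ using Corollary~\ref{cor:Phi(O0rho)}, and compute on $Y$. For parts (1) and (2) your sketch essentially reproduces the paper's argument: the paper turns the third bullet into the non-vanishing of the map $\Ext^1(O_{Z_y},O_0\otimesk\sigma)\to\Ext^1((O_U\otimesk\rho)/J(\rho),O_0\otimesk\sigma)$ induced by $\alpha_y$, identifies it with $\Ext^1(O_y,O_{E(\sigma)}(-1))\to\Ext^1(O_{E(\rho)}(-1),O_{E(\sigma)}(-1))$ (resp.\ with the Serre dual $\Hom(O_F(F),O_{E(\rho)}(-1))\to\Hom(O_F(F),O_y)$ when $\sigma=\rho_0$), and your observation that the $\SL(2)$ hypothesis in Lemma~\ref{lemma:cup prod} is not needed is exactly what the paper exploits. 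For the inequality $(F-E(\sigma))E(\sigma)\ge 0$ the paper uses only the minimality of $F$ (if it were negative, $F-E(\sigma)$ would be a smaller cycle with non-positive intersections); Laufer's algorithm works but is heavier than necessary.

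The genuine gap is in part (3), which you correctly flag as the main obstacle but do not close. First, the pivot of both (3a) and (3b) is the equivalence between $W(\sigma)\not\subset\fm J(\rho_0)/\fm I_y$ and the non-vanishing of the cup product $\Hom(O_{Z_y},O_U/J(\rho_0))\otimesk\Ext^1(O_U/J(\rho_0),O_0\otimesk\sigma)\to\Ext^1(O_{Z_y},O_0\otimesk\sigma)$; the paper proves this by a diagram chase producing an intermediate ideal $J'$ with $\fm J(\rho_0)\subset J'\subset J(\rho_0)$, and nothing in your proposal supplies this step. Second, the actual computation of that cup product is what produces the trichotomy in (3a): using $\Phi_{O_\cZ}^*(O_U/J(\rho_0))\simeq\omega_F[1]$ and the two exact sequences $0\to K_Y\to K_Y(F)\to\omega_F\to 0$ and $0\to K_Y(F-E(\sigma))\to K_Y(F)\to O_{E(\sigma)}(-1)\to 0$, the map reduces to $\Ext^0(O_Y,O_y)^{\vee}\to\Ext^0(O_Y(F-E(\sigma)),O_y)^{\vee}$, which is non-zero exactly for $y\in E(\sigma)\setminus(F-E(\sigma))$; the dichotomy $\dim\sigma=1$ versus $\dim\sigma>1$ enters because $\dim\sigma$ equals the multiplicity of $E(\sigma)$ in $F$ (Theorem~\ref{thm:special mckay corresp}~(2)), so for $\dim\sigma>1$ this map is identically zero. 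A ``point-by-point count using Proposition~\ref{prop:spectral}'' does not substitute for this. Third, in (3b) your reason for uniqueness of the arrow is a non sequitur: Lemma~\ref{lemma:Phi^i(omega)}~(2) concerns the special content of the quotient $\Phi^0_{O_\cZ}(\omega_F[1])$, not of $J(\rho_0)/\fm J(\rho_0)$ where $a_{\rho_0\sigma}$ lives; the correct reason is that $a_{\rho_0\sigma}=2$ forces $F=E(\sigma)$ and hence $G$ cyclic with all irreducibles one-dimensional, contradicting $\dim\sigma>1$. Finally, the construction of the ideal $J$ in (3b) with $I_y\subset J$ for all $y$ requires the vanishing of the cup product to show that $\psi\colon O_{Z_y}\to O_U/J(\rho_0)$ factors through $O_U/J$, where $J\subset J(\rho_0)$ is chosen with $J(\rho_0)/J\simeq O_0\otimesk\sigma$; this implication is absent from your proposal.
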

\begin{proof}Let $\rho\in\Irr(G)$ be special and non-trivial.  
If $y\in E(\rho)$,  
then $\alpha_y$ in (\ref{eq:alpha_y}) 
induces a homomorphism 
\begin{equation}\label{eq:extmap}
\Ext^1(O_{Z_y}, O_0\otimesk \sigma) \to \Ext^1((O_U \otimesk \rho)/J(\rho), O_0\otimesk \sigma). 
\end{equation} Suppose $W(\sigma)\subset \Gen(I_y)$. For a non-trivial element of $\Ext^1(O_{Z_y}, O_0\otimesk \sigma)\neq 0$,
there is a non-trivial extension $(\ext_{\sigma})$ of Section \ref{subsec:socle OU/mIy}:
\begin{equation}\label{eq:extclass1}
0\to W(\sigma)\simeq I_y /J_{\sigma} \to O_U/J_{\sigma} \to O_{Z_y}\simeq O_U/I_y\to 0,
\end{equation}for some ideal $J_\sigma$ of $O_U$
such that $\fm I_y\subset J_\sigma \subset I_y$. 
The pullback of (\ref{eq:extclass1}) 
by $O_UV(\rho)\hookrightarrow O_{Z_y}$ is given by
\begin{equation}\label{eq:extclass2}
0\to W(\sigma)\simeq I_y /J_{\sigma}\to O_U\wV(\rho)/J_{\sigma} \to O_UV(\rho)\to 0.
\end{equation}
Then we see, in the same manner as in Subsection~\ref{subsec:cup products}, that 
$W(\sigma)\subset(\fm \wV(\rho)  + \fm I_y)/\fm I_y$ if and only if 
(\ref{eq:extclass2}) does not split if and only if
(\ref{eq:extmap}) is non-zero.
Since $O_{Z_y} \simeq \Phi_{O_\cZ}(O_y)$ and $(O_U \otimesk \rho)/J(\rho) \simeq \Phi_{O_\cZ}(O_{E(\rho)}(-1))$,
\eqref{eq:extmap} is isomorphic to
\begin{equation}\label{eq:extmap2}
\Ext^1(O_y, \Phi_{O_\cZ}^*(O_0\otimesk \sigma)) \to \Ext^1(O_{E(\rho)}(-1), \Phi_{O_\cZ}^*(O_0\otimesk \sigma))
\end{equation}
induced by $O_{E(\rho)}(-1) \to O_y$.
If $\sigma \ne \rho_0$, then by Corollary \ref{cor:Phi(O0rho)}, \eqref{eq:extmap2} is
$$
\Ext^1(O_y, O_{E(\sigma)}(-1)) \to \Ext^1(O_{E(\rho)}(-1), O_{E(\sigma)}(-1)),
$$
which is non-zero if and only if $y \in E(\rho) \cap E(\sigma)$.
Thus (1) is proved.

If $\sigma=\rho_0$,  then \eqref{eq:extmap2} is  isomorphic to
\begin{equation}\label{eq:extmap3}
\Ext^2(O_y, \omega_F) \to \Ext^2(O_{E(\rho)}(-1), \omega_F).
\end{equation}
Since $\omega_F \simeq O_F(F) \otimes_{O_Y} K_Y$, the Serre dual of \eqref{eq:extmap3} is
$$
\Hom(O_F(F), O_{E(\rho)}(-1)) \to \Hom(O_F(F), O_y),
$$
which is non-zero if and only if $FE(\rho)<0$.
This proves (2).

Next we shall prove (3). Suppose $\rho=\rho_0$ and $y\in F$. 
Since $I_y$ is generated by special representations 
by Theorem \ref{thm:local McKay},
$I_y$ is contained in $J(\rho_0)$ and $\Hom(O_{Z_y}, O_U/J(\rho_0))$ is a one-dimensional vector space generated by the natural surjection, which is denoted by $\psi$. \par
Now suppose $y\in E(\sigma)$. Consider the cup product
\begin{equation}\label{eq:extmap4}
\begin{aligned}\Hom(O_{Z_y}, O_U/J(\rho_0)) \otimesk  \Ext^1(O_U/J(\rho_0), O_0 \otimesk \sigma)\\
\to \Ext^1(O_{Z_y}, O_0\otimesk \sigma). 
\end{aligned}
\end{equation}

We show that $W(\sigma) \not\subset \fm J(\rho_0)/\fm I_y$ holds if and only if \eqref{eq:extmap4} is a non-zero map.
Notice that $W(\sigma) \not\subset \fm J(\rho_0)/\fm I_y$ holds if and only if the composite
$$
W(\sigma) \hookrightarrow I_y/\fm I_y \to J(\rho_0)/\fm J(\rho_0)
$$
is injective.
It is equivalent to the existence of an ideal $J'$ with $\fm J(\rho_0) \subset J' \subset J(\rho_0)$ such that
$W(\sigma) \to J(\rho_0)/J'$ is an isomorphism,
and therefore to the existence of a commutative diagram of the following form:
\begin{diagram}
0 & \rTo & W(\sigma) & \rTo & O_U/J' & \rTo & O_U/J(\rho_0) & \rTo & 0 \\
&& \Vert && \uTo && \uTo^\psi && \\
0 & \rTo & W(\sigma) & \rTo & O_U/J_\sigma & \rTo & O_{Z_y} & \rTo & 0
\end{diagram}
where the second row corresponds to the generator of $\Ext^1(O_{Z_y}, O_0\otimesk \sigma)$.
Hence the condition $W(\sigma) \not\subset \fm J(\rho_0)/\fm I_y$ is equivalent to the non-vanishing of \eqref{eq:extmap4}.

Now we claim that \eqref{eq:extmap4} is isomorphic to
\begin{equation}\label{eq:extmap5}
\begin{aligned}
\Ext^1(O_y, \omega_F) \otimesk \Hom(\omega_F, O_{E(\sigma)}(-1))\\
\to \Ext^1(O_y, O_{E(\sigma)}(-1)).
\end{aligned}
\end{equation}
By Lemma \ref{lemma:Phi^i(omega)}, $\Phi_{O_\cZ}^{(-1)}(\omega_F[1])$ consists of non-special representations. Hence by Corollary \ref{cor:Phi(O0rho)} $\Phi_{O_\cZ}^*(\Phi_{O_\cZ}^{(-1)}(\omega_F[1]))=0$.
This implies
$$
\Phi_{O_\cZ}^*(O_U/J(\rho_0)) 
\simeq \Phi_{O_\cZ}^*(\Phi_{O_\cZ}^0(\omega_F[1]))\simeq \Phi_{O_\cZ}^*(\Phi_{O_\cZ}(\omega_F[1])) \simeq \omega_F[1]
$$
and therefore
$$
\Hom(O_{Z_y}, O_U/J(\rho_0))\simeq \Hom(O_y, \Phi_{O_\cZ}^*(O_U/J(\rho_0))
\simeq \Ext^1(O_y, \omega_F).
$$
Next by the proof of Proposition~\ref{prop:spectral},
the following is also true:
$$
\Ext^1(O_U/J(\rho_0), O_0 \otimesk \sigma) \simeq 
\Hom(\omega_F, O_{E(\sigma)}(-1)).
$$
We also have by Corollary~\ref{cor:Phi(O0rho)}
$$
\Ext^1(O_{Z_y}, O_0\otimesk \sigma) \simeq \Ext^1(\Phi_{O_\cZ}(O_y), O_0\otimesk \sigma)
\simeq \Ext^1(O_y, O_{E(\sigma)}(-1)).
$$
Thus \eqref{eq:extmap4} is isomorphic to \eqref{eq:extmap5}.

In \eqref{eq:extmap5}, we see
$$
\dim \Hom(\omega_F, O_{E(\sigma)}(-1)) = a_{\rho_0 \sigma}
$$
and hence if \eqref{eq:extmap5} is non-zero, then $a_{\rho_0 \sigma} \geq 1$ 
and $y\in E(\sigma)$.
Especially, if $a_{\rho_0 \sigma} =0$, then we have $W(\sigma) \subset \fm J(\rho_0)/\fm I_y$ for any $y \in E(\sigma)$.

Now let us prove $a_{\rho_0 \sigma} \le 2$.
Since $F$ is the fundamental divisor, we have $(F-E(\sigma))E(\sigma)\geq 0$, with equality holding if and only if $F=E(\sigma)$. Hence
\begin{align*}
a_{\rho_0\sigma}=-(K_Y + F)E(\sigma)&=-\left((K_Y+E(\sigma))E(\sigma)+(F-E(\sigma))E(\sigma)\right) \\
&=-(-2 + (F-E(\sigma))E(\sigma)) \leq 2.
\end{align*}
This implies that $a_{\rho_0 \sigma} \le 2$, and equality holds
if and only if $F=E(\sigma)$. Moreover 
if $F=E(\sigma)$, then it is easy to see that $G$ is a cyclic group $\langle \frac{1}{n}(1,1) \rangle$ where $E(\sigma)^2=-n$, 
$\dim\sigma=1$ for any $\sigma\in\Irr(X)$. 
In this case, one can prove that (\ref{eq:extmap5}) is non-zero, so that
$W(\sigma) \not\subset \fm J(\rho_0)/\fm I_y$ for every $y \in E(\sigma)$.

It remains to consider the case $a_{\rho_0 \sigma}=1$. Then 
$$\Hom(\omega_F, O_{E(\sigma)}(-1))\simeq H^0(O_{E(\sigma)})\simeq k.$$
Therefore by the exact sequences
\begin{align*}
0 \to K_Y \to &K_Y(F) \to \omega_F \to 0 \\
0 \to K_Y(F-E(\sigma)) \to &K_Y(F) \to O_{E(\sigma)}(-1) \to 0,
\end{align*}
we see that 
\eqref{eq:extmap5} is equivalent to
\begin{equation*}\label{eq:extmap6}
\Ext^2(O_y, K_Y) \to \Ext^2(O_y, K_Y(F-E(\sigma))),
\end{equation*}which is isomorphic to 
\begin{equation}\label{eq:extmap7}
\Ext^0(O_Y, O_y)^{\vee} \to \Ext^0(O_Y(F-E(\sigma)), O_y)^{\vee}
\end{equation}
induced by the inclusion $O_Y\hookrightarrow O_Y(F-E(\sigma))$. \par
If $\dim \sigma =1$, then the coefficient of $E(\sigma)$ in $F$ is $1$
and \eqref{eq:extmap7} 
is non-zero if and only if $y \in E(\sigma) \setminus (F-E(\sigma))$.
Therefore, $W(\sigma) \not\subset \fm J(\rho_0)/\fm I_y$ holds for $y \in E(\sigma) \setminus (F-E(\sigma))$ and $W(\sigma) \subset \fm J(\rho_0)/\fm I_y$ holds for $y \in E(\sigma) \cap (F-E(\sigma))$.
\par
Finally suppose $\dim \sigma >1$.
Then $G$ is not a cyclic group and hence $a_{\rho_0\sigma}\ne 0$ implies $a_{\rho_0\sigma}=1$.
In this case, \eqref{eq:extmap7} is zero since the coefficient 
of $E(\sigma)$ in $F$ is greater than $1$
and therefore \eqref{eq:extmap4} is also zero.
Let $J \subset J(\rho_0)$ be the ideal with
$J(\rho_0)/J=O_0\otimesk \sigma^{\oplus a_{\rho_0\sigma}}=O_0\otimesk \sigma$.
Then the vanishing of \eqref{eq:extmap4} implies that $\psi: O_{Z_y} \to O_U/J(\rho_0)$
factors through $O_U/J$, which means $I_y \subset J$.
\end{proof}

\subsection{The integer $a_{\rho \rho_0}$}
Theorem~\ref{thm:cluster-quiver}~(2) 
includes no information about $a_{\rho \rho_0}$, 
the number of arrows
from $\rho$ to $\rho_0$ of the special McKay quiver.
In order to explicitly describe it in terms of $G$-clusters,
we consider Theorem~\ref{thm:cluster-quiver}~(2) for all $y \in E(\rho)$ 
simultaneously.\par
For a scheme $S$ of finite type over $k$,
consider the integral functor
\begin{equation}\label{eq:Phi_S}
\Phi_{O_S \otimesk O_\cZ} :D^b(S \times Y) \to D^b_G(S \times U)
\end{equation}
with kernel object $O_S \otimesk O_{\cZ}=\pi_{YU}^*(O_\cZ) \in D^b_G(S \times Y \times U)$,
where $\pi_{YU}$ is the projection to $Y\times U$.
To be more precise, $\Phi_{O_S \otimesk O_\cZ}$ is defined as
$$
\Phi_{O_S \otimesk O_\cZ}(-)=\bR(\pi_{SU})_*(\bL\pi_{SY}^*(-)\overset{\bL}{\otimes}_{O_{S\times Y \times U}}
\bL\pi_{YU}^*(O_\cZ)).
$$
Its right adjoint $\Phi_{O_Y \otimesk O_\cZ}^*$ is also an integral functor
whose kernel object is $O_S \otimesk (K_Y \otimes_{O_Y} O_\cZ^\vee[2])$, the pull back of
the kernel object of $\Phi_{O_\cZ}^*$.\par
In parallel with $\Phi_{O_\cZ}^* \circ \Phi_{O_\cZ} \simeq I_{D^b(Y)}$, we have
$$
\Phi_{O_Y \otimesk O_\cZ}^* \circ \Phi_{O_Y \otimesk O_\cZ} \simeq I_{D^b(S\times Y)}.
$$
Moreover, the adjointness is refined as follows:
\begin{equation}\label{eq:S-adjoint}
\begin{aligned}
\left(\bR(\pi_S)_*\bR\chom_{O_{S\times U}}(\Phi_{O_Y \otimesk O_{\cZ}}(\alpha), \beta)\right)^G\\
\simeq
\bR(\pi_S)_*\bR\chom_{O_{S \times Y}} (\alpha, \Phi_{O_Y \otimesk {O_\cZ}}^*(\beta)).
\end{aligned}
\end{equation}
For a non-trivial special representation $\rho$,
the $k[G]$-submodule $V(\rho) \subset O_{Z_y}$ depends on $y \in E(\rho)$
and is denoted by $V_y(\rho)$ in the sequel.
\begin{sublemma}
There is an $O_{E(\rho)}$-submodule $\cV(\rho) \subset O_{E(\rho)} \otimes_{O_Y} O_{\cZ}$ (which is not necessarily an $O_{E(\rho)\times U}$-submodule)
such that
$$\cV(\rho) \otimes_{O_{E(\rho)}} O_y = V_y(\rho)$$
for $y \in E(\rho)$
and
$$\cV(\rho) \simeq O_{E(\rho)}(1) \otimes \rho.$$
\end{sublemma}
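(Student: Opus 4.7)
The plan is to construct $\cV(\rho)$ as a canonical $O_{E(\rho)}$-subsheaf of the $\rho$-isotypic component of $p_*O_\cZ|_{E(\rho)}$, and then to verify fibre-by-fibre that $\cV(\rho)\otimes_{O_{E(\rho)}}O_y$ agrees with $V_y(\rho)$ for each $y \in E(\rho)$.

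First I invoke Lemma~\ref{sublemma:direct image of OZ and OU}~(5), which restricted to $E(\rho)$ gives the isotypic decomposition
$$p_*O_\cZ \otimes_{O_Y} O_{E(\rho)} \;\simeq\; \bigoplus_{\sigma\in\Irr(G)} (\cM_\sigma \otimes_{O_Y} O_{E(\rho)}) \otimesk \sigma,$$
whose $\rho$-summand is $(\cM_\rho|_{E(\rho)})\otimesk\rho$. The splitting \eqref{eq:split}, namely $\cM_\rho|_{E(\rho)}\simeq O_{E(\rho)}(1)\oplus O_{E(\rho)}^{\oplus(\dim\rho-1)}$, contains a distinguished $O_{E(\rho)}$-subsheaf $O_{E(\rho)}(1)$: it is canonically embedded because $\Hom(O_{E(\rho)}(1),O_{E(\rho)})=H^0(O_{E(\rho)}(-1))=0$, so no other copy of $O_{E(\rho)}(1)$ can sit inside the trivial summand. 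I then set
$$\cV(\rho) \;:=\; O_{E(\rho)}(1)\otimesk\rho \;\subset\; (\cM_\rho|_{E(\rho)})\otimesk\rho,$$
regarded as an $O_{E(\rho)}$-submodule of $O_{E(\rho)}\otimes_{O_Y}O_\cZ$ via the identification of this latter sheaf (supported on $\cZ\times_Y E(\rho)$) with $p_*O_\cZ\otimes_{O_Y}O_{E(\rho)}$ after push-forward along the affine morphism $p$. The isomorphism $\cV(\rho)\simeq O_{E(\rho)}(1)\otimesk\rho$ is then tautological.

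It remains to check $\cV(\rho)\otimes_{O_{E(\rho)}}O_y = V_y(\rho)$ for every $y\in E(\rho)$. By Proposition~\ref{prop:Phi(OErho(-1))}, $\Phi_{O_\cZ}(O_{E(\rho)}(-1)) \simeq O_U\otimesk\rho/J(\rho)$, and Lemma~\ref{lemma:sheaf} identifies the $\rho$-isotypic component of its $\pi_*$ with $H^0(\cM_\rho\otimes_{O_Y}O_{E(\rho)}(-1))\otimesk\rho$. Using $\cM_\rho\otimes O_{E(\rho)}(-1)\simeq O_{E(\rho)}\oplus O_{E(\rho)}(-1)^{\oplus(\dim\rho-1)}$, this is one-dimensional and it arises from the very same distinguished $O_{E(\rho)}(1)$-summand of $\cM_\rho|_{E(\rho)}$ used above. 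By Theorem~\ref{thm:socle-special}, $V_y(\rho)=\zeta_y(k\otimesk\rho)$, and $\zeta_y$ corresponds under $\Phi_{O_\cZ}^*$ to the non-zero evaluation map $O_{E(\rho)}(-1)\to O_y$. Naturality of Lemma~\ref{lemma:sheaf} under this morphism then shows that the induced $\rho$-isotypic map $k\to(\cM_\rho)_y$ is the evaluation at $y$ of the unique global section of the $O_{E(\rho)}$-summand of $\cM_\rho\otimes O_{E(\rho)}(-1)$, whose image is the line $O_{E(\rho)}(1)|_y\subset(\cM_\rho)_y$. Consequently $V_y(\rho)=O_{E(\rho)}(1)|_y\otimesk\rho=\cV(\rho)\otimes_{O_{E(\rho)}}O_y$, as desired.

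The main obstacle is not conceptual but one of bookkeeping: tracking the canonical $O_{E(\rho)}(1)$-summand consistently across two guises---on the one hand as the distinguished sub-line bundle of $\cM_\rho|_{E(\rho)}\otimesk\rho\subset p_*O_\cZ\otimes_{O_Y}O_{E(\rho)}$, and on the other as the one-dimensional $H^0$ appearing in Lemma~\ref{lemma:sheaf} applied to $\Phi_{O_\cZ}(O_{E(\rho)}(-1))$---so that the fibrewise comparison at $y$ reproduces exactly the copy of $\rho$ singled out by Theorem~\ref{thm:socle-special}.
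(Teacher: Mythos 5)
Your proof is correct, and it produces the same subsheaf as the paper, but by a genuinely different organization of the argument. The paper defines $\cV(\rho)$ as the image of a relative evaluation map (the family version of $\alpha_y$ in \eqref{eq:alpha_y}), computes the relevant $\chom$-sheaf to be $O_{E(\rho)}(1)$ via the relative adjunction \eqref{eq:S-adjoint} with $S=E(\rho)$, and then obtains the fibrewise identity $\cV(\rho)\otimes_{O_{E(\rho)}}O_y=V_y(\rho)$ essentially as a base-change statement, using the splitting \eqref{eq:split} only to show that the cokernel of the injection $O_{E(\rho)}(1)\otimesk\rho\to O_{E(\rho)}\otimes_{O_Y}O_\cZ$ is flat over $E(\rho)$, so that taking the image commutes with restriction to $y$. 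You instead use \eqref{eq:split} at the outset to pin down $\cV(\rho)$ as the unique subsheaf of the $\rho$-isotypic component of $p_*O_\cZ|_{E(\rho)}$ isomorphic to $O_{E(\rho)}(1)$ --- the vanishing $\Hom(O_{E(\rho)}(1),O_{E(\rho)})=0$ does make this canonical --- which renders the isomorphism $\cV(\rho)\simeq O_{E(\rho)}(1)\otimesk\rho$ tautological; the real work is then shifted to the fibrewise comparison, which you carry out by tracing $\zeta_y$ through the isomorphism of Lemma~\ref{lemma:sheaf}. That step is sound: the isomorphism of Lemma~\ref{lemma:sheaf} comes from the projection formula and the decomposition of $p_*O_\cZ$, so it is functorial in the argument $\cC$, and applying it to $O_{E(\rho)}(-1)\to O_y$ identifies the $\rho$-part of $\zeta_y$ with evaluation of the one-dimensional $H^0(\cM_\rho\otimes_{O_Y}O_{E(\rho)}(-1))$ at $y$, whose image is exactly the line $(O_{E(\rho)}(1))_y\subset(\cM_\rho)_y$; you assert rather than prove this naturality, but it is a legitimate appeal. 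The trade-off is that the paper's packaging constructs on the way the relative evaluation map that is reused immediately afterwards to define the family $\alpha$ and in Proposition~\ref{prop:basis}, whereas your route avoids \eqref{eq:S-adjoint} altogether and is more elementary but local in character.
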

\begin{proof}
Let $E=E(\rho)$.
We define $\cV(\rho)$ as the image of the evaluation map
\begin{align*}\label{eq:evalmap}
(\pi_{E})_*\chom_{O_{E\times U}}(O_{E} \otimesk (O_U\otimesk \rho /J(\rho)), O_{E\otimes_{O_Y} O_{\cZ}})^G &\otimes_{O_{E}} (O_{E}\otimesk \rho) \\
&\to O_{E}\otimes_{O_Y} O_{\cZ}
\end{align*}
of $O_E$-modules,
where $O_{E}\otimesk \rho \subset O_{E} \otimesk (O_U\otimesk \rho /J(\rho))$ is the tensor product of $O_E$ with the constant term $\rho \subset O_U\otimesk \rho /J(\rho)$.
Taking the $0$-th cohomology sheaves in \eqref{eq:S-adjoint} for $S=E$,
we obtain
$$
\begin{aligned}
(\pi_{E})_* \chom_{O_{E\times U}}&(O_{E} \otimesk (O_U\otimesk \rho /J(\rho), O_{E}\otimes_{O_Y} O_{\cZ})^G \\
&\simeq
(\pi_{E})_* \chom_{O_{E\times Y}}(O_{E}\otimesk O_E(-1),
O_{\Delta_{E}}) \simeq O_{E}(1).
\end{aligned}
$$
Therefore $\cV(\rho)$ is the image of a non-trivial map
\begin{equation}\label{eq:Vrho}
O_{E}(1) \otimesk \rho \to O_E \otimes_{O_Y} O_\cZ,
\end{equation}
which must be injective.
Moreover, by the isomorphisms
$$
O_E \otimes_{O_Y} O_\cZ \cong \bigoplus_{\rho}O_E \otimes_{O_Y}\cM_\rho \otimesk \rho
$$
and $O_E \otimes_{O_Y}\cM_\rho \cong O_E^{\dim\rho-1} \oplus O_E(1)$,
the cokernel of \eqref{eq:Vrho} is flat over $E$.
Therefore, $\cV(\rho) \otimes_{O_{E(\rho)}} O_y \to O_{Z_y}$ is also
injective and this yields $\cV(\rho) \otimes_{O_{E(\rho)}} O_y = V_y(\rho)$.
\end{proof}

By this lemma there is a map
$$
\alpha: O_{E(\rho)}(1)\otimesk O_{U} \otimesk \rho  \to O_{E(\rho)}\otimes_{O_Y} O_{\cZ} 
$$
of sheaves on $E(\rho) \times U$
such that its restriction to the fiber over $y \in E(\rho)$ coincides with $\alpha_y$ in \eqref{eq:alpha_y}.

Recall we have
$$
\cI/(\fm \cI+ I_F) \simeq \left(\sum_{\tau \ne \rho_0} O_{E(\tau)}(-1) \otimesk \tau\right) \oplus O_F(-F) \otimesk \rho_0
$$
by Theorem \ref{thm:mckay isom} and take the ideal $\cI_{\rho, \rho_o}$ with $\fm \cI+ I_F \subset\cI_{\rho, \rho_0} \subset \cI$ such that
$$
\cI/\cI_{\rho, \rho_0} = \cW(\rho_0):=O_{E(\rho)}(-F) \otimesk \rho_0.
$$
Then, by $ H^1(O_{E(\rho)}(-1) \otimes_{O_Y}\cI/\cI_{\rho, \rho_0}) =0$,
$\alpha$ can be lifted to
$$
\tilde{\alpha}:O_{E(\rho)}(1) \otimesk O_{U} \otimesk \rho   \to 
O_{E(\rho)}\otimes_{O_Y}
O_{Y\times U}/\cI_{\rho, \rho_0}.
$$
Since $\alpha$ factors through $O_{E(\rho)}(1) \otimesk (O_{U} \otimesk \rho /J(\rho))$,
$\tilde{\alpha}$ induces a map
\begin{equation}\label{eq:rho-rho0}
O_{E(\rho)}(1) \otimesk (J(\rho)/\fm J(\rho))^G \to (\cI/\cI_{\rho, \rho_0})^G
\end{equation}
which is isomorphic to
$$
O_{E(\rho)}(1)^{\oplus a_{\rho\rho_0}} \to O_{E(\rho)}(a_{\rho\rho_0})
$$
where $a_{\rho\rho_0}=-FE(\rho)$.
By \eqref{eq:rho-rho0} we have $a_{\rho\rho_0}$ maps in
$$
\Hom(O_{E(\rho)}(1), O_{E(\rho)}(a_{\rho\rho_0}))
$$
which corresponds to $a_{\rho\rho_0}$ arrows from $\rho$ to $\rho_0$.
In fact, the following explains the $a_{\rho\rho_0}$ arrows
in terms of $G$-clusters.
\begin{subprop}\label{prop:basis}
The $a_{\rho\rho_0}$ maps given by \eqref{eq:rho-rho0} form a basis of
$$\Hom(O_{E(\rho)}(1), O_{E(\rho)}(a_{\rho\rho_0})).$$
\end{subprop}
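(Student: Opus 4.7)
The plan is to reduce the statement to a dimension count together with showing that the induced linear map on global Hom spaces is an isomorphism, and then to recognise this map as a canonical nondegenerate pairing via the $\Phi_{O_\cZ}^*\dashv\Phi_{O_\cZ}$ adjunction and Serre duality.

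First I would check dimensions. Taking $G$-invariants extracts the $\rho_0$-isotypic summand, so by Proposition~\ref{prop:Phi(OErho(-1))}(2), $\dim(J(\rho)/\fm J(\rho))^G = a_{\rho\rho_0}$. Since $E(\rho) \simeq \bP^1$, we have $\dim\Hom(O_{E(\rho)}(1), O_{E(\rho)}(a_{\rho\rho_0})) = h^0(O_{E(\rho)}(a_{\rho\rho_0}-1)) = a_{\rho\rho_0}$. Hence the statement reduces to proving that the induced linear map
\[
\eta : (J(\rho)/\fm J(\rho))^G \longrightarrow H^0(O_{E(\rho)}(a_{\rho\rho_0}-1))
\]
is an isomorphism; by the equality of dimensions it suffices to show $\eta$ is injective.

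Next, I would identify both sides canonically. Using the adjunction $\Phi_{O_\cZ}^*\dashv\Phi_{O_\cZ}$ together with Corollary~\ref{cor:Phi(O0rho)}, and noting that $\Hom_{k[G]}(-, \rho_0)$ is the $k$-linear dual of $(-)^G$,
\[
(J(\rho)/\fm J(\rho))^G \simeq \Ext^1_{D^G(U)}(O_U\otimesk\rho/J(\rho), O_0\otimesk\rho_0)^{\vee} \simeq \Ext^2_{O_Y}(O_{E(\rho)}(-1), \omega_F)^{\vee}.
\]
Serre duality on $Y$ together with a direct degree computation (the contributions of $K_Y \cdot E(\rho)$ cancel between $K_Y|_{E(\rho)}$ and $\omega_F|_{E(\rho)}$) identifies the last group with $H^0(O_{E(\rho)}(a_{\rho\rho_0}-1))^{\vee}$, and so both source and target of $\eta$ become canonically $H^0(O_{E(\rho)}(a_{\rho\rho_0}-1))$. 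The plan is then to show that under these identifications $\eta$ is the identity.

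The crux of the proof, and the main obstacle, is matching the non-canonical lift $\tilde\alpha$ used to define \eqref{eq:rho-rho0} with the canonical adjunction counit. The lift $\tilde\alpha$ exists uniquely because of the vanishing $H^1(O_{E(\rho)}(-1) \otimes_{O_Y} \cI/\cI_{\rho,\rho_0}) = 0$, and to identify it with the counit I would pass to the relative integral transform $\Phi_{O_{E(\rho)}\otimesk O_\cZ}$ from \eqref{eq:Phi_S} and exploit its refined adjointness \eqref{eq:S-adjoint}. Rewriting $\tilde\alpha$ as the value of the counit of $\Phi_{O_{E(\rho)}\otimesk O_\cZ}^*\dashv\Phi_{O_{E(\rho)}\otimesk O_\cZ}$ on $O_{E(\rho)}(-1)$ and taking the $G$-invariant $\rho_0$-component, one recovers the identity map on $H^0(O_{E(\rho)}(a_{\rho\rho_0}-1))$ through the canonical chain of isomorphisms above, which completes the proof.
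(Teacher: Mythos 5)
Your overall strategy coincides with the paper's: reduce the statement to showing that the induced linear map $\eta\colon (J(\rho)/\fm J(\rho))^G \to \Hom(O_{E(\rho)}(1), O_{E(\rho)}(a_{\rho\rho_0}))$ between two spaces of the same dimension $a_{\rho\rho_0}$ is an isomorphism, and prove this by transporting the family version of \eqref{eq:rho-rho0} across the relative adjunction \eqref{eq:S-adjoint} until it becomes a manifestly nondegenerate evaluation pairing. Your dimension count and your chain of identifications $(J(\rho)/\fm J(\rho))^G \simeq \Ext^1(O_U\otimesk\rho/J(\rho), O_0\otimesk\rho_0)^{\vee} \simeq \Ext^2_{O_Y}(O_{E(\rho)}(-1),\omega_F)^{\vee} \simeq H^0(O_{E(\rho)}(a_{\rho\rho_0}-1))$ are all correct, and \eqref{eq:Phi_S} and \eqref{eq:S-adjoint} are indeed the right tools.

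The gap is in your final paragraph: the assertion that ``rewriting $\tilde{\alpha}$ as the value of the counit \dots one recovers the identity map'' \emph{is} the proposition, and you state it rather than prove it. A priori the $a_{\rho\rho_0}$ maps produced by the specific lift $\tilde{\alpha}$ could span a proper subspace, i.e.\ $\eta$ could have a kernel, and the dimension count cannot exclude this. The paper closes exactly this gap by an explicit diagram chase: it takes the $O_{E}$-dual of \eqref{eq:rho-rho0}; uses the two short exact sequences $0 \to O_E(1)\otimesk J(\rho) \to O_E(1)\otimesk(O_U\otimesk\rho) \to O_E(1)\otimesk(O_U\otimesk\rho/J(\rho)) \to 0$ and $0\to\cI|_{E\times U}\to O_{E\times U}\to O_{\cZ}|_{E\times U}\to 0$ to rewrite both $\chom$ sheaves as $\cext^1$ sheaves, so that the dual map is recognized as the family over $E$ of the maps \eqref{eq:extmap}; applies \eqref{eq:S-adjoint} to move this family to $E\times Y$, where it is visibly induced by the restriction $O_E(1)\otimesk O_E(-1)\to O_{\Delta_E}$; and dualizes back to identify \eqref{eq:rho-rho0} with the evaluation map $\Hom(\omega_F, O_E(-1)\otimes_{O_Y}K_Y)\otimesk O_E(1)\to O_E(-F)$, whose associated linear map is the canonical isomorphism. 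Some version of this computation is unavoidable; your proposal names the right ingredients but stops where the work begins. (One small point in your favour: the lift $\tilde{\alpha}$ is in fact unique, since $\cI/\cI_{\rho,\rho_0}$ is annihilated by $\fm$, so two lifts differ by an element of $\Hom_{O_E}(O_E(1),O_E(-F))\otimesk\Hom_{k[G]}(\rho,\rho_0)=0$; there is no genuine non-canonicality to worry about.)
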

\begin{proof}
Put $E=E(\rho)$.
The $O_E$-dual of \eqref{eq:rho-rho0} is
\begin{equation}\label{eq:dualrho-rho0}
\begin{aligned}
(\pi_E)_*\chom_{O_{E \times U}}&(\cI|_{E\times U}, O_{E \times 0} \otimesk \rho_0)^G\\
&\to
(\pi_E)_*\chom_{O_{E \times U}}(O_E(1) \otimesk J(\rho), O_{E \times 0}\otimesk \rho_0)^G,
\end{aligned}
\end{equation}
which is a map of sheaves on $E \simeq E\times 0$.
From the short exact sequence
$$
0 \to O_E(1) \otimesk J(\rho) \to O_E(1) \otimesk (O_U \otimesk \rho) \to  O_E(1) \otimesk(O_U\otimesk \rho/J(\rho)) \to 0
$$
we derive an isomorphism 
\begin{align*}
\chom_{O_{E\times U}}(O_E(1) \otimesk &J(\rho), O_{E \times 0}\otimesk \rho_0) \\
\simeq \cext^1_{O_{E\times U}}&(O_E(1) \otimesk (O_Y\otimesk \rho/J(\rho)), O_{E \times 0} \otimesk \rho_0),
\end{align*}
while from the exact sequence
$$
0 \to \cI|_{E\times U} \to O_{E\times U} \to O_{\cZ}|_{E\times U} \to 0
$$
we derive an isomorphism
\begin{align*}
\chom_{O_{E \times U}}&(\cI|_{E\times U}, O_{E \times 0} \otimesk \rho_0)\\ 
&\simeq
\cext^1_{O_{E \times U}}(O_{\cZ}|_{E \times U}, O_{E \times 0}\otimesk \rho_0).
\end{align*}
Hence \eqref{eq:dualrho-rho0} is isomorphic to
\begin{equation}\label{eq:dual2}
\begin{aligned}
(\pi_{E})_* &\cext^1_{O_{E \times U}}(O_{\cZ}|_{E \times U}, O_{E \times 0}\otimesk \rho_0)^G\\
&\to
(\pi_{E})_*\cext^1_{O_{E\times U}}(O_E(1) \otimesk (O_Y\otimesk \rho/J(\rho)), O_{E \times 0} \otimesk \rho_0)^G
\end{aligned}
\end{equation}
(this is a family of maps \eqref{eq:extmap} parametrized by $y \in E$).
By  \eqref{eq:S-adjoint}, the map \eqref{eq:dual2} is isomorphic to
\begin{equation}\label{eq:extfam2}
\begin{aligned}
(\pi_E)_*&\cext^2_{O_{E \times Y}}(O_{\Delta_E}, O_{E}\otimesk \omega_F)\\
&\to
(\pi_E)_*\cext^2_{O_{E\times Y}}(O_E(1) \otimesk O_E(-1), O_E\otimes \omega_F)
\end{aligned}
\end{equation}
which is a family of maps \eqref{eq:extmap2}.
Taking the dual of \eqref{eq:extfam2}, we see that \eqref{eq:rho-rho0} is isomorphic to
\begin{equation}\label{eq:extfam3}
\begin{aligned}
(\pi_E)_*\chom_{O_{E\times Y}}&(O_E \otimesk \omega_F, O_E(1) \otimesk (O_E(-1) \otimes_{O_Y} K_Y)\\
& \to
(\pi_E)_*\chom_{O_{E\times Y}}(O_E \otimesk \omega_F, O_{\Delta_E}\otimes K_Y)
\end{aligned}
\end{equation}
which is determined by the restriction map
$$
O_E(1) \otimesk O_E(-1) \to (O_E(1) \otimesk O_E(-1))|_{\Delta_E} \simeq O_E.
$$
Since $\omega_F \simeq K_Y(F)|_F$, \eqref{eq:extfam3} is isomorphic to
the evaluation map
$$
\Hom(O_F(-1), O_E(-F))\otimesk O_E(1) \to O_E(-F)
$$
and we are done.
\end{proof}

\section{Examples}
\subsection{The cyclic group 
$G=\left\langle\frac{1}{12}(1,5)\right\rangle$}
Consider the case of a cyclic group
$$G=\left\langle\frac{1}{12}(1,5)\right\rangle=
\left\langle
\begin{pmatrix} \zeta_{12} & 0 \\ 0 & \zeta_{12}^5 
\end{pmatrix}
\right\rangle,$$
where $\zeta_{12}$ is the primitive $12$-th root of unity. 
Let $g=\diag(\zeta_{12},\zeta_{12}^5)$ be a generator of $G$, and 
$\rho_i\in\Irr(G)=\Hom_{\bZ}(G,\bG_m)$ for $i\pmod{12}$
such that $\rho_i(g)=\zeta_{12}^{-i}$. 
Let $S_1$ be the $G$-submodule 
spanned by $x$ and $y$.  The group $G$ acts 
on $U$ diagonally so that 
$S_1\simeq\rho_1\oplus\rho_5$ as $k[G]$-modules, the $\rho_1$-part  
(resp. $\rho_5$-part) being spanned by $x$ (resp. $y$). 
The $k[G]$-module $k[x,y]$ is decomposed into irreducible pieces, each spanned 
by a single monomial. 
Associated to every monomial 
$x^ay^b$ in Figure \ref{fig:monomials weights}, 
we have a representation $\rho_i$ with
$i=a+5b \pmod{12}$ in the same Figure, which is quoted as $i$ instead of $\rho_i$ for short. Figure \ref{fig:monomials weights} 
can be seen as (a covering of) the McKay quiver, where the arrow $i \to i+1 \pmod{12}$ (resp. $i \to i+5 \pmod{12}$) 
corresponds to multiplication by $x$ (resp. $y$).

\begin{figure}\setlength{\unitlength}{2.5pt}
\centering
\hbox{
\hskip 0.7cm\begin{picture}(60,60)(-3,0)
\put(0,0){$1$}
\put(10,0){$x$}
\put(20,0){$x^2$}
\put(30,0){$x^3$}
\put(40,0){$x^4$}
\put(50,0){$\cdots$}
\put(0,10){$y$}
\put(9,10){$xy$}
\put(19,10){$x^2y$}
\put(29,10){$x^3y$}
\put(39,10){$x^4y$}
\put(50,10){$\cdots$}
\put(0,20){$y^2$}
\put(9,20){$xy^2$}
\put(19,20){$x^2y^2$}
\put(29,20){$x^3y^3$}
\put(39,20){$x^4y^3$}
\put(50,20){$\cdots$}
\put(0,30){$y^3$}
\put(9,30){$xy^3$}
\put(19,30){$x^2y^3$}
\put(29,30){$x^3y^3$}
\put(39,30){$x^4y^3$}
\put(50,30){$\cdots$}
\put(0,40){$y^4$}
\put(9,40){$xy^4$}
\put(19,40){$x^2y^4$}
\put(29,40){$x^3y^4$}
\put(39,40){$x^4y^4$}
\put(50,40){$\cdots$}
\put(0,50){$\vdots$}
\put(10,50){$\vdots$}
\put(20,50){$\vdots$}
\put(30,50){$\vdots$}
\put(40,50){$\vdots$}
\end{picture}
\qquad
\begin{picture}(60,60)(0,0)
\put(0,0){0}
\put(10,0){1}
\put(20,0){2}
\put(30,0){3}
\put(40,0){4}
\put(50,0){$\cdots$}
\put(0,10){5}
\put(10,10){6}
\put(20,10){7}
\put(30,10){8}
\put(40,10){9}
\put(50,10){$\cdots$}
\put(-1,20){10}
\put(9,20){11}
\put(20,20){0}
\put(30,20){1}
\put(40,20){2}
\put(50,20){$\cdots$}
\put(0,30){3}
\put(10,30){4}
\put(20,30){5}
\put(30,30){6}
\put(40,30){7}
\put(50,30){$\cdots$}
\put(0,40){8}
\put(10,40){9}
\put(19,40){10}
\put(29,40){11}
\put(40,40){0}
\put(50,40){$\cdots$}
\put(0,50){$\vdots$}
\put(10,50){$\vdots$}
\put(20,50){$\vdots$}
\put(30,50){$\vdots$}
\put(40,50){$\vdots$}
\end{picture}
}
\caption{monomials and weights}\label{fig:monomials weights}
\end{figure}

The special representations 
of $G$ are $\rho_0$, $\rho_1$, $\rho_3$ and $\rho_5$,
which are determined by the continued fraction expansion 
of $\frac{12}{5}$ (\cite{Wunram88}). 
The modules $O_U\otimesk \rho_i/J(\rho_i)$ 
and the generators of $J(\rho_i)$ for the special representations 
are given in Figure~\ref{fig:J0135}.
The framed monomials in those Figures 
form a basis of 
the vector space $O_U\otimesk \rho_i/J(\rho_i)$
and the monomials outside the frame are 
minimal generators of $J(\rho_i)$.

If we look at Figure \ref{fig:J0135}, 
then we find three paths in the covering 
of the McKay quiver from the vertex $\rho_1$, the generator of
$O_U \otimesk \rho_1$, to two $\rho_0$'s 
outside of $O_U\otimesk \rho_i/J(\rho_i)$.
The two paths from $\rho_1$ to $\rho_0$ in the McKay 
quiver corresponds to $\phi^1_{1,0}$ and $\phi^2_{1,0}$ in Subsection 
\ref{subsec:reconst quiver}, which determine the two arrows
from $\rho_1$ to $\rho_0$ in the special McKay quiver $Q_X$
associated to the reconstruction algebra.
From Figure~\ref{fig:J0135} we can read 
the matrix $(a_{\rho_i\rho_j})$ as follows:
\begin{equation*}
(a_{\rho_i\rho_j})_{i,j=0,1,3,5}=
\begin{pmatrix}
0&1&0&1\\
2&0&1&0\\
0&1&0&1\\
2&0&1&0
\end{pmatrix},
\end{equation*}which determines the special McKay quiver $Q_X$ in an obvious way.

\begin{figure}[h]
\setlength{\unitlength}{1.5pt}
\begin{centering}
\hbox{\hskip 2cm 
\begin{picture}(20,20)(0,0)
\put(0,0){0}
\put(0,10){5}
\put(10,0){1}
\put(-3,-3){
\put(0,0){\line(1,0){10}}
\put(10,0){\line(0,1){10}}
\put(10,10){\line(-1,0){10}}
\put(0,10){\line(0,-1){10}}
\put(-5,-15){$k=0$}
}
\end{picture}
\quad 
\begin{picture}(30,80)(0,0)
\put(0,0){1}
\put(10,0){2}
\put(20,0){3}
\put(0,10){6}
\put(10,10){7}
\put(-1,20){11}
\put(10,20){0}
\put(0,30){4}
\put(0,40){9}
\put(0,50){2}
\put(0,60){7}
\put(0,70){0}
\put(-3,-3){
\put(0,0){\line(0,1){70}}
\put(0,0){\line(1,0){20}}
\put(0,20){\line(0,1){20}}
\put(0,70){\line(1,0){10}}
\put(10,20){\line(0,1){50}}
\put(10,20){\line(1,0){10}}
\put(20,0){\line(0,1){20}}
}
\put(-2,-18){$k=1$}
\end{picture}
\quad
\begin{picture}(30, 30)(0, 0)
\put(0,0){3}
\put(10,0){4}
\put(20,0){5}
\put(0,10){8}
\put(10,10){9}
\put(0,20){1}
\put(-3,-3){
\put(0,0){\line(0,1){20}}
\put(0,20){\line(1,0){20}}
\put(20,20){\line(0,-1){20}}
\put(20,0){\line(-1,0){20}}
\put(0,-15){$k=3$}
}
\end{picture}
\quad
\begin{picture}(80, 30)(0,0)
\put(0,0){5}
\put(10,0){6}
\put(20,0){7}
\put(30,0){8}
\put(40,0){9}
\put(49,0){10}
\put(59,0){11}
\put(70,0){0}
\put(-1,10){10}
\put(9,10){11}
\put(20,10){0}
\put(0,20){3}
\put(-3,-3){
\put(0,0){\line(1,0){70}}
\put(70,0){\line(0,1){10}}
\put(70,10){\line(-1,0){50}}
\put(20,10){\line(0,1){10}}
\put(20,20){\line(-1,0){20}}
\put(0,20){\line(0,-1){20}}
\put(25,-15){$k=5$}
}
\end{picture}
}
\vskip 1cm 
\caption{$O_U\otimesk \rho_k/J(\rho_k)$}
\label{fig:J0135}
\end{centering}
\end{figure}
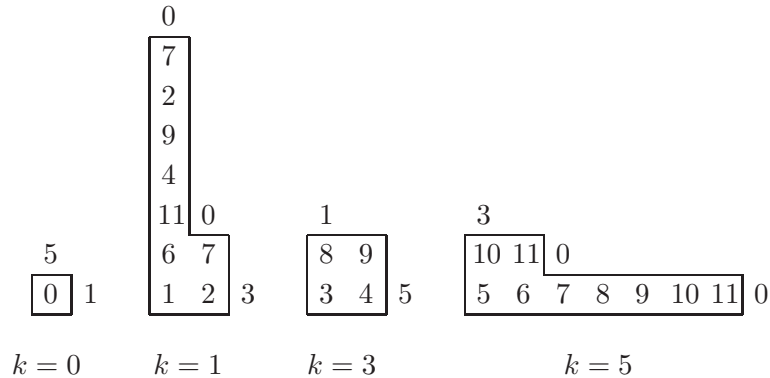

Now we explain Theorem~\ref{thm:cluster-quiver} in the case of this example.
Suppose $E(\rho_3) \cap E(\rho_5)=\{y\}$.
The structure sheaf $O_{Z_y}$ of the $G$-cluster $Z_y$ 
has a $k$-basis consisting of 12 monomials framed in  Figure \ref{fig:OZyy'}.
We note $O_{Z_y}\simeq k[G]\simeq\oplus_{i=0}^{11}\rho_i$.
Let $V(\rho_i)$ be the $\rho_i$-part of $O_{Z_y}$ for $i=3, 5$.
By a direct computation, we see that the ideal $O_UV(\rho_i)$ of $O_{Z_y}$ 
contains no other special representation for $i=3,5$ as in Theorem \ref{thm:socle-special}.
Moreover, $W(\rho_3)$, $W(\rho_0)$ and $W(\rho_5)$ are located 
at the corners of the complement
of the framed part in Figure \ref{fig:OZyy'}.
Then it is easy to observe that
$(\wV(\rho_3) O_U+\fm I_y)/\fm I_y$ contains $W(\rho_5)$
and $(\wV(\rho_5)O_U + \fm I_y)/\fm I_y$ contains $W(\rho_3)$
as in Theorem \ref{thm:cluster-quiver} (1). 
\par
Since $y\in E(\rho_5)$, we have 
$(\wV(\rho_5) O_U+\fm I_y)/\fm I_y$ 
contains $W(\rho_0)$, which gives rise to   
one path of the special McKay quiver from $\rho_5$ to $\rho_0$, that is, an $O_U$-$G$-homomorphism from 
from $O_U\otimesk\rho_5$ to $O_U\otimesk\rho_0$. 
The point $y'$ corresponding to $Z_{y'}$ in Figure \ref{fig:OZyy'} 
is another point of $E(\rho_5)$, 
which gives rise to another path from $\rho_5$ to $\rho_0$.
These two paths are precisely 
the two paths in Figure \ref{fig:J0135} corresponding to
$\phi_{5,0}^1$ and $\phi_{5,0}^2$.
This explains Theorem \ref{thm:cluster-quiver} (2) and 
Proposition~\ref{prop:basis}.\par
Finally, Figure \ref{fig:J0135} shows that 
there is an arrow from $\rho_0$ to $\rho_1$ (resp.  from $\rho_0$ to $\rho_5$).
No arrow from $\rho_0$ to $\rho_5$ 
appears at $y' \in E(\rho_5)$, while an arrow from $\rho_0$ to $\rho_5$ 
does appear at $y \in E(\rho_5)$,
as in the third case in Theorem \ref{thm:cluster-quiver} (3a).
The case $\sigma=\rho_1$ is similar.
If $\sigma \ne \rho_1, \rho_5$,  then 
the first case of Theorem \ref{thm:cluster-quiver} (3a) occurs.

\begin{figure}
\setlength{\unitlength}{1.5pt}
\begin{centering}
\hbox{\hskip 1.5cm
\begin{picture}(60,50)(0,0)
\put(0,0){0}
\put(10,0){1}
\put(20,0){2}
\put(30,0){3}
\put(40,0){4}
\put(50,0){5}
\put(0,10){5}
\put(10,10){6}
\put(20,10){7}
\put(30,10){8}
\put(40,10){9}
\put(-1,20){10}
\put(9,20){11}
\put(20,20){0}
\put(0,30){3}
\put(-3,-3){
\put(0,1){\line(1,0){50}}
\put(50,1){\line(0,1){20}}
\put(50,21){\line(-1,0){28}}
\put(22,21){\line(0,1){10}}
\put(22,31){\line(-1,0){22}}
\put(0,31){\line(0,-1){30}}
\put(22,-15){$O_{Z_y}$}}
\end{picture}
\qquad
\begin{picture}(60,60)(0,0)
\put(0,0){0}
\put(10,0){1}
\put(20,0){2}
\put(30,0){3}
\put(40,0){4}
\put(50,0){5}
\put(60,0){6}
\put(70,0){7}
\put(80,0){8}
\put(90,0){9}
\put(99,0){10}
\put(109,0){11}
\put(120,0){0}
\put(0,10){5}
\put(-3,-3){
\put(0,1){\line(1,0){122}}
\put(122,1){\line(0,1){10}}
\put(122,11){\line(-1,0){121}}
\put(0,11){\line(0,-1){10}}
\put(50,-15){$O_{Z_{y'}}$}}
\end{picture}
}
\vskip 1cm 
\caption{$O_{Z_{y}}$ and $O_{Z_{y'}}$}
\label{fig:OZyy'}
\end{centering}
\end{figure}
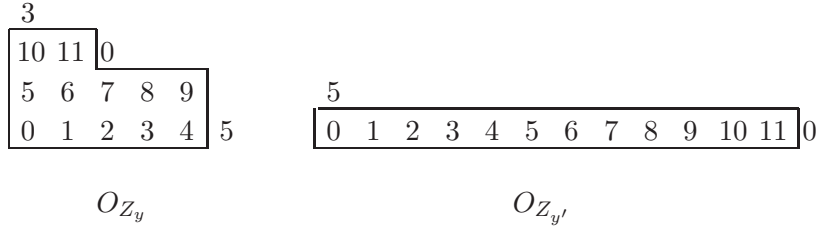

\subsection{The binary dihedral case $D_5$}
\label{subsec:dihedral case}
Let $S=k[x, y]$ and $U=\bA^2_k=\Spec S$. 
The simple singularity $D_5$ is the quotient 
singularity of $U$ by the
binary dihedral group $G:=\bD_3$ of order 12, which 
is generated by $\sigma$ and $\tau$:
 \[
 \sigma=\begin{pmatrix} \epsilon& 0\\ 0& \epsilon^{-1} \end{pmatrix},\quad
 \tau=\begin{pmatrix} 0& 1\\ -1& 0 \end{pmatrix},
 \]
where $\epsilon:=e^{2\pi i/6}$. We have $\sigma^6=\tau^4=1$,
$\sigma^3=\tau^2$ and $\tau\sigma\tau^{-1}=\sigma^{-1}$.  
The group $G$ acts on $U$  from the right by 
$(x,y)\mapsto (x,y)g$ for $g\in G$, hence,   
$\tau$ acts on the ring $S$ is 
defined to be 
$\tau(x)=-y$ and $\tau(y)=x$.  The ring of
$G$-invariants in $S$ is generated by three elements
$$A_6:=x^6+y^6,\ A_8:=xy(x^6-y^6),\ A_4:=x^2y^2.$$ The quotient
$U/G$ is isomorphic to the hypersurface $4A_4^4+A_8^2-A_4A_6^2=0$.

We quote some of the data from \cite[p.~200]{ItoNakamura99}.
By \cite[Table~8, p.~200]{ItoNakamura99}, 
The list of possible generators of $\Gen(I_y)$ $(y\in E)$ is given by    
\begin{align*}
V_4(\rho_0)\oplus V_6(\rho_0)&=\{x^2y^2\}\oplus\{x^6+y^6\},\\
V_2(\rho_1)\oplus V_6(\rho_1)&=\{xy\}\oplus\{x^6-y^6\},\\
V_3(\rho_2)\oplus V_5(\rho_2)&=\{x^2y, -xy^2\}\oplus\{y^5, -x^5\},\\
V_4(\rho_3)&=\{y^4, x^4\}\oplus\{x^3y, -xy^3\},\\
V_3(\rho_4)\oplus V_5(\rho_4)&=\{x^3+ iy^3\}
\oplus\{xy(x^3- iy^3)\},\\
V_3(\rho_5)\oplus V_5(\rho_5)&=\{x^3- iy^3\}
\oplus\{xy(x^3+ iy^3)\}.
\end{align*}
 
Let $S_1$ be the space spanned by $x$ and $y$.
Then multiplication by any element of $S_1$ 
defines an endomorphism of $\Coinv(D_5)$, 
which induces a homomorphism 
from an irreducible factor of $\Coinv(D_5)$
to another irreducible factor. 
Let $y\in F\subset \GHilb(U)$. Let 
$\rho\simeq V(\rho)\subset \Soc(O_{Z_y})$ and 
$\rho'\simeq W(\rho')\subset\Gen(I_y)$. 
If $W(\rho')\subset S_1V(\rho)$, 
we draw a directed arrow from $V(\rho)$ to $W(\rho')$. 
As was explained in \cite[p.~201]{ItoNakamura99}, if we consider  
all pairs $\rho,\sigma\in\Irr(G)$ nontrivial, 
 this gives the Dynkin diagram ${D}_5$ 
with pairs of directed arrows 
in opposite direction as part of   
Figure~\ref{fig:extended quiver D5}. 
See \cite[p.~201]{ItoNakamura99} and \cite{Nakamura09} for details. \par

\begin{figure}[h] 
\setlength{\unitlength}{1.2pt}%
\begin{centering}
\begin{picture}(120,60)(15,25)
\put(-7,60){$V_4(\rho_0)$}
\put(-0,50){$\oplus$}
\put(-7,40){$V_6(\rho_0)$}
\put(-7,20){$V_2(\rho_1)$}
\put(-0,10){$\oplus$}
\put(-7,0){$V_6(\rho_1)$}
\put(42,40){$V_3(\rho_2)$}
\put(53,30){$\oplus$}
\put(42,20){$V_5(\rho_2)$}
\put(92,30){$V_4(\rho_3)$}
\put(141,60){$V_3(\rho_4)$}
\put(148,50){$\oplus$}
\put(141,40){$V_5(\rho_4)$}
\put(141,20){$V_3(\rho_5)$}
\put(148,10){$\oplus$}
\put(141,0){$V_5(\rho_5)$}
 \thicklines
 \put(39,42){\vector(-3,1){15}}
 \put(24,23){\vector(3,1){14}}
 \put(39,23){\vector(-3,-1){15}}
 \put(70,34){\vector(1,0){17}}
 \put(87,30){\vector(-1,0){17}}
 \put(133,45){\vector(-3,-1){15}}
 \put(119,36){\vector(3,1){14}}
 \put(133,23){\vector(-3,1){14}}
 \put(118,23){\vector(3,-1){15}}
\end{picture}
\end{centering}
\vspace{40pt}
\caption{Directed arrows of ${\wD}_5$}
\label{fig:extended quiver D5}
\end{figure}

In what follows, we explain what happens at the vertex 
$\rho_0$ of the extended Dynkin diagram $\wD_5$. 
The result looks somewhat novel.  
It is slightly different 
from \cite[p. 277 fifth line from below; p.~278, Fig.~6
\footnote{The arrow from $V_4(\rho_0)$ to $V_3(\rho_2)$ 
in \cite[Fig.~6]{Nakamura09} has to be reversed. 
See \cite[Fig.~14]{Nakamura09}.}]{Nakamura09}. 
\par
Let $V^{\sharp}(\rho_0)=V_4(\rho_0)\oplus V_6(\rho_0)$, and  
$V^{\sharp}(\rho_2)=V_4(\rho_0)\oplus V_6(\rho_0)$. Recall 
$E(\rho_2)=\bP(V^{\sharp}(\rho_2))$. Let 
$W\ (\simeq\rho_2)\subset V^{\sharp}(\rho_2)$, 
and $x=I_x:=I(W)$ any point of $E(\rho_2)$. Then 
\begin{align*}
\Soc(O_{Z_x})[\rho_2]&\simeq V_3(\rho_2)\oplus V_5(\rho_2)/W,\\
\left(S_1\cdot\left(\Soc(O_{Z_x})[\rho_2]\right)\right)[\rho_0]&
\simeq \left(S_1V_3(\rho_2)\oplus 
S_1V_5(\rho_2)\right)[\rho_0]/(S_1W)[\rho_0],\\
&\simeq V^{\sharp}/V^{\sharp}\cap (S_1W)\simeq\Gen(I_x)[\rho_0],
\end{align*}where $\Soc(O_{Z_x})=[I(W):\fm]/\fm I(W)$, 
and $S_1\Soc(O_{Z_x})=S_1([I_x:\fm]/I_x)
\subset I_x/\fm I_x=\Gen(I_x)$. 
This defines an isomorphism from $E(\rho_2)$ 
to 
$\bP(V^{\sharp}(\rho_0))$ sending $W$ to  
$V^{\sharp}/V^{\sharp}\cap (S_1W)$.
Note that there is one more arrow from $\rho_0$ to $\rho_2$
in the McKay quiver, which is understood as an arrow from
$V_0(\rho_0)$ to $V_1(\rho_2)$ as in Theorem \ref{thm:cluster-quiver} (3b).

\begin{subrem}For every other rational double singularity
A similar structure of the coinvariant algebra 
is observed by using Subsection~\ref{subsec:socle OU/mIy} and 
\cite[Tables~7, 10, 13, 17]{ItoNakamura99}. 
See also \cite[p.~289, Fig.~14]{Nakamura09}. 
In the $D_4$ case, the subspace 
$V'_4(\rho_0)=\{x^2y^2\}$ (resp. $V''_4(\rho_0)=\{x^4+y^4\}$ 
plays the same role as $V_4(\rho_0)$ (resp. $V_6(\rho_0)$) of $D_5$. 
In the $A_n$ case $(n\geq 2)$, there is an arrow 
$\rho_1$ to $\rho_0$ (resp.  from $\rho_n$ to $\rho_0$) 
as above, while the other pairs 
are opposite. 
\end{subrem}

\begin{subrem}In the $A_1$ case,
 there is only one arrow from $\rho_1$ to $\rho_0$. 
Indeed, let $S_k$ be the space of polynomials of degree $k$ $(k=1,2)$. 
Then $E(\rho_1)=\bP(S_1)$. Let $x=I(W)$ be any point of $E(\rho_1)$. 
Then $I_x=O_UW$, $\Soc(O_{Z_x})=S_1/W$. Hence 
we have a morphism $f$ from $\bP(S_1)$ to $\bP(S_2)$ sending $W$ to 
$S_2/S_1W=S_1(S_1/W)$. This induces an isomorphism from $\bP(S_1)$ to a nonsingular conic of $\bP(S_2)$. 
\end{subrem}



\end{document}